\numberwithin{equation}{section}
\theoremstyle{plain}
\newtheorem{theorem}{Theorem}[section]
\newtheorem*{definition}{Definition}
\newtheorem{lemma}[theorem]{Lemma}
\theoremstyle{remark}
\newtheorem*{remark}{Remark}
\newcommand{\abs}[1]{\left| #1 \right|}
\newcommand{\norme}[1]{\left\| #1 \right\|}
\newcommand{\N}{\mathbb{N}}
\newcommand{\Z}{\mathbb{Z}}
\newcommand{\Proba}{\mathbb{P}}
\newcommand{\Ent}[1]{\left\lfloor #1\right\rfloor}
\newcommand{\Ceil}[1]{\left\lceil #1\right\rceil}
\newcommand{\demi}{\frac{1}{2}}
\newcommand{\eqninfty}{\stackrel{n\to\infty}{\sim}}
\newcommand\numberthis{\addtocounter{equation}{1}\tag{\theequation}}
\newcommand{\diam}{\mathrm{diam}}
\newcommand{\quadet}{\quad\text{and}\quad}
\newcommand{\qquadet}{\qquad\text{and}\qquad}
\newcommand{\quadou}{\quad\text{where}\quad}
\newcommand{\qquadou}{\qquad\text{where}\qquad}
\newcommand{\quador}{\quad\text{or}\quad}
\newcommand{\quadavec}{\quad\text{with}\quad}
\newcommand{\qquadavec}{\qquad\text{with}\qquad}
\newcommand{\guillemets}[1]{``#1''}
\newcommand{\calB}{\mathcal{B}}
\newcommand{\calC}{\mathcal{C}}
\newcommand{\calE}{\mathcal{E}}
\newcommand{\calF}{\mathcal{F}}
\newcommand{\calH}{\mathcal{H}}
\newcommand{\calJ}{\mathcal{J}}
\newcommand{\calL}{\mathcal{L}}
\newcommand{\calM}{\mathcal{M}}
\newcommand{\calN}{\mathcal{N}}
\newcommand{\calO}{\mathcal{O}}
\newcommand{\calP}{\mathcal{P}}
\newcommand{\calR}{\mathcal{R}}
\newcommand{\calS}{\mathcal{S}}
\newcommand{\calT}{\mathcal{T}}
\newcommand{\calY}{\mathcal{Y}}
\newcommand{\bE}{\mathbb{E}}
\newcommand{\s}{\mathfrak{s}}
\newcommand{\PlA}{\Proba}
\newcommand{\Pml}{\calP_{\mu}^{\lambda}}
\newcommand{\Esp}{\mathbb{E}}
\newcommand{\ElA}{\Esp}
\newcommand{\torus}{{\Z_n^d}}
\newcommand{\dist}{x^\star}
\newcommand{\reff}[1]{(\ref{#1})}
\renewcommand{\epsilon}{\varepsilon}
\date{\today}
\begin{document}

\begin{frontmatter}

\title{The Critical Density for Activated Random Walks is always less than 1}
\runtitle{Critical Density for ARW}

\begin{aug}
\author[A]{\fnms{Amine}~\snm{Asselah}\ead[label=e1]{amine.asselah@u-pec.fr}},
\author[B]{\fnms{Nicolas}~\snm{Forien}\ead[label=e2]{nicolas.forien@uniroma1.it}}
\and
\author[C]{\fnms{Alexandre}~\snm{Gaudilli\`ere}\ead[label=e3]{alexandre.gaudilliere@math.cnrs.fr}}
\address[A]{LAMA, UPEC, UPEM, CNRS, F-94010, Cr\'eteil, France and NYU Shanghai\printead[presep={,\ }]{e1}}

\address[B]{Sapienza Universit\`a di Roma, Dipartimento di Matematica, Roma, Italy\printead[presep={,\ }]{e2}}

\address[C]{Aix Marseille Univ, CNRS, I2M, Marseille, France
\printead[presep={,\ }]{e3}}
\end{aug}

\begin{abstract}
{\it Activated Random Walks}, on~$\Z^d$ for any~$d\geqslant 1$, 
is an interacting particle system, where particles can be in
either of two states: active or frozen. Each active particle
performs a continuous-time simple random walk
during an exponential time of parameter~$\lambda$, after which
it stays still in the frozen state, until another
active particle shares its location, and turns it instantaneously 
back into activity. This model is known to have a phase transition,
and we show that the critical density,
controlling the phase transition, is less than one in any
dimension and for any value of the sleep rate~$\lambda$. We provide upper bounds for the critical density
in both the small~$\lambda$ and large~$\lambda$ regimes.
\par\noindent \emph{Keywords and phrases.} Activated random walks, phase transition, self-organized criticality.
\par\noindent MSC 2020 \emph{subject classifications.} 60K35, 82B26.
\vspace{0.2cm}
\end{abstract}

\begin{keyword}[class=MSC]
\kwd[Primary ]{60K35}
\kwd{82B26}
\end{keyword}

\begin{keyword}
\kwd{Activated random walks}
\kwd{Self-organized criticality}
\end{keyword}

\end{frontmatter}

\section{Model and results}
\label{sectionIntro}

\subsection{Activated Random Walks}

This paper is a companion to~\cite{FG22}.
We continue our study of a specific reaction-diffusion model 
known as {\it Activated Random Walks} (ARW) invented to study 
{\it self-organized criticality}.
Informally, random walks diffuse on a graph which has a tendency to hinder
the motion of lonely walkers, whereas the vicinity of other diffusions turns
hindered particles into diffusive ones. 
Here, we consider
the Euclidean lattice~$\Z^d$ in any dimension, or a large Euclidean torus.
The initial configuration is
an independent Poisson number of {\it particles} at each site
of~$\Z^d$, with parameter~$\mu<1$. Each particle can be in any
of two states: {\it active} or {\it frozen} (or {\it sleeping}).
Each active particle performs a continuous-time simple random walk with rate~$1$ and is equipped
with an independent exponential clock of parameter~$\lambda$,
at the marks of which the particle changes state, and
stops moving. When a frozen particle shares a site with
another particle, it gets instantly activated.
When the graph is an infinite Euclidean lattice,
and one increases the initial density of active particles,
one expects to see a transition,
at a critical density~$\mu_c(\lambda)$,
from a regime of {\it low density}
where particles are still
to a {\it high density} regime
of configurations made of constantly evolving islands
of sleeping particles at low density
in a sea of diffusing particles at high density.
When we start with a large number of active particles
at the origin we expect a large ball to be eventually covered 
at density~$\mu_c(\lambda)$.
This phenomenon is known as {\it self-organized criticality}, in the
sense that the system alone reaches a critical state.
This notion was introduced in the eighties
by Bak, Tang and Wiesenfeld~\cite{BTW87}
together with a related toy model, {\it the abelian sandpile}.
The ARW model, which is less constrained,
was actually popularized
some 13 years ago by our late friend Vladas Sidoravicius
and we refer to Levine and Liang~\cite{LL21} 
for some comparison between the two models.
The ARW model shares with the abelian sandpile the nice feature
that the order in which the particles are launched is irrelevant,
which is known as the abelian property.

When working with~$\Z^d$, and when active particles are drawn from a 
product Poisson measure of intensity~$\mu$ at each site, 
ARW is known to have
a phase transition between an {\it active phase} and a
{\it frozen phase}. The active phase is characterized by
every vertex being visited infinitely many times, whereas in the frozen phase the origin is visited a finite number of times.
In a seminal work Rolla and Sidoravicius~\cite{RS12}
prove that the system stays active
forever with a probability which is increasing in~$\mu$, and which
satisfies a zero-one law under~$\Pml$, law of the process when the sleep rate is~$\lambda$ and
the initial configuration is drawn from 
the product Poisson measure of intensity~$\mu$. Thus, the
following density~$\mu_c(\lambda)$ is well defined:
$$\mu_c(\lambda)
\ =\ \inf\,\Big\{\,\mu\,:\,\Pml(\text{the origin is visited a finite number of times})=0\,\Big\}\,.$$
In~\cite{RSZ19} Rolla, Sidoravicius and Zindy show that~$\mu_c(\lambda)$ 
is the same number when the initial configuration is drawn from any
translation-invariant ergodic measure with mean~$\mu$.
On~$\Z^d$ with~\smash{$d\geqslant 3$}, Stauffer and Taggi in~\cite{ST18}
show that when~$\lambda$ is small,~$\mu_c(\lambda)<1$, and they provide a general lower bound~$\mu_c(\lambda)\geqslant\lambda/(1+\lambda)$, which is valid in any dimension and for every~\smash{$\lambda>0$}.
In a subsequent work~\cite{Taggi19} Taggi shows 
that~$\mu_c(\lambda)<1$ for all~$\lambda\in (0,\infty)$ on~$\Z^d$ with~\smash{$d\geqslant 3$}, and provides an upper bound on the critical density, showing that~$\mu_c(\lambda)\leqslant C_d\sqrt{\lambda}$ for every~$\lambda>0$, for some positive constant~$C_d$.

Even in dimension one, ARW is far from trivial.
In~$d=1$, Basu, Ganguly and Hoffman introduce in~\cite{BGH18}
a {\it block dynamics} allowing them to replace the complex correlation of
the odometer function
(measuring the number of instructions used at each site) by some
balance equations at the end-points of their blocks (where particles
leave) and at the centers (where particles arrive).
They obtain that~$\mu_c(\lambda)<1$ for small~$\lambda$. 
Following the same approach, Asselah, Rolla and Schapira 
show in~\cite{ARS19}
that~$\mu_c(\lambda)=\calO(\sqrt \lambda)$ for small~$\lambda$,
and Hoffman, Richey and Rolla show in~\cite{HRR20} that,
for any~$\lambda$,~$\mu_c(\lambda)<1$ in dimension 1. 
Finally, let us mention that a lot of works have
considered asymmetric random walks, where~$\mu_c(\lambda)<1$ has
been settled. In dimension~$2$, two independent recent works by 
Forien and Gaudilli\`ere~\cite{FG22} on the one hand, and Y.\ Hu~\cite{Hu22}
on the other hand, have established that~$\mu_c(\lambda)<1$ when~$\lambda$ is small enough.

The family of ARW models
is very rich as we vary~$\lambda$ from~$0$ to~$\infty$,
and as we vary the initial conditions with active and frozen
particles. Let us illustrate this with two examples.
When~$\lambda=0$, the
active particles stay alive forever: if we start with a product
Poisson distribution of sleeping particles and one active
particle at the origin, the model is known as the frog model.
Kesten and Sidoravicius have also studied a model for the propagation of an infection, where the
sleeping particles can move at a slower rate than the active ones.
When~$\lambda=\infty$,
and we send active particles from the origin, then this is the
celebrated model called internal 
diffusion limited aggregation (IDLA). 
This latter model is much older, and in a sense much simpler since
frozen particles remain so forever. It has been thoroughly studied,
and a shape theorem has been obtained in the nineties by
\cite{LBG92} on~$\Z^d$, and on a few other interesting
graphs, as well as for closely related
variants: uniform IDLA~\cite{BDKL20}, 
Hasting-Levitov dynamics~\cite{NST21},
rotor-router~\cite{LP09} and
divisible sandpiles. Besides, fluctuations around the typical shape have
been obtained on~$\Z^d$ independently by Asselah and Gaudilli\`ere
in~\cite{AG13} and by Jerison, Levine and Sheffield in~\cite{JLS12}.
It remains the focus of recent interest~\cite{BG22}.

We show in this paper that when we start with particles which are all active
on the torus~\smash{$\torus:=(\Z/n\Z)^d$}, for any dimension~$d$ and 
$\lambda>0$, there is a density~$\mu<1$, independent of~$n$, 
above which the system remains active during an exponentially large
time (in~$|\torus|$) with overwhelming probability.
This implies that there exists a non-trivial active phase
on the infinite Euclidean lattice
for all sleep rates~$\lambda$.
In other words,
when we start the system on~$\Z^d$ with active particles distributed as a product
measure with more than~$\mu_c(\lambda) < 1$ 
particles per site on average,
then the origin is almost surely visited infinitely many times.

\subsection{Main results}

Our main result is the following.

\begin{theorem}\label{thmAllD}
In any dimension~$d\geqslant 1$, for every sleep rate~$\lambda>0$, the critical density of the Activated Random Walks model on~$\Z^d$ satisfies~$\mu_c(\lambda)<1$.
\end{theorem}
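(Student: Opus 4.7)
The strategy, sketched at the end of Subsection~1.1, is to prove an exponential non-fixation statement on the torus and then transfer it to $\Z^d$.

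First, I would establish the following \emph{torus statement}: for every dimension $d \geqslant 1$ and every sleep rate $\lambda > 0$, there exist $\mu < 1$ and $c > 0$ (depending only on $d$ and $\lambda$) such that, starting from an i.i.d.\ Poisson$(\mu)$ configuration of active particles on $\torus$, the probability that the system fixates before time $e^{c|\torus|}$ decays exponentially in $|\torus|$ as $n \to \infty$. Moreover, on this event, each site of $\torus$ is visited at least $e^{c|\torus|}$ times, by standard averaging combined with the abelian property.

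Second, I would deduce Theorem~\ref{thmAllD} from the torus statement by a monotone-coupling argument using the abelian property. The torus process with periodic boundary can be compared to the $\Z^d$-process at the same density on a central sub-box: by firing only sites of this sub-box and exploiting the fact that the odometer is monotone non-decreasing in the initial configuration and in the set of fireable sites, the number of visits to the origin in the $\Z^d$-process dominates that of a restricted torus process in which particles are killed upon exiting the sub-box. The torus statement then forces, in the limit $n\to\infty$, the $\Z^d$-process at density $\mu$ to visit the origin arbitrarily many times with positive probability, which upgrades via the zero-one law of Rolla--Sidoravicius~\cite{RS12} to $\mu_c(\lambda) \leqslant \mu < 1$.

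The main obstacle is the torus statement itself, and the difficulty splits according to the size of $\lambda$ (consistent with the two separate upper bounds promised in the abstract). In the small-$\lambda$ regime, the plan is to extend the construction of the companion paper~\cite{FG22}: one produces explicit active trajectories crossing the torus many times before falling asleep, and controls them via monotone couplings with simpler processes (activated random walks with a lower sleep rate, or independent random walks). In the large-$\lambda$ regime, where each active particle freezes almost immediately after each wake-up, this scheme breaks down, and one must instead argue that local pockets of sleeping particles reactivate each other through rare but sufficiently frequent encounters. I expect this second case to require a multiscale renormalization in which, at each scale, blocks of the torus contain enough particles to guarantee reactivation of their sleeping neighbours before local fixation; organising this coarse-graining across all scales, while keeping the density strictly below $1$, is where the technical heart of the paper presumably lies.
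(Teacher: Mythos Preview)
Your high-level architecture matches the paper: one proves an exponential lower bound on the stabilization time on~$\torus$ and transfers it to~$\Z^d$ via the machinery of~\cite{FG22} (this is exactly Lemma~\ref{lemmaConditionActivityTopplings}). But your sketch of the torus statement itself misses the mechanisms that make the argument work, and the hints you do give point in a different direction.

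The paper does \emph{not} control active trajectories directly, nor does it renormalize over spatial blocks of the torus. Instead it performs a union bound over all possible \emph{settling sets}~$A\subset\torus$ with~$|A|=\lceil\mu n^d\rceil$, and for each fixed~$A$ reduces to a density-one model in which particles may sleep only on~$A$. This reduction is the crucial step you never mention: it turns the question into showing that, uniformly over~$A$, the density-one system on~$A$ takes time~$\exp(\kappa n^d)$ to fixate with~$\kappa>\psi(\mu)$, so that the energy beats the entropy~$\binom{n^d}{|A|}$. The multiscale structure is then built on~$A$ (the \emph{dormitory hierarchy}, Section~\ref{sectionHierarchy}), not on the torus, and the inductive step is a \emph{ping-pong rally} in which two child clusters repeatedly reactivate one another, yielding a stabilization time roughly equal to the \emph{product} of their individual times. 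Making this product structure rigorous requires the \emph{coloured loops} of Section~\ref{subsectionColouredLoops}, which decouple the loops used at different levels of the hierarchy; without this device the dependence between rally length and stabilization times is intractable. None of these three ingredients---the settling-set decomposition, the ping-pong induction, or the coloured-loop decoupling---appears in your proposal, and each is essential: a generic block renormalization at density~$\mu<1$ would not produce the exponential-in-volume bound needed to beat the combinatorial factor.
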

This result is new in~$d=2$ and our proof
encompasses all dimensions (with small changes).
Since it is constructive, it does provide upper bounds on~$\mu_c(\lambda)$:
these bounds are new in~$d=2$, and improve existing bounds for~$d\geqslant 3$.
Note that we cover the regime of large~$\lambda$ as well
as small~$\lambda$.
Theorem~\ref{thmAllD} follows from the upper bounds
we now present (and from the fact that the critical density~$\mu_c$ is non-decreasing in~$\lambda$, see for example~\cite{Rolla20}).

\begin{theorem}\label{thm2D}
In dimension~$d=2$, there exists~$a>0$ such that, for~$\lambda$ small enough,
\begin{equation}
\label{bound2Dlow}
\mu_c(\lambda) \ \leqslant \ \lambda\,|\ln\lambda|^a\,,
\end{equation}
and there exists~$c>0$ such that, for~$\lambda$ large enough,
\begin{equation}
\label{bound2Dhigh}
\mu_c(\lambda) \ \leqslant \ 1-\frac{c}{\lambda(\ln\lambda)^2}\,.
\end{equation}
\end{theorem}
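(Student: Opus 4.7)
The plan is to work on the torus $\torus = (\Z/n\Z)^2$ and to prove both bounds by showing that the ARW started from a Poisson configuration of intensity $\mu$ matching the claimed upper bound fails to stabilize with probability tending to $1$ as $n\to\infty$. By the standard transference principle between torus and infinite lattice (see, e.g., \cite{RS12,Rolla20}), this yields the same upper bound on $\mu_c(\lambda)$ for $\Z^2$. The two bounds are then proven by separate constructions but share the same skeleton: partition the torus into mesoscopic boxes, analyze each box by an abelian firing of instructions, show that each box ``transmits'' activity to its neighbors with probability bounded away from zero, and conclude non-stabilization via a supercritical (oriented) percolation comparison.

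For \reff{bound2Dlow}, I would adapt the two-dimensional block dynamics of the companion paper~\cite{FG22} and keep careful track of the $\lambda$-dependencies. With $\mu = \lambda |\ln \lambda|^a$, I would pick a mesoscopic box side $\ell = \ell(\lambda)$ that is a suitable polylogarithm of $1/\lambda$, so that each box contains on average $\mu \ell^2$ particles, tuned to be just above the threshold needed for the transmission estimate. The core of the analysis is the balance equation inside a box: particles that enter either sleep inside the box or exit through its boundary, and the two-dimensional logarithmic exit probabilities combine with Poisson concentration to yield a quantitative transmission probability. The role of the exponent $a$ is precisely to make up for the logarithmic factor coming from the two-dimensional Green's function in the boundary estimate.

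For \reff{bound2Dhigh}, write $\mu = 1 - \rho$ with $\rho = c/(\lambda (\ln \lambda)^2)$, and work at the mesoscopic scale $R \sim 1/\sqrt{\lambda}$, the typical displacement of an active walker before sleeping. An $R$-box contains on average $\rho R^2 \sim 1/(\ln \lambda)^2 \ll 1$ empty sites, while a walker activated inside the box typically visits on the order of $1/(\lambda \ln(1/\lambda))$ distinct sites in $d=2$. The guiding heuristic is that the probability such a walker meets an empty site before sleeping is of order $\rho/(\lambda \ln(1/\lambda))$, which stays bounded away from $1$ with the chosen $\rho$; hence a freshly activated walker is more likely to wake a neighboring sleeper than to fall asleep alone, and a chain reaction propagates activity. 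Two logarithms arise: one controls the range of a $d=2$ random walk of length $1/\lambda$, the other is the recurrence correction governing the probability of hitting a sparse empty set, and they combine to produce the $(\ln \lambda)^2$ in the denominator.

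The main obstacle I expect in both regimes is controlling the correlations produced by the abelian sequential firing between neighboring boxes, and ensuring enough independence between the ``good box'' events for the percolation comparison to succeed. In the small-$\lambda$ regime this requires a quantitative refinement of~\cite{FG22}, whereas in the large-$\lambda$ regime I expect one needs a multi-scale decoupling in the spirit of Stauffer--Taggi~\cite{ST18} and Taggi~\cite{Taggi19}, adapted to the very different geometry of the near-saturated phase. I expect the large-$\lambda$ construction to be the genuinely hard part, since the near-saturation regime has been essentially unexplored in dimension two, and the precise $(\ln \lambda)^2$ factor is delicate to extract.
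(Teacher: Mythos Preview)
Your proposal is a strategy sketch, not a proof, and it departs entirely from the paper's argument. The paper does \emph{not} use mesoscopic boxes, block dynamics, or a percolation comparison. Instead, for each candidate settling set $A\subset\Z_n^2$ with $|A|=\lceil\mu n^2\rceil$, it reduces to a density-one model on $A$ (no sleeps outside $A$, one particle per site of $A$) and shows that stabilizing $A$ requires at least $\exp(\kappa n^2)$ topplings with $\kappa>\psi(\mu)=-\mu\ln\mu-(1-\mu)\ln(1-\mu)$; a union bound over all $A$ then beats the entropy $\binom{n^2}{\lceil\mu n^2\rceil}$. The exponential lower bound comes from a \emph{dormitory hierarchy} on $A$: level-$0$ clusters (singletons when $\lambda$ is small; dense $8r$-connected patches with $r\asymp\sqrt{\lambda}\ln\lambda$ when $\lambda$ is large) are shown by a supermartingale argument to emit a geometric number of loops with parameter $e^{-\alpha_0|C|}$. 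A level-$(j+1)$ cluster is a pair of level-$j$ clusters that play a \emph{ping-pong rally}, each stabilization of one side fully re-activating the other, so that the geometric parameters essentially multiply across levels. A device of \emph{coloured loops} (each loop from a distinguished vertex is randomly assigned a hierarchy level it is allowed to wake) decouples the loop counts across levels and makes the induction go through.

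Your percolation route faces a concrete obstruction in $d=2$: the recurrence of the walk makes the ``box transmits activity'' event depend on the entire history outside the box, and the Stauffer--Taggi decoupling you invoke is built on transience. For large $\lambda$ your order-of-magnitude heuristic is right, but converting it into a chain-reaction argument with enough independence for a percolation comparison is exactly the missing step you flag yourself, and it is the step the paper avoids altogether by working with a fixed $A$ and the energy--entropy balance. In that framework the two logarithms in $(\ln\lambda)^2$ arise cleanly: one from the $2$D hitting estimate $\Upsilon_2(r)\gtrsim 1/\ln r$ in the level-$0$ drift computation, the other from $\psi(1-\rho)\sim\rho|\ln\rho|$ on the entropy side.
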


\begin{theorem} \label{thm3D}
In dimension~$d\geqslant 3$, there exists~$c=c(d)>0$ such that, for~$\lambda$ small enough,
\begin{equation}
\label{bound3Dlow}
\mu_c(\lambda)
\ \leqslant \ c\,\lambda\,,
\end{equation}
and there exists~$c=c(d)>0$ such that, for~$\lambda$ large enough,
\begin{equation}
\label{bound3Dhigh}
\mu_c(\lambda)
\ \leqslant \ 1-\frac{c}{\lambda\ln\lambda}\,.
\end{equation}
\end{theorem}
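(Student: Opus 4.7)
The plan is to adapt the strategy used for Theorem~\ref{thm2D} to higher dimensions, within the same torus-based weak-stabilization framework developed in~\cite{FG22}. Concretely, I would work on the torus~$\torus$ with active particles drawn from a product Poisson measure of intensity~$\mu$ and show that, when~$\mu$ exceeds the claimed threshold, a positive fraction of sites is visited many times with overwhelming probability, uniformly in~$n$. The reduction from this statement to $\mu_c(\lambda) < \mu$ on~$\Z^d$ is the same as for Theorem~\ref{thm2D} and uses monotonicity in~$\mu$ together with standard coupling arguments. The main simplification available in dimensions $d \geqslant 3$ is the transience of simple random walk: the Green's function~$G(0)$ is finite, so the expected number of visits of a walker to any given site is~$O(1)$.

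For the low-$\lambda$ bound~\eqref{bound3Dlow}, the key observation is that, when following a single active particle stabilizing in a sufficiently well-controlled environment, the expected total number of sleep attempts it produces is at most $\lambda G(0) = O(\lambda)$, since sleep occurs at rate~$\lambda$ over a total occupation time of order~$G(0)$ per visited site. Implementing this bound through the abelian odometer and a Diaconis--Fulton-style representation converts the heuristic into an inequality of the form $\mu \leqslant c\lambda$ as a necessary condition for torus stabilization to succeed with positive probability, which yields~\eqref{bound3Dlow}.

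For the high-$\lambda$ bound~\eqref{bound3Dhigh}, the mechanism is different: almost every jump produces a sleep attempt, so an active particle typically survives only~$O(1/\lambda)$ steps between wake-ups and needs on the order of~$\ln \lambda$ successive reactivations to traverse a macroscopic block. I would run a block argument with blocks of side length~$\ell$ chosen so that the typical number of holes, of order $(1-\mu)\ell^d$, slightly exceeds the logarithm of the target failure probability per block. Comparing the induced block process with a supercritical independent percolation model then produces a positive lower bound on the density of blocks that remain active, which upgrades to activity of the full torus by a standard coupling.

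The main obstacle in both regimes is the treatment of correlations between particles during stabilization, which in the framework of~\cite{FG22} are controlled by exploring trajectories one at a time using the abelian property and comparing them with auxiliary random walks killed at well-chosen sinks. In the transient regime $d \geqslant 3$, the finiteness of~$G(0)$ provides uniform control of such correlations, and I expect the multi-scale structure used in the two-dimensional argument to collapse into an essentially single-scale block analysis. The logarithmic factor in~\eqref{bound3Dhigh} reflects precisely the cost of demanding super-polynomial reliability of each block, and removing it would require sharper geometric information on sleeping clusters that seems beyond the scope of the present approach.
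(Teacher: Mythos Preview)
Your proposal is a heuristic sketch, not a proof, and its mechanism is not the one the paper uses. In the paper, both regimes in $d\geqslant 3$ are handled by the \emph{same} machinery already built for Theorem~\ref{thm2D}: fix a settling set $A\subset\torus$ with $|A|=\lceil\mu n^d\rceil$, reduce to the density-one model on~$A$, and apply the metastability Lemma~\ref{lemmaMetastability} directly to the trivial one-level hierarchy $\calC_0=\{A\}$ with $r=\infty$. Transience enters only through the uniform bound $\Upsilon_d(r)\geqslant K>0$ of Lemma~\ref{lemmaHarnack}: this is what makes the supermartingale drift condition~\eqref{conditionMetastability} hold with no geometric restriction on the cluster. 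One then checks, by an explicit choice of $\alpha,\beta,\mu$ in each regime, that the resulting exponential rate beats the entropy $\psi(\mu)$, and concludes via Lemma~\ref{lemmaConditionActivityLoops}. There is no Green's-function occupation argument, no block construction, and no percolation comparison; the multi-scale apparatus of Section~\ref{section2D} collapses, but into the single supermartingale estimate of Section~\ref{sectionInit}, not into a block analysis.

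Your low-$\lambda$ step contains a concrete gap: the assertion that a single active particle ``produces at most $\lambda G(0)$ sleep attempts'' is not well-posed, since a particle attempts to sleep at every toppling with probability $\lambda/(1+\lambda)$ and the total number of attempts before success is geometric with mean $1+1/\lambda$, not $O(\lambda)$. You seem to be reaching for the range heuristic $E[\calR_\tau]\asymp 1/\lambda$ from Section~\ref{sectionIntro}, but converting that single-particle picture into a rigorous lower bound on the torus stabilization time, in the presence of all the interactions, is exactly the hard step, and you have not supplied it. For large~$\lambda$, the block/percolation route you outline resembles earlier approaches such as~\cite{Taggi19}, but it is not the paper's method, and as written it does not explain how the per-block failure probability is made small enough to beat the entropy $\psi(\mu)\sim (1-\mu)|\ln(1-\mu)|$ in the union bound over settling sets.
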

These bounds, in the two regimes of sleep rate, are of
a correct nature (up to some logarithmic factor in~$\lambda$), as we
try to justify in our heuristic discussion below.

\begin{remark}
Our proof method also works to show that~$\mu_c(\lambda)<1$ for every~\smash{$\lambda>0$} in dimension~$1$,
with minor changes, but in this setting it does not yield significantly new bounds on the critical density, 
hence we choose not to detail this case.
We limit ourselves to some comments in section~\ref{subsectionDim1} about how to adapt our proof to the one-dimensional case.
\end{remark}

\subsection{Heuristics}

At a heuristic level, the critical density of {\it frozen particles}
can be thought of as follows. Consider a configuration of
frozen particles drawn from a Poisson product measure
on~$\Z^d$ with density~$\mu$, and launch an active
particle at the origin. The density~$\mu$ equals~$\mu_c(\lambda)$ if
an active particle, in its journey before freezing,
encounters on average exactly one sleeping particle.
In other words, the number of active particles should strike a balance:
one particle wakes up when another one freezes.
Thus, if~$\calR_t$ is the number of distinct visited sites
in a time period~$[0,t]$ by a continuous-time
random walk, and if~$\tau$ is an independent
exponential time of mean~$1/\lambda$, we expect that
\begin{equation}\label{conj-1}
\bE[\calR_\tau]\cdot \mu_c(\lambda)
\ \simeq\ 1\,.
\end{equation}
The symbol~$\simeq$ is here to remind the reader that this is
just heuristics.
Now, there are two regimes: small~$\lambda$ where~$\tau$ is of
order of its mean~$1/\lambda$ which is large, and the regime of large~$\lambda$ where the particle makes a jump with probability~$1/(1+\lambda)$ which is small.
In the case of small~$\lambda$, we rewrite~\reff{conj-1} as~$\bE[\calR_{1/\lambda}]\cdot \mu_c(\lambda)\simeq 1$,
and we only need to recall the large time asymptotics of the range of a random walk (see for example~\cite{DE51}):
\[
\bE[\calR_t]=
\left\{
\begin{array}{ll}
\calO(\sqrt t) & \text{if }d=1 \\
\calO(\frac{t}{\ln t})& \text{if }d=2\\
\calO(t)& \text{if }d\geqslant 3.\\
\end{array} 
\right.\]
This implies the following heuristics for~$\mu_c(\lambda)$ for small~$\lambda$:
$$\mu_c(\lambda)=
\left\{
\begin{array}{ll}
\calO( \sqrt \lambda) & \text{if }d=1 \\
\calO(\lambda|\ln\lambda|)& \text{if }d=2\\
\calO(\lambda)& \text{if }d\geqslant 3.\\
\end{array}
\right.$$
When~$\lambda$ is large, the active particle makes one jump
with probability~$1/(1+\lambda)$ so that
\[
E[\calR_{\tau}]
\ \simeq\ 1+\frac{1}{\lambda}\,,
\]
which, plugged into~\reff{conj-1},
suggests that~$\mu_c(\lambda)\simeq 1-1/\lambda$ when~$\lambda\to\infty$.

Establishing these bounds remains a challenging problem,
as well as establishing some shape theorem, or understanding
ARW at the critical density. The model of ARW presents many 
other interesting questions, and we
refer to Rolla's survey~\cite{Rolla20} for a nice review. One difficulty
is that the time a particle stays in one of its two states
actually depends on the {\it local density} of particles which itself
changes with time: if an active particle travels amidst a region
of {\it high density}, then it most likely remains active as long
as it remains inside this region; instead, if it crosses a 
{\it low density} region, it most likely switches to a frozen state
at the first mark of its exponential clock.

\subsection{Sketch of the proof of Theorem~\ref{thm2D}}
\label{sec-sketch}

Most of our work is devoted to proving Theorem~\ref{thm2D} (the case~$d=2$), which is our main result, while Theorem~\ref{thm3D} (the transient case~$d\geqslant 3$) requires much less technology and simply follows as a by-product of an intermediate Lemma.

We now describe informally the six steps of our strategy, of which three are taken from~\cite{FG22}, and three are new.
The new ideas, namely the {\it dormitories}, the {\it ping-pong rally} and the {\it coloured loops}, are all of a hierarchical nature.

To show that it takes an exponentially large time to stabilize a configuration on the torus, we introduce a hierarchical structure on the set where the particles eventually settle, which we call the {\it hierarchical dormitory}.
With this construction, we first show that some elementary blocks of this hierarchy, called the {\it clusters}, have a stabilization time exponentially large in their size.
We then perform an induction using a toppling strategy  which we call {\it the ping-pong rally}, where neighbouring clusters interact and reactivate each other many times, leading to a stabilization time for their union which is roughly the product of the individual stabilization times.
Thus, we obtain that at each {\it space scale}, the stabilization time of a {\it cluster} is of order an exponential in the volume of the cluster.
The {\it coloured loops} are the last important ingredient in our proof: modifying slightly the dynamics of the ARW model by ignoring some reactivation events, we are able to obtain some independence between the different levels of the hierarchy, which turns out to be crucial in our inductive proof.

\subsubsection{Working on the torus and using the abelian property}
In~\cite{FG22} it is
shown that~$\mu_c<1$ if, when starting with a density~$\mu<1$
of active particles, the time needed to stabilize ARW on the torus~$\torus=(\Z/n\Z)^d$ is exponentially large in~$n$ with high probability.
Thus, we always consider~$\torus$ with~$n$ large enough.

We recall that there is a celebrated graphical representation, known as the site-wise or Diaconis-Fulton representation (see~\cite{DF91} for the original construction or~\cite{Rolla20} for a nice presentation in the context of ARW), where we
pile stacks of independent instructions on top of each site of~$\torus$, and use these instructions one after another to move the particles.

A key property of this representation of the model, known as the {\it abelian property}, is that the final configuration and the number of steps performed (these steps are called {\it topplings}) do not depend on the order with which the instructions are used, allowing us to choose an arbitrary strategy to move particles.
We rely heavily on this property throughout our work (although our graphical representation, described below in section~\ref{subsectionLoopRep} is no longer abelian, its construction itself relies on the abelian property of the ARW model).

\subsubsection{Decomposition over all possible settling sets}
Whereas~\cite{FG22} focuses on the case where the sleep rate~$\lambda$ is small,
in our case we fix~$\lambda$, which can be either small or large,
and we look for a density~$\mu<1$ so that the stabilization time on the torus is exponentially large.
Starting from a fixed initial configuration with~$\mu n^d$ particles, we decompose the probability to stabilize the configuration 
in a given time, summing over all the possible sets 
where the particles can settle.

Then, as in~\cite{FG22}, for a fixed couple~$(\lambda,\,\mu)$ we look for an estimate on the stabilization time which is uniform over all possible settling sets~$A\subset\torus$, and we perform a union bound.
Hence, in our estimate we get a combinatorial factor~\smash{$\binom{n^d}{\mu n^d}$} corresponding to the number of possible settling sets.
This factor can be thought of as an \guillemets{entropic} term, which we have to outweigh by an \guillemets{energy} term corresponding to the probability to stabilize in a given set~$A\subset\torus$ in a short time.
Since this entropic factor~\smash{$\binom{n^d}{\mu n^d}$} gets smaller when~$\mu$ is either close to~$0$ or close to~$1$, we concentrate on these two distinct regimes of the sleep rate, namely~$\lambda\to 0$ and~$\lambda\to \infty$, with a corresponding density~$\mu\to 0$ or~$\mu\to 1$.

The use of a uniform estimate over settling sets is responsible for some logarithm factors appearing in the bounds that we obtain on the critical density~$\mu_c$.
This can be seen in the statement of Lemma~\ref{lemmaConditionActivityTopplings} (where~$\psi(\mu)$ corresponds to the entropy, while~$\kappa$ controls the energy) and in the final estimates for our proofs in sections~\ref{section2D} and~\ref{section3D}, where we tune the parameters~$\lambda$ and~$\mu$ such that the entropy-energy balance is favourable.
Thus, a possible direction to improve our estimates on~$\mu_c$ could be to refine this union bound by ruling out some sets~$A$ on which it is very unlikely that the particles settle.

\subsubsection{Reduction to a model with density~1 on the trace graph}
Once such a set~$A\subset\torus$ is fixed, we look for an upper bound on the probability that the particles settle on~$A$ in a given time.
To this end, as explained in section~3.1 of~\cite{FG22}, we suppress all sleeping instructions on~$\torus\setminus A$.
Indeed, provided that particles eventually settle in~$A$, these sleeping instructions are overridden at some time or another.
Then, using the abelian property of the model, we may first let each particle move until it reaches an empty site of~$A$.
Thus, we may start from the configuration with exactly one active particle on each site of~$A$ (see section~3.2 of~\cite{FG22}).
Thus, we end up with a simplified model on a fixed subset~$A$ that we call the dormitory, which starts fully occupied with active particles.
These active particles cannot settle anywhere but on~$A$.

Let us now describe the order with which the particles move.
At the beginning of each step,
we choose an active particle and read its first unused instruction.
If it is a {\it sleep} instruction, 
the step is over, the particle falls asleep and we choose another particle at the next step.
Otherwise, if it is a {\it jump} instruction, 
we let the particle jump, and follow instructions along its path until it goes
back to its starting point. Indeed, when the particle is not at its starting position,
it is either outside of~$A$, and there is no sleep instruction, or
it is on top of another particle, and the sleep instructions have no effect.
Note that at the end of its {\it loop}, the particle is active, and has
waken up all sleeping particles along the loop. Thus, each step of the dynamics consists either in a sleep event or in drawing a loop (that is to say, the support of an excursion) from
a site with an active particle, 
and updating the set of active particles.

Doing so, after each step (i.e., after a sleep or a loop) we go back to a configuration where there is exactly one particle on each site of~$A$.
Thus, in a way, we reduced the problem to the study of an easier model with density~$1$,
but on the modified graph which is the trace graph on~$A$,
that is to say, the graph whose vertex set is~$A$ and where the transition probability from~$x$ to~$y$
is the probability that~$y$ is the first site of~$A$ encountered by a random walk on the torus
that has just jumped out of~$x$,
as if particles were \guillemets{sliding} on~$\torus\setminus A$ with infinite speed until they reach a site of~$A$.

This idea to reduce the model to the case of density~$1$ is a key idea which is at the core of both~\cite{FG22} and the present work.
Once constrained to this settling set~$A$ where there is just enough space for all the particles to fixate, it is very difficult for the model to reach the stable state where all the particles are sleeping.
Hence, a phenomenon of metastability is expected, the system remaining trapped in a situation where only a fraction of the particles are sleeping, with a huge potential barrier to overcome to bring all particles to a rest.

Thus, it is not surprising that, for every fixed settling set~$A$, the model where particles are forced to settle on~$A$ takes an exponentially large time to reach its stable state, this time being roughly distributed as a geometric random variable.
Many trials are necessary before overcoming the drift.

Therefore, if one can prove that, for a general class of graphs, the model with density~$1$ takes an exponential time to stabilize, there only remains to see if the combinatorial factor~\smash{$\binom{n^d}{\mu n^d}$} corresponding to the choice of the dormitory~$A$ can be outweighed by the estimate on the exponential fixating time of the density-one model on~$A$.

\subsubsection{Hierarchical Dormitories}
Once the settling set~$A$ is fixed, we introduce a deterministic hierarchical structure on~$A$, which is a new ingredient compared to~\cite{FG22}.
This structure consists in a finite decreasing sequence of subsets of~$A$, 
say~$A_0\supset A_1\supset\cdots\supset A_{\calJ}$, 
and a corresponding sequence of partitions~$\calC_0,\,\calC_1,\,\dots,\,\calC_{\calJ}$ (where~$\calC_j$ is a partition of~$A_j$) 
whose elements are called clusters (even though they are not necessarily connected).

The construction of the~$0$-th level of the hierarchy varies depending on which regime we are studying.
The idea is that these clusters are densely connected in some sense, so that the stabilization of any cluster~$C\in\calC_0$ produces a number of loops which is exponentially large in the size of~$C$, so that these clusters can interact with other clusters of~$\calC_0$ which are far apart.

In the regime of small~$\lambda$ (thus with~$\mu\to 0$), that is to say, for the proof of the bound~\reff{bound2Dlow}, the set~$A$ is rather sparse, so we simply take~$A_0=A$ and~$\calC_0$ composed only of singletons.
Indeed, the stabilization of a singleton already emits a number of loops which is geometric with mean~$1/\lambda$, which is large when~$\lambda\to 0$.
In the proof of the bound~\reff{bound2Dhigh} in the large~$\lambda$ regime,
the first partition~$\calC_0$ is composed of clusters of {\it high density}, so high that if at least a fraction of the particles in a cluster are frozen, then loops emanating from well chosen sites have a tendency to wake up many particles.

Then, in both regimes, clusters of~$\calC_1$ are obtained by pairing clusters of~$\calC_0$ when their distance is not too large.
Then, by way of induction at each step~$j$, we construct each partition~$\calC_j$ by merging pairs of clusters in~$\calC_{j-1}$ which are not too far apart.
In so doing, we merge as many pairs as possible, but possibly throw away clusters which are isolated.
Also, at each level some clusters are not merged, but we impose a minimal size for clusters at each level, so that the clusters get bigger and bigger along the hierarchy.
The construction stops when we obtain a partition~$\calC_{\calJ}$ which contains one single set.
We then have to check that we have not thrown away too much, so that this last cluster on top of the hierarchy contains at least a fraction of the initial set~$A$.

The detailed construction of this hierarchy is presented in section~\ref{sectionHierarchy}.

\subsubsection{Ping-pong rallies}
Let us now explain how we control the stabilization time.
In a first step, we prove that, for every cluster~$C\in\calC_0$, the number of topplings to stabilize the configuration on~$C$ is exponentially large in the size of~$C$.
This is done in section~\ref{sectionInit} by using that the number of sleeping particles has a negative drift (when at least a fraction of the cluster is sleeping), implying through a martingale argument that the stabilization time of~$C$ dominates a geometric random variable with exponentially large mean, with an explicit control on the parameter in the exponential (which is important to obtain our explicit bounds on~$\mu_c$).

Then, at each level of the hierarchy, we introduce {\it ping-pong rallies}.
We prove by induction on~$j$ that, for every~$C\in\calC_j$, the number of topplings necessary to stabilize~$C$ is exponentially large in the size of~$C$. 
We insist that at each {\it space scale} we need to control the whole law of the stabilization time, not just the tail.
We now present a mechanism behind the exponential fixation time.

Consider a cluster~$C = D \cup E$ in~$\mathcal{C}_{j + 1}$ 
with~$D$ and~$E$ in~$\mathcal{C}_j$.
Starting with~$D$ and~$E$ fully active
we perform sleeps and loops on each active site in~$D$
up to the full stabilization of~$D$, before doing the same in~$E$ to reach the full stabilization of~$E$.
Now, after these two rounds, some sites in~$D$ may
have been reactivated during the stabilization of~$E$. If~$D$ is
not too far from~$E$,~$D$ has great chances to be actually fully 
reactivated.  We then stabilize~$D$ again, which in turn reactivates~$E$
and so on and so forth up to the complete stabilization of~$C$.
We say that our merging clusters play a ping-pong rally
which ends when one cluster stabilizes
without reactivating {\it all the particles} of its playing partner.

The ping-pong rally is behind the reinforcement of activity.
Indeed, let~$t_D$ and~$t_E$ be the expected values of~$T_D$ and~$T_E$,
the random numbers of loops needed to
stabilize~$D$ and~$E$ respectively.
Let also~$\epsilon_D$ and~$\epsilon_E$
be the probabilities that~$E$ and~$D$
are not completely reactivated 
during the stabilization of~$D$ and~$E$, respectively.
The expected total number of excursions needed to stabilize~$C$
is then larger than or equal to 
$$
    \sum_{k \geqslant 0}\,\bigl[(1 - \epsilon_E) 
(1 - \epsilon_D)\bigr]^k (t_D + t_E)
\ =\ \frac{t_D + t_E}{\epsilon_D+\epsilon_E-\epsilon_D\epsilon_E}
    \ \geqslant\ \frac{t_D + t_E}{ \epsilon_D  + \epsilon_E}\,.
$$
Having reduced our analysis to the density-one,
hence metastable, systems~$D$ and~$E$,
we can expect~$T_D$ and~$T_E$ to be approximate geometric random variables
with success probability~$1 / t_D$ and~$1 / t_E$.
Having also chosen~$D$ and~$E$ close enough for them to merge at scale~$j + 1$,
we can also expect~$\epsilon_D$ and~$\epsilon_E$
to be of order~$1 / t_D$ and~$1 / t_E$ at most.
In metastable situations indeed, 
we can expect the thermalisation times to metastable equilibria
to be small with respect to the mean stabilization times.
In our case the latter,~$t_D$ and~$t_E$,
should be exponentially large in~$|D|$ and~$|E|$,
while the former should be only polynomial in~$|D|$ and~$|E|$.
For the ping-pong rally to stop,
a cluster should then essentially stabilize
within its thermalisation time to metastable equilibrium:
if not, it will produce an exponentially large number of loops
that will continue the ping-pong rally with very large probability.
Since the stabilization time
when starting from a fully active cluster 
will dominate the geometric stabilization time
when starting from metastable equilibrium,
$\epsilon_E$ and~$\epsilon_D$ should be
of order~$1 / t_D$ and~$1 / t_E$
up to logarithmic corrections at most.
This would give a lower bound for the mean number~$t_C$ 
of the needed excursions to stabilize~$C$ of order
$$
    \frac{t_D + t_E }{t_D^{-1} + t_E^{-1}}\ =\ t_D\times t_E\,.
$$
If our induction hypothesis assumes that~$t_D$ and~$t_E$
are exponentially large in the size of~$D$ and~$E$,
we would obtain from these heuristics that~$t_C$ is exponentially large 
in the size of~\smash{$D \cup E = C$}.

\subsubsection{Coloured Loops} 
The technical difficulties lie in the need to control the whole law of the stabilization time at each scale (and not only its expectation or its tail) and in the intricate dependence relation between the length of the ping-pong rally (i.e., how many times the sets~$D$ and~$E$ fully reactivate each other) and the duration of the successive stabilization steps of the rally.
It is not clear how the stabilization time of one set, say~$D$, is correlated with the event that the other set~$E$ is fully reactivated during this stabilization of~$D$.
Intuitively, knowing that the stabilization of~$D$ takes a long time, we have many loops emerging from~$D$ which can reactivate the sites of~$E$, but we also have some information on the shapes of these loops, namely that they tend to visit many sites of~$D$.

To overcome this issue of intertwined dependence,
we introduce in section~\ref{subsectionColouredLoops}
{\it distinguished sites} which bear {\it coloured loops}
which are used to activate clusters at a distinct
level of the hierarchy.
More precisely, each cluster of the hierarchy is equipped with a distinguished site, and each loop emerging from this site is devoted to activating one specific cluster, so that at each level of the hierarchy, a certain proportion of the loops are ignored and shelved apart for further levels.
Doing so, knowing that it takes a long time to stabilize~$D$, we only have an information on the loops which are devoted to reactivation inside of~$D$, while the loops devoted to reactivation of~$E$ are left blank, and are thus distributed as standard excursions.

By allowing only some loops to activate given particles,
we build a dynamics which is faster to stabilize.
Since we only need a lower bound on the stabilization time for the original dynamics,
we will avoid controlling the previously mentioned relaxation time to metastable equilibrium
by working with such a stochastic domination.
It will also turn out that our coloured loop numbers 
are positively correlated with the ping-pong rally lengths.
See section~\ref{subsectionColouredLoops} for the description of these coloured loops,
and section~\ref{sectionInduction} for the inductive step
where we use their crucial independence property.

\subsection{Sketch of the proof of Theorem~\ref{thm3D}}
\label{sketch3D}

In dimensions~$d\geqslant 3$, since the probability that a loop starting from any point~$x\in\torus$ visits any other vertex~$y\in\torus$ is bounded below by a universal positive constant, independent of~$n$ and of the distance between~$x$ and~$y$ (see Lemma~\ref{lemmaHarnack}), we may almost forget the geometry in our strategy.

We still reason with a fixed settling set~$A$ and we perform loops, going back after each step to a configuration with exactly one particle on each site of~$A$.
But, compared with the proof in dimension~$2$, we do not need any hierarchical structure on~$A$, nor to distinguish vertices or to colour the loops.
However, for coherence with the rest of the paper which is devoted to dimension~$2$ and to avoid introducing specific notation for this corollary, we say that we use a trivial hierarchy and we keep one distinguished vertex and coloured loops, but this is simply a matter of notation.

Then, our toppling strategy simply consists in toppling whatever active vertex in~$A$ and performing a sleep or a loop.
The result then follows from the computations of the initialization step in section~\ref{sectionInit}, which show that the system with density one has a metastable behaviour, easily leading to the bounds on the critical density indicated in Theorem~\ref{thm3D}.

\subsection{How to adapt our proof in dimension 1}
\label{subsectionDim1}

As explained above, our proof method also works to show that~$\mu_c<1$ for all~$\lambda>0$ in dimension~$1$, with some adaptations, and it also yields bounds on the critical density, but these bounds are not new.

The regime of small sleep rate~$\lambda$ is already pretty well understood.
We  refer to~\cite{ARS19} which shows that~$c\sqrt{\lambda}\leqslant\mu_c(\lambda)\leqslant C\sqrt{\lambda}$
for some constants~$c,\,C>0$ and~$\lambda$ small enough.

In the regime of large sleep rate~$\lambda$, to obtain a lower bound on~$\mu_c$ using our method, one needs to consider connected sets at the first level of the hierarchy.
Thus, the partition~$\calC_0$ is composed of the connected components of~$A$ which contain at least~$v$ vertices, with~$v$ a well chosen function of~$\lambda$.

Then, in the initialization step, one needs to control the drift in a finer way, using the fact that a connected component in dimension~$1$ is simply a segment.
Thus, one can choose an endpoint as the distinguished vertex and use the toppling procedure which simply consists in toppling the active site which is closest to the distinguished vertex.
Doing so, one can show that there is a drift which leads to the distinguished vertex being awaken many times.
To this end, instead of simply writing that the toppled site has a certain number of sleeping sites in a certain ball around itself, as in section~\ref{sectionInit}, one has to use the fact that there is one sleeping site at distance~$1$, another one at distance~$2$, and so on.
Summing the probabilities to wake up each of these sites, one obtains a series which diverges with the size of the cluster, showing that there is a drift which outweighs the sleep rate when the size~$v$ of the cluster is taken large enough.

After this step, one can conclude using the induction result given by Lemma~\ref{lemmaInduction}.
Thus, our proof also works in dimension~$1$.
But it turns out that, in this case, it does not yield significantly better results than the bounds existing in the literature.
Namely, in~\cite{HRR20} it is shown that~\smash{$\mu_c(\lambda)\leqslant 1-\exp(-c\lambda)$} for some~$c>0$ and~$\lambda$ large enough, and our method yields the same kind of estimate, hence the choice to restrict our exposition to dimensions at least~$2$.

\subsection{Organization of the paper}

After some preliminaries in section~\ref{sectionGeneralConsiderations}, we present the proof of
Theorem~\ref{thm2D} about~$d=2$ in section~\ref{section2D}, followed by the proof of Theorem~\ref{thm3D}, the transient
case~$d\geqslant 3$, in section~\ref{section3D}.
Both of these proofs rely on a certain number of intermediary Lemmas, which are proved in the subsequent sections.

The construction of the dormitories used in the two-dimensional case is presented in section~\ref{sectionHierarchy}, while the induction is performed in sections~\ref{sectionInit} (for the
initialization) and~\ref{sectionInduction} (for the inductive step).
Finally in the appendix, we gather the proofs of some
technical Lemmas.

\section{General considerations}
\label{sectionGeneralConsiderations}

We now present some general definitions and preliminaries.
Let~$d,\,n\geqslant 1$.

\subsection{Notation}
\label{subsectionNotation}

Recall that we write~$\torus=(\Z/n\Z)^d$ for the~$d$-dimensional torus.
Denoting by~\smash{$\pi_n:\Z^d\to\torus$} a standard projection from~$\Z^d$ onto the torus, we define the distance between two points~$x,\,y\in\torus$ as
\begin{equation}
\label{defDistance}
d(x,\,y)
\ =\ \inf\Big\{\,\norme{a-b}_{\infty}\quad:\quad
a,\,b\in\Z^d\,,\ \pi_n(a)=x,\,\ \pi_n(b)=y\,\Big\}\,.
\end{equation}
For every non-empty set~$C\subset\torus$, we define its diameter
$$\diam\,C\ =\ \max_{x,\,y\,\in\, C}\,d(x,\,y)\,.$$
For every~$x\in\torus$ and every~$r\in\N$, we denote by~$B(x,\,r)$ the closed ball in the torus centered on~$x$ with radius~$r$, that is to say,
$$B(x,\,r)\ =\ \big\{y\in\torus\ :\ d(x,\,y)\leqslant r\big\}\,.$$
Note that the volume of~$B(x,\,r)$ is simply given by
\begin{equation}
\label{volumeBall}
\big|B(x,\,r)\big|
\ =\ \begin{cases}
(2r+1)^d &\text{ if } n\geqslant 2r+1\,,\\
n^d &\text{ otherwise.}
\end{cases}
\end{equation}
With a slight abuse of language, a set~$C\subset\torus$ is said to be connected if, for any two points~$x,\,y\in C$, there exists~$k\in\N$ and a sequence~$x_0,\,\ldots,\,x_k\in C$ such that~$x_0=x$,~$x_k=y$ and~$d(x_j,\,x_{j+1})=1$ for every~\smash{$j<k$} (as if there were diagonal edges).
Similarly, if~$r\in\N$, a set~$C\subset\torus$ is said to be~$r$-connected if, for any two points~$x,\,y\in C$, there exists~$k\in\N$ and a sequence~$x_0,\,\ldots,\,x_k\in C$ such that~$x_0=x$,~$x_k=y$ and~$d(x_j,\,x_{j+1})\leqslant r$ every~\smash{$j<k$}.

For every set~$E$, we denote by~$\calP(E)$ the set of all subsets of~$E$.

\subsection{Sufficient condition for activity in terms of the number of topplings}

Lemma~\ref{lemmaConditionActivityTopplings} below gives a sufficient condition on the two parameters~$\lambda$ and~$\mu$ of the model to show that we are in the active phase.
This Lemma follows from~\cite{FG22} and to state it we need to introduce some notation.

The Lemma is formulated in terms of the number of topplings necessary to stabilize a given initial configuration of the model.
The number of topplings refers to the total number of jump and sleep events.
In the continuous-time model, each active particle jumps with rate~$1$ and tries to fall asleep with rate~$\lambda$ (which can either lead to the particle effectively falling asleep if it is alone or to nothing happening otherwise), and both of these events are called topplings.

In the site-wise representation of the model where, for each site, we draw an infinite sequence of toppling instructions (which can consist either of sleep instructions or of jump instructions indicating a neighbouring site to jump on), the number of topplings refers to the number of toppling instructions used.
We refer to~\cite{Rolla20} for a detailed presentation of this representation.

As explained in the sketch given in section~\ref{sec-sketch}, we consider a modification of the ARW model where, for a fixed~$A\subset\torus$, particles cannot fall asleep out of~$A$ (as if the sleeping rate was~$\lambda$ on~$A$ and~$0$ on~$\torus\setminus A$, or if no sleep instructions are drawn out of~$A$).
We write~$\calP^{\lambda,A}_\mu$ for the probability distribution relative to this modification of the ARW model where particles are not allowed to sleep out of~$A$, and we write~$M_A$ for the number of topplings on the sites of~$A$ necessary to stabilize.
The initial configuration is written~$\eta_0:\torus\to\N$, where~$\eta_0(x)=k$ means that we start with~$k$ active particles on the site~$x$.

\begin{lemma}
\label{lemmaConditionActivityTopplings}
Let~$d\geqslant 1$, let~$\lambda>0$ and~$\mu\in(0,1)$, and let us write
\begin{equation}
\label{defPsi}
\psi(\mu)
\ =\ -\mu\ln\mu-(1-\mu)\ln(1-\mu)\,.
\end{equation}
If there exist~$a>0$ and~$b>\psi(\mu)$ such that, 
for~$n\in\N$ large enough, for every~$A\subset\torus$ such that~$|A|=\Ceil{\mu n^d}$, we have
\begin{equation}
\label{conditionMA}
\calP^{\lambda,A}_\mu\Big(\,M_A\,\leqslant\,e^{a n^d}
\ \Big|\ \eta_0=\mathbf{1}_A\,\Big)
\ \leqslant\ e^{-b n^d}\,,
\end{equation}
then~$\mu\geqslant \mu_c(\lambda)$, where~$\mu_c$ is the critical density of the usual ARW model on~$\Z^d$.
\end{lemma}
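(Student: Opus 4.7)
The plan is to run the ARW model on the torus~$\torus$ starting from a product Poisson configuration of intensity~$\mu$, and to prove that under the stated hypothesis, the total number of topplings needed to stabilize this configuration is at least~$e^{an^d}$ with probability tending to~$1$ as~$n\to\infty$. Once this is established, the reduction from the torus to~$\Z^d$ of~\cite{FG22}, already invoked in the sketch, lets us conclude that~$\mu\geqslant\mu_c(\lambda)$ on the infinite lattice.

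First, I would condition on the total number of particles~$N=\sum_{x\in\torus}\eta_0(x)$. By standard concentration of Poisson sums, $N=\Ceil{\mu n^d}$ with probability at least~$1-e^{-cn^d}$ for some~$c>0$, so it is harmless to work on this event. If the dynamics stabilizes, the \emph{settling set}~$A\subset\torus$ of sites carrying a sleeping particle at the end has exactly~$N=\Ceil{\mu n^d}$ elements, since each site hosts at most one sleeping particle. On the event that the settling set equals~$A$, any sleep instruction located outside~$A$ is overridden later on, so suppressing all such instructions leaves the final configuration and the total number of topplings unchanged: by the abelian property, the number of topplings then coincides with the one produced by the modified dynamics~$\calP^{\lambda,A}_\mu$ in which sleep is allowed only on~$A$. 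A second use of the abelian property allows one to perform first the jumps needed to drive the initial configuration to~$\mathbf{1}_A$ (one active particle per site of~$A$), and to count only the topplings that happen afterwards, which are distributed as~$M_A$ under~$\calP^{\lambda,A}_\mu$ started from~$\mathbf{1}_A$. This is exactly the reduction carried out in sections~3.1 and~3.2 of~\cite{FG22}, from which the needed inequality is inherited.

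Combining these remarks with a union bound over possible settling sets, and writing~$M$ for the total number of topplings needed by the original dynamics, one obtains
$$
\Proba\big(M\leqslant e^{an^d}\big)
\ \leqslant\ \binom{n^d}{\Ceil{\mu n^d}}\,\max_{|A|=\Ceil{\mu n^d}}\,
\calP^{\lambda,A}_\mu\Big(M_A\leqslant e^{an^d}\ \Big|\ \eta_0=\mathbf{1}_A\Big)\ +\ e^{-cn^d}\,.
$$
By Stirling's formula, $\binom{n^d}{\Ceil{\mu n^d}}\leqslant e^{\psi(\mu)n^d+o(n^d)}$ with~$\psi$ defined in~\reff{defPsi}, and by the hypothesis~\reff{conditionMA} the maximum is at most~$e^{-bn^d}$ for~$n$ large. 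Since~$b>\psi(\mu)$, the right-hand side tends to~$0$, so~$M>e^{an^d}$ with probability going to~$1$, and the reduction from~\cite{FG22} yields~$\mu\geqslant\mu_c(\lambda)$.

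The main obstacle is the abelian-property argument of the second paragraph: one has to justify rigorously that, on the event~$\{\text{settling set}=A\}$, the number of topplings produced by the original ARW dynamics on~$\torus$ stochastically dominates~$M_A$ under~$\calP^{\lambda,A}_\mu$ started from the deterministic configuration~$\mathbf{1}_A$. Once this identification is available (and it is precisely the content of the cited sections of~\cite{FG22}), the rest of the proof is just the entropy-versus-energy union bound displayed above.
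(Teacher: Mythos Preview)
Your overall architecture matches the paper's proof: fix the settling set, use Lemmas~5 and~6 of~\cite{FG22} to reduce to~$M_A$ under~$\calP^{\lambda,A}_\mu$ with~$\eta_0=\mathbf{1}_A$, do a union bound over~$A$, and balance the entropy~$\psi(\mu)$ against the exponent~$b$ via Stirling. The invocation of~\cite{FG22} for the final passage from the torus to~$\Z^d$ is also what the paper does (specifically sections~3.9--3.10 and Theorem~4 of~\cite{FG22}).

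There is, however, a genuine error in your treatment of the initial condition. The claim that~$N=\Ceil{\mu n^d}$ with probability at least~$1-e^{-cn^d}$ is false: for a Poisson sum with mean~$\mu n^d$, the probability of hitting any single value is~$O(n^{-d/2})$, and even~$\Proba(N\geqslant\mu n^d)$ is only about~$1/2$, not exponentially close to~$1$. So you cannot simply condition on~$\{N=\Ceil{\mu n^d}\}$ and discard the complement. The paper avoids this by a different route: it first proves the bound uniformly over \emph{deterministic} initial configurations~$\eta$ with~$|\eta|=\Ceil{\mu n^d}$, then uses monotonicity in the initial configuration to get
\[
\calP^\lambda_\mu\Big(M\leqslant e^{cn^d}\ \Big|\ |\eta_0|\geqslant\mu n^d\Big)<e^{-cn^d}\,,
\]
and finally, for any~$\mu'>\mu$, uses the large-deviation bound~$\calP^\lambda_{\mu'}\big(|\eta_0|<\mu n^d\big)\leqslant e^{-c'n^d}$ to conclude~$\mu'\geqslant\mu_c(\lambda)$. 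Letting~$\mu'\downarrow\mu$ gives~$\mu\geqslant\mu_c(\lambda)$. This~$\mu'>\mu$ device (together with monotonicity) is the missing ingredient in your argument; once you insert it, the proof goes through exactly as you outlined.
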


The proof of this Lemma, which is only a combination of several results of~\cite{FG22}, is briefly presented in the appendix~\ref{sectionProofLemmaCondition} for completeness.

\subsection{Dormitory hierarchy}
\label{subsectionDormitory}

Given a settling set~$A\subset\torus$, we now describe a hierarchical structure that we build on~$A$ and which is the basis of our toppling strategy.
This structure, called the dormitory hierarchy, is deterministically associated to the set~$A$ and also depends on some parameters~$v$ and~$(D_j)_{j\in\N}$ which will be chosen as functions of the sleep rate~$\lambda$.
A dormitory hierarchy is defined as follows:

\begin{definition}
Let~$d,\,n,\,v\geqslant 1$ and let~\smash{$D=(D_j)_{j\in\N}\in(\N\setminus\{0\})^{\N}$}.
For every subset~\smash{$A\subset\torus$}, we call a~$(v,\,D)$-dormitory hierarchy on~$A$ a finite decreasing sequence of subsets~\smash{$A\supset A_0\supset \ldots \supset A_{\calJ}$}, with~${\calJ}\in\N$ and, for every~$j\leqslant {\calJ}$, a partition~$\calC_j$ of~$A_j$ such that:
\begin{enumerate}[(i)]
\item\label{conditionCard}
For~$0\leqslant j\leqslant {\calJ}$, for every~$C\in\calC_j$, we have~$|C|\geqslant 2^{\Ent{j/2}}v$;
\item\label{conditionMerge}
For~$0\leqslant j\leqslant {\calJ}-1$, for every~$C\in\calC_{j+1}\setminus\calC_{j}$, we have~$\diam\,C\leqslant D_j$ and there exist two sets~$C_0,\,C_1\in\calC_{j}$ such that~$C=C_0\cup C_1$;
\item\label{conditionFinal}
The last partition~$\calC_{\calJ}$ contains one single set.
\end{enumerate}
\end{definition}

Given a dormitory hierarchy~$(A_j,\,\calC_j)_{j\leqslant \calJ}$, for every~$j\leqslant {\calJ}$ and every~$x\in A_j$, we define~$C_j(x)$ to be the set~$C\in\calC_j$ such that~$x\in C$.
When~$x\in \torus\setminus A_j$ or~$j>{\calJ}$, we set~$C_j(x)=\varnothing$.
The sets~$C\in\calC_j$ are called {\it clusters} at the level~$j$.
The parameter~$v\in\N$ controls the volume of the clusters at each level of the hierarchy, while the sequence of diameters~$D_j$ ensures that we only merge clusters which are not too far apart.

One might wonder why the condition~\textit{\reff{conditionCard}} is not rather~$|C|\geqslant 2^j v$, which would be possible by merging all clusters in pairs at each level.
This would also work, but it would imply throwing away some clusters which end up alone: for example if~$\calC_0$ contains only three clusters of size exactly~$v$, then to construct~$\calC_1$ one of them would have to be thrown away.
In fact, this leads to throwing away too many clusters and in particular this would weaken the estimate~\reff{bound2Dlow} on the critical density when~$\lambda\to 0$.
Hence the choice of this weaker condition~\textit{\reff{conditionCard}}, which enables us to deal with odd numbers of clusters grouped together: for example groups of three clusters can be merged together in two steps.
This still leads to throwing away some clusters at each level (this is why the sequence~$(A_j)$ is decreasing), but we only discard a cluster if is isolated and not just because it belongs to a group of an odd number of clusters close to one another.
See our construction in section~\ref{sectionHierarchy} for more details.

\subsection{Distinguished vertices and coloured loops}
\label{subsectionColouredLoops}

\subsubsection{Distinguished vertices}
Let~$A\subset\torus$, and let~\smash{$(A_j,\,\calC_j)_{j\leqslant {\calJ}}$} be a dormitory hierarchy on~$A$, as defined in section~\ref{subsectionDormitory}.
We define recursively a distinguished 
vertex in each set of the partitions.
The distinguished point of a set~$C$ is written~$\dist_C$ and the particle sitting in~$\dist_C$ is called
the distinguished particle of the cluster~$C$.
For every~$C\in\calC_0$, we simply set~$\dist_C=\min\,C$, for an arbitrary order on the vertices of the torus.
Then, for~$1\leqslant j\leqslant {\calJ}$, if~$C\in\calC_j\setminus\calC_{j-1}$, the property~\textit{(\ref{conditionMerge})} of the hierarchy tells us that~$C$ is the union of two clusters of~$\calC_{j-1}$.
In this case, we let~$\dist_C$ be the distinguished vertex of the biggest of these two clusters (in terms of number of vertices, and with an arbitrary rule to break ties).

We say that a vertex~$x$ is distinguished at the level~$j\leqslant {\calJ}$ if there exists~$C\in\calC_j$ such that~\smash{$x=\dist_C$}, that is to say, if~\smash{$x=\dist_{C_j(x)}$}.
If~$j>{\calJ}$, we say that no vertex is distinguished at level~$j$.
Note that if~$x$ is distinguished at a certain level~$j$, then it is also distinguished at all levels~$j'$ for~$j'<j$.

\subsubsection{Toppling steps: sleeps and loops}

As explained above, we reason with a fixed subset~$A\subset\torus$ and we study the number of topplings necessary for all the particles to fall asleep, starting from the configuration with one active particle on each site of~$A$, in a modified model where there are no sleep instructions outside of~$A$.

As in~\cite{FG22}, our toppling strategy consists in a certain number of steps such that, after each step, there is still exactly one particle on each site of~$A$.
Thus, the configuration of the model at each step may be encoded by the subset~$R\subset A$ of the sites which contain one active particle, while each site of~$A\setminus R$ contains one sleeping particle.
We say that a set~$C\subset A$ is stable if~$R\cap C=\varnothing$.

At each step, we start by choosing a site~$x\in A$ where an active particle is present. With probability~$\lambda/(1+\lambda)$, this particle falls asleep on~$x$ (we call this step an~$x$-sleep) and we proceed to the next step.

Otherwise (hence with probability~$1/(1+\lambda)$), the particle makes an~$x$-loop, that is to say, it performs a simple random walk on the torus, until it goes back to its starting point~$x$, where it is left, active.
The sleeping particles met along this loop are waken up by the passage of the particle, but only under a certain condition, depending on the \guillemets{colour} of the loop, as explained below.

\subsubsection{Coloured loops}
The coloured loops are a new ingredient compared to~\cite{FG22}: each time a loop starts from a site~$x$, we assign to this loop a random colour~$J$, where~$J+1$ is drawn from a geometric distribution with parameter~$1/2$.
Then, if along its loop starting from~$x$ the particle meets a sleeping particle at a site~$y$, we only wake up the sleeping particle if~$y\in w(x,\,J)$, where the function~$w$ is defined as follows: for every~$x\in A$ and every~$j\in\N$, we set
\begin{equation}
\label{defW}
w(x,\,j)
\ =\ 
\begin{cases}
C_{j+1}(x)\setminus C_{j}(x)
&\text{if }x\text{ is distinguished at level }j\,,\\
\varnothing
&\text{if }x \text{ is distinguished at level }0\text{ but not at level }j\,,\\
C_0(x)
&\text{if }x\text{ is not distinguished at any level.}
\end{cases}
\end{equation}

We now explain the practical meaning of the above definition, which is illustrated on figure~\ref{figureLoops}.

If~$x$ is distinguished at level~$0$ but not at level~$1$, then the~$x$-loops of colour~$0$ (on average half of the~$x$-loops) can only wake up the particles in~$C_1(x)\setminus C_0(x)$, and the rest of the~$x$-loops are ignored, that is to say, they cannot wake up anyone (because~$w(x,\,j)=\varnothing$ for all~$j\geqslant 1$).

If~$x$ is distinguished at levels~$0,\,\ldots,\,j$ but not at level~$j+1$, then the loops of colour~$0$ (which represent on average half of the loops) are devoted to~$C_1(x)\setminus C_0(x)$, while the loops of colour~$1$ (about a quarter of the loops) are devoted to~$C_2(x)\setminus C_1(x)$, and so on, until the loops of colour~$j$ (an average proportion~$1/2^{j+1}$ of the loops) which are devoted to~\smash{$C_{j+1}(x)\setminus C_j(x)$}, whereas the loops of colour strictly more than~$j$ cannot wake up anyone.

As for the loops coming from sites~$x$ which are not distinguished at any level, their colours have no importance and they are only allowed to wake up the sites in the same~$0$-level component~$C_0(x)$.
Note that, on the contrary, the loops coming from a distinguished site~$x$ can never wake up the other sites of~$C_0(x)$.

Note also that, for example, if~$C\in\calC_0\cap\calC_1$ (that is to say, if the cluster~$C$ is not merged with another cluster of~$\calC_0$), then the~$\dist_C$-loops of colour~$0$ are not allowed to wake up anyone, since we have~$w(\dist_C,\,0)=C\setminus C=\varnothing$.

\begin{figure}
\begin{center}
\begin{tikzpicture}
\filldraw[very thick, rounded corners=10pt,fill=blue!10] (-1,-6) rectangle (13,4.5);
\filldraw[very thick, rounded corners=10pt,fill=red!10] (-.5,-.5) rectangle (12.5,3.5);
\filldraw[very thick, rounded corners=10pt,fill=gray!30] (0,0) rectangle (4,3);
\filldraw[very thick, fill=gray!50] (0.6,0.2) rectangle (3.4,1);
\draw[->,very thick] (2,2) -- node[right]{all loops} (2,1.2);
\draw[->,very thick] (2.5,2.1) to[out=-30, in=-90]
(3.3,2.2) to[out=90,in=-10] (2.7,2.6);
\draw (2,2.5) node{$C\in\calC_0$};
\draw (2,0.6) node{$\dist_C={\color{red}\dist_E}={\color{blue}\dist_I}$};

\begin{scope}[xshift=8cm]
\filldraw[very thick, rounded corners=10pt,fill=gray!30] (0,0) rectangle (4,3);
\filldraw[very thick, fill=gray!50] (1.3,0.2) rectangle (2.7,1);
\draw[->,very thick] (2,2) -- node[right]{all loops} (2,1.2);
\draw[->,very thick] (2.5,2.1) to[out=-30, in=-90]
(3.3,2.2) to[out=90,in=-10] (2.7,2.6);
\draw (2,2.5) node{$D\in\calC_0$};
\draw (2,0.6) node{$\dist_D$};
\end{scope}

\draw[->,very thick,red] (3.4,0.3) -- (8,0.3);
\draw[red] (6,0.6) node{loops of colour 0};
\draw[->,very thick,red] (9.3,0.9) -- (4,0.9);
\draw[red] (6,3) node{$E=C\cup D\in\calC_1$};

\begin{scope}[yshift=-5cm]
\filldraw[very thick, rounded corners=10pt,fill=red!10] (-.5,-.5) rectangle (12.5,3.5);
\filldraw[very thick, rounded corners=10pt,fill=gray!30] (0,0) rectangle (4,3);
\filldraw[very thick, fill=gray!50] (1,0.2) rectangle (3,1);
\draw[->,very thick] (2,2) -- node[right]{all loops} (2,1.2);
\draw[->,very thick] (2.5,2.1) to[out=-30, in=-90]
(3.3,2.2) to[out=90,in=-10] (2.7,2.6);
\draw (2,2.5) node{$F\in\calC_0$};
\draw (2,0.6) node{$\dist_F={\color{red}\dist_H}$};

\begin{scope}[xshift=8cm]
\filldraw[very thick, rounded corners=10pt,fill=gray!30] (0,0) rectangle (4,3);
\filldraw[very thick, fill=gray!50] (1.3,0.2) rectangle (2.7,1);
\draw[->,very thick] (2,2) -- node[right]{all loops} (2,1.2);
\draw[->,very thick] (2.5,2.1) to[out=-30, in=-90]
(3.3,2.2) to[out=90,in=-10] (2.7,2.6);
\draw (2,2.5) node{$G\in\calC_0$};
\draw (2,0.6) node{$\dist_G$};
\end{scope}

\draw[->,very thick,red] (3,0.3) -- (8,0.3);
\draw[red] (6,0.6) node{loops of colour 0};
\draw[->,very thick,red] (9.3,0.9) -- (4,0.9);
\draw[red] (6,3) node{$H=F\cup G\in\calC_1$};
\end{scope}

\draw[->,very thick,blue] (1.6,0.2) -- (1.6,-1.5);
\draw[->,very thick,blue] (1.2,-4) -- (1.2,-.5);
\draw[blue] (3,-1) node{loops of colour 1};
\draw[blue] (6,4) node{$I=E\cup H\in\calC_2$};
\draw[->,very thick,green!50!black] (1,1) -- (1,5.3);
\draw[->,very thick,green!50!black] (.6,5.3) -- (.6,4.5);
\draw[green!50!black] (1.1,4.9) node[right]{loops of colour 2};
\end{tikzpicture}
\end{center}
\caption{\label{figureLoops}
Clusters of the dormitory hierarchy are drawn in rounded rectangles.
Each cluster bears a distinguished vertex which is represented by a dark normal rectangle.
The loops from the sites which are not distinguished are only allowed to wake up sites in the same cluster of~$\calC_0$, including the distinguished site.
The loops of colour~$0$ emitted by the distinguished site~$\dist_C$ are only allowed to wake up the sites in~$D$, while the loops of colour~$1$ emitted by~$\dist_E=\dist_C$ are only allowed to wake up sites in~$H$, and the loops of colour~$3$ are devoted to waking up the sites in another cluster of~$\calC_1$ with which~$I$ merges at the next level.
}
\end{figure}
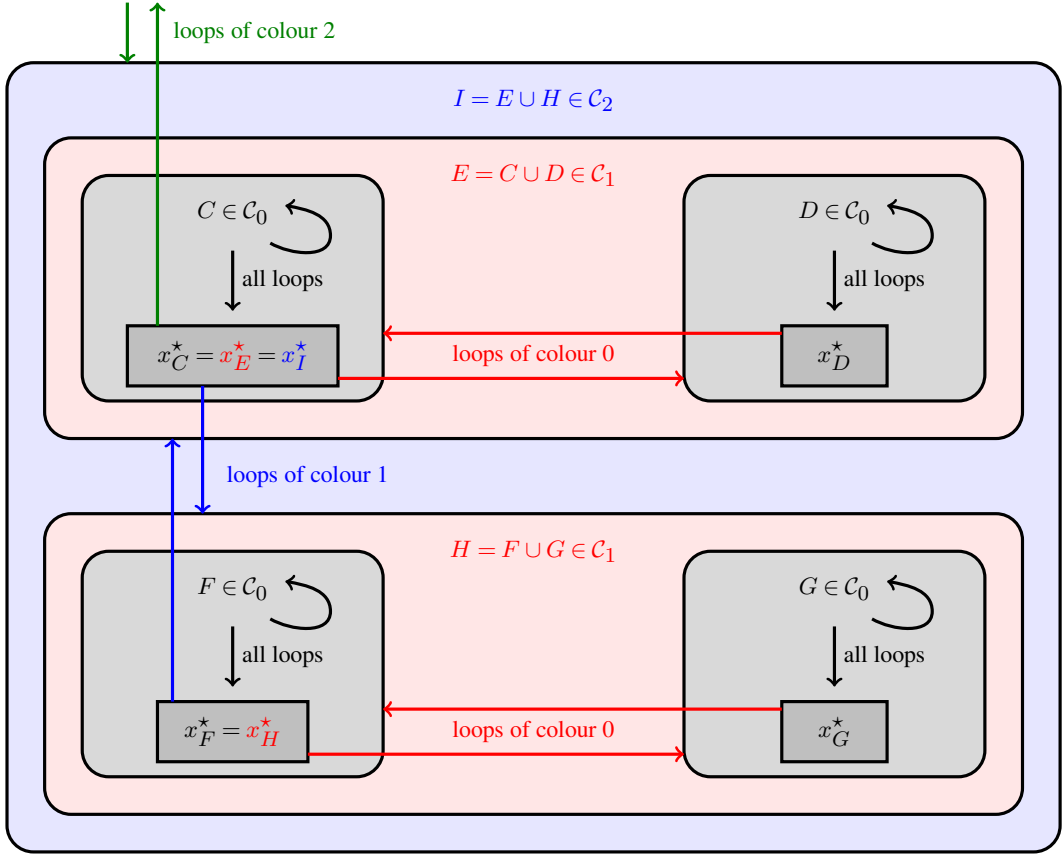

\subsubsection{Comparison with the original ARW model}
If all the particles are sleeping after a certain sequence of steps using this restriction on coloured loops, then the same sequence of steps can be performed in the original model, where no waking up events are ignored (the evolution of the two models can be coupled such that, at any time, the configuration in the modified model is always \guillemets{below} the configuration in the original model after the same number of steps).
This sequence might not be enough to stabilize the configuration in the original model but the number of steps performed provides a lower bound on the number of topplings necessary to stabilize the configuration in the original model (see Lemma~\ref{lemmaConditionActivityLoops} and its proof in the appendix~\ref{sectionProofLemmaCondition} for more details).

\subsection{The loop representation of the modified ARW model}
\label{subsectionLoopRep}

Let us fix~$A\subset\torus$, along with a dormitory hierarchy~$(A_j,\,\calC_j)_{j\leqslant\calJ}$.

\subsubsection{Our probability space}
We now describe a representation of the model which is convenient for our proof method, and which consists in storing an infinite array of loops above every vertex.
The dormitory~$A$ being fixed, we consider independent random variables
$$\left\{\begin{aligned}
&\big(I(x,\,h)\big)_{x\in A,\,h\in\N}\in\{0,1\}^{A\times\N}\,,\\
&\big(J(x,\,\ell)\big)_{x\in A,\,\ell\in\N}\in\N^{A\times\N}\,,\\
&\big(\Gamma(x,\,\ell,\,j)\big)_{x\in A,\,\ell\in\N,\,j\in\N}\in\calP(A)^{A\times\N^2}\,,
\end{aligned}\right.$$
where the variables~$I(x,\,h)$ are Bernoulli with parameter~$\lambda/(1+\lambda)$ while the variables~\smash{$1+J(x,\,\ell)$} are geometric with parameter~$1/2$ and~$\Gamma(x,\,\ell,\,j)$ is distributed as the support of a symmetric random walk on the torus started and killed at~$x$, that is to say, for every~$B\subset\torus$, we have
$$\Proba\big(\Gamma(x,\,\ell,\,j)=B\big)
\ =\ P_x\Big(\,\big\{y\in\torus\ :\ T_y<T_x^+\big\}=B\,\Big)\,,$$
where~$T_y$ denotes the first hitting time of~$y$, while~$T_x^+$ is the first return time to~$x$, and~$P_x$ is the probability measure relative to the symmetric random walk on the torus started at~$x$.
Probabilities and expectations are simply denoted by~$\PlA$ and~$\ElA$, which depend implicitly on the parameter~$\lambda$ and on the set~$A$.

\subsubsection{Update rules}
We now describe the update rules of our model.
Recall that a configuration of our model is a subset~$R\subset A$ indicating which sites are active.
The idea is that, when we perform a toppling step at a site~$x$, to decide whether this step is an~$x$-sleep or an~$x$-loop we look at the instruction~$I(x,\,h)$, where~$h$ counts the total number of~$x$-loops and~$x$-sleeps already performed, so that we do not use twice the same variable~$I(x,\,h)$.

If~$I(x,\,h)=1$, the particle falls asleep (this is what we call an~$x$-sleep).
Otherwise, we perform an~$x$-loop with colour~$j=J(x,\,\ell)$, where~$\ell$ is the number of loops (of any colour) which have already been performed at~$x$.
The effect of this loop is to wake up all the particles in~$\Gamma(x,\,\ell,\,j)\cap w(x,\,j)$, where~$w:A\times\N\to\calP(A)$ is the function defined by~\reff{defW} which indicates the set of sites that~$x$ has the right to wake up during a loop of colour~$j$.

Note that, with this notation, the array~$\Gamma$ contains too many loops because, for every~$x\in A$ and every~$\ell\in\N$, at most one of the loops~$\{\Gamma(x,\,\ell,\,j),\,j\in\N\}$ is used, depending on the colour~$J(x,\,\ell)$ of the~$x$-loop numbered~$\ell$.
But this notation is more convenient to highlight the independence between loops of different colours, in particular in section~\ref{sectionInduction}.

To update the configuration we need to recall the numbers of instructions and loops already used at each vertex.
This is the role of what we call the odometer function~$h:A\rightarrow\N$ and the loop odometer function~$\ell:A\rightarrow\N$.

\subsubsection{Step-toppling operator}
Given a configuration~$R\subset A$, an odometer~$h:A\rightarrow\N$, a loop odometer~$\ell:A\rightarrow\N$ and a site~$x\in R$, writing~$i=I\big(x,\,h(x)\big)$ for the next available instruction at~$x$ and~$j=J\big(x,\,\ell(x)\big)$ which is the colour of the next~$x$-loop, we define the step-toppling operator as
$$\Phi_x(R,\,h,\,\ell)
\ =\ \begin{cases}
\big(R\setminus\{x\},\,h+\delta_x,\,\ell\big)
& \text{ if }i=1\,,\\
\big(R\cup\big(\Gamma(x,\,\ell(x),\,j)\cap w(x,\,j)\big),\,h+\delta_x,\,\ell+\delta_x\big)
& \text{ otherwise.}
\end{cases}$$
This operator gives the configuration obtained after performing a step starting at~$x$, and the resulting odometer and loop odometer after the step.

\subsubsection{Toppling procedures and procedure-toppling operator}
For every cluster~$C\subset \calC_0$, we call a~$C$-toppling procedure any function~$f:\calP(C)\setminus\{\varnothing\}\rightarrow C$ such that, for every configuration~$R\subset C$ with~$R\neq\varnothing$, we have~$f(R)\in R$ and~$\dist_C\in R\Rightarrow f(R)=\dist_C$.
The role of a~$C$-toppling procedure is to indicate the order with which sites must be toppled depending on the actual configuration~$R$, \guillemets{without looking into the future}.
It is fundamental for our proof that the choice of the next toppling depends only on the actual configuration restricted to~$C$.
Here,~$R\subset C$ denotes the set of active sites in~$C$ and, as such, defines the configuration of particles inside~$C$.
The condition~$f(R)\in R$ ensures that we topple an active site, while the condition involving~$\dist_C$ means that we give priority to the distinguished vertex, which is toppled as soon as it is awaken.
The priority given to the distinguished vertex is due to the fact that we want this distinguished vertex to be awaken many times and to emit many loops, which would not be the case if it was the last site to be toppled.

Given a cluster~$C\subset \calC_0$ and a fixed~$C$-toppling procedure~$f$, the procedure-toppling operator simply consists in applying the step-toppling operator~$\Phi_x$ defined above at the site~$x$ indicated by the toppling procedure, and doing nothing if~$C$ is already stable:
$$\Phi_C\,:\,\left\{\begin{aligned}
\calP(A)\times\big(\N^{A}\big)^2
&\ \longrightarrow\ 
\calP(A)\times\big(\N^{A}\big)^2\\
(R,\,h,\,\ell)
&\ \longmapsto\ 
\begin{cases}
(R,\,h,\,\ell)
&\text{ if }R\cap C=\varnothing\,,\\
\Phi_{f(R\cap C)}(R,\,h,\,\ell)
&\text{ otherwise.}
\end{cases}
\end{aligned}\right.$$
The dependency in~$f$ is implicit and is omitted to simplify the notation.
For every~$t\in\N$, this operator iterated~$t$ times is simply written~\smash{$\big(\Phi_{C}\big)^{(t)}$}.

\subsection{Our recursive toppling strategy}
\label{subsectionStrategy}

Let~$A\subset\torus$ and let~$(A_j,\,\calC_j)_{j\leqslant {\calJ}}$ be a fixed dormitory hierarchy on~$A$.
We now explain, for every~$j\leqslant {\calJ}$ and every~$C\in\calC_j$, how we proceed to stabilize the set~$C$.

\subsubsection{Stabilization at the~0-th level}
Assume that, for every cluster~$C\in\calC_0$, we have fixed a~$C$-toppling procedure~$f_C$ (these procedures will be constructed in the proof of Lemma~\ref{lemmaInit2Dhigh}, in section~\ref{sectionInit}).

For every~$C\in\calC_0$, to stabilize the set~$C$ we simply use the toppling procedure~$f_C$ until all the sites of~$C$ are asleep.
Since we may need to stabilize this set~$C$ many times, we consider a general stabilization operator starting from a given initial configuration~$R\subset A$ and a certain offset~$h_0,\,\ell_0$ for the odometers.
Indeed, when we perform many stabilizations of various sets of the hierarchy, each stabilization starts from the configuration and the odometers left by the previous stabilizations.
Namely, for every~$C\in\calC_0$, we define the stabilization operator
$$\mathrm{Stab}_C\,:\,\left\{\begin{aligned}
\calP(A)\times\big(\N^{A}\big)^2
&\ \longrightarrow\ 
\calP(A)\times\big(\N^{A}\big)^2\\
(R_0,\,h_0,\,\ell_0)
&\ \longmapsto\ 
(R_\tau,\,h_\tau,\,\ell_\tau)\,,
\end{aligned}\right.$$
where, for every~$t\geqslant 1$, we write
$$(R_t,\,h_t,\,\ell_t)
\ =\ \big(\Phi_C\big)^{(t)}
(R_0,\,h_0,\,\ell_0)\,,$$
with~$\Phi_C$ referring to the procedure-toppling operator using the toppling procedure~$f_C$, and
$$\tau
\ =\ \inf\big\{t\in\N\ :\ R_t\cap C=\varnothing\big\}\,.$$
If~$\tau=+\infty$, the value of~\smash{$\mathrm{Stab}_C(R_0,\,h_0,\,\ell_0)$} can be defined arbitrarily (we do not care about this case since it occurs with probability~$0$).

\subsubsection{The ping-pong rally}
We now construct recursively the stabilization operators for the successive levels of the hierarchy.
Let~$j\in\{1,\,\ldots,\,{\calJ}\}$ be such that the stabilization operator is well defined for every~$C\in\calC_{j-1}$, and let~$C\in\calC_j\setminus\calC_{j-1}$.
By definition of the dormitory hierarchy, we can write~$C=C_0\cup C_1$ with~$C_0,\,C_1\in\calC_{j-1}$.
Let us assume that~$\dist_C=\dist_{C_0}$ (otherwise we swap the notation between~$C_0$ and~$C_1$).

Then, to stabilize the set~$C$, we start by stabilizing~$C_0$, then~$C_1$.
After this, if some sites of~$C_0$ have been reactivated during the stabilization of~$C_1$, we stabilize~$C_0$ once again.
Then, if some sites of~$C_1$ are still active, we stabilize~$C_1$ again, and so on and so forth, alternating between the two sets until both are fully stabilized.

Formally, the stabilization operator for~$C$ is defined as
\begin{equation}
\label{defStab}
\mathrm{Stab}_C\,:\,\left\{\begin{aligned}
\calP(A)\times\big(\N^{A}\big)^2
&\ \longrightarrow\ 
\calP(A)\times\big(\N^{A}\big)^2\\
(R_0,\,h_0,\,\ell_0)
&\ \longmapsto\ 
(R_\tau,\,h_\tau,\,\ell_\tau)\,,
\end{aligned}\right.
\end{equation}
where, for every~$i\in\N$, we write
$$\left\{\begin{aligned}
&(R_{2i+1},\,h_{2i+1},\,\ell_{2i+1})
\ =\ \mathrm{Stab}_{C_0}(R_{2i},\,h_{2i},\,\ell_{2i})\\
&(R_{2i+2},\,h_{2i+2},\,\ell_{2i+2})
\ =\ \mathrm{Stab}_{C_1}(R_{2i+1},\,h_{2i+1},\,\ell_{2i+1})
\end{aligned}\right.$$
and
$$\tau
\ =\ \inf\big\{i\in\N\ :\ R_i\cap C=\varnothing\big\}\,.$$

\subsubsection{Number of topplings and loops during stabilization}
Let~$j\in\{0,\,\ldots,\,{\calJ}\}$ and let~$C\in\calC_j$.
We denote the total number of sleeps and loops performed during the stabilization of~$C$ by
$$\calH(C)
\ =\ \sum_{x\in C} h_{\text{stab}}(x)
\quadou
(R_{\text{stab}},\,h_{\text{stab}},\,\ell_{\text{stab}})
\ =\ \mathrm{Stab}_C(A,\,0,\,0)
\ =\ \mathrm{Stab}_C(C,\,0,\,0)\,.$$
In our main proof, instead of controlling the total number of sleeps and loops used to stabilize, we concentrate on the number of loops performed by the distinguished vertex~$\dist_C$ during the stabilization of~$C$, which we denote by~$\calL(C)=\ell_{\text{stab}}(\dist_C)$, with~$\ell_{\text{stab}}$ defined as above, whereas the number of sleeps is written~$\calS(C)=h_{\text{stab}}(\dist_C)-\ell_{\text{stab}}(\dist_C)$.
Then, among the loops produced by~$\dist_C$ we are interested in the loops of a specific colour.
Thus, for every~$k\in\N$, we introduce the notation
$$\calL(C,\,k)
\ =\ \abs{\Big\{\,\ell<\calL(C)
\ :\ J(\dist_C,\,\ell)=k\,\Big\}}$$
for the number of loops of colour~$k$ emitted by~$\dist_C$ during the stabilization of~$C$ using our toppling strategy.

\subsection{Sufficient condition for activity in terms of the number of sleeps and loops}
\label{subsectionConditionLoops}

Instead of the more general sufficient condition given by Lemma~\ref{lemmaConditionActivityTopplings}, we use the following more specific condition which is adapted to our setting.
Recall that the function~$\psi$ was defined in~\reff{defPsi}.

\begin{lemma}
\label{lemmaConditionActivityLoops}
Let~$d\geqslant 1$, let~$\lambda>0$ and~$\mu\in(0,1)$.
If there exists~$\kappa>\psi(\mu)$ such that, 
for~$n\in\N$ large enough, for every~$A\subset\torus$ with~$|A|=\Ceil{\mu n^d}$, there exists a dormitory hierarchy~$(A_j,\,\calC_j)_{j\leqslant {\calJ}}$ and a toppling procedure~$f_C$ for every~$C\in\calC_0$ such that, with the recursive toppling strategy defined in section~\ref{subsectionStrategy}, we have the stochastic domination
\begin{equation}
\label{goalStochDom}
\calH(A_{\calJ})
\ \succeq\ \mathrm{Geom}\big(\exp(-\kappa\,n^d)\big)\,,
\end{equation}
then we have~$\mu\geqslant \mu_c(\lambda)$, where~$\mu_c(\lambda)$ is the critical density of the usual ARW model on~$\Z^d$.
\end{lemma}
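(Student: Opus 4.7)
I would deduce this from Lemma~\ref{lemmaConditionActivityTopplings}: its hypothesis~\reff{conditionMA} will follow from the hypothesis~\reff{goalStochDom} of the present lemma, via a pathwise coupling that transfers the geometric domination on $\calH(A_{\calJ})$ to a corresponding domination on $M_A$, together with a routine conversion of a geometric tail into the required exponential bound. Throughout, I fix $A \subset \torus$ with $|A| = \Ceil{\mu n^d}$, a dormitory hierarchy and toppling procedures as provided by the hypothesis.

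\textbf{Step 1: a pathwise monotone coupling.} I would couple the coloured dynamics defining $\calH(A_{\calJ})$ with the standard modified ARW of $\calP^{\lambda, A}_\mu$ started from $\eta_0 = \mathbf{1}_A$ by having both read from the same arrays $(I, J, \Gamma)$ of section~\ref{subsectionLoopRep}. Both dynamics preserve the invariant of exactly one particle per site of $A$, so sleep steps at a common toppled site $x$ using the same instruction $I(x, h(x))$ have identical effect. The loop step at $x$ of common colour $j = J(x, \ell(x))$ and common support $\Gamma(x, \ell(x), j)$ wakes all particles on $\Gamma$ in the standard dynamics but only those on $\Gamma \cap w(x, j)$ in the coloured one. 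Running both dynamics along the same sequence of toppled vertices --- namely that prescribed by the recursive strategy of section~\ref{subsectionStrategy} in the coloured dynamics --- an induction on the step number shows that the active set in the coloured dynamics is at all times contained in the active set in the standard one. In particular, every vertex toppled by the coloured dynamics is also active in the standard one, so the sequence is a legal partial toppling sequence for the standard ARW.

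\textbf{Step 2: transferring the domination and applying Lemma~\ref{lemmaConditionActivityTopplings}.} By the abelian property of the standard modified ARW, any extension of this partial sequence into a full stabilization uses exactly $M_A$ topplings in total, so pointwise under the coupling $M_A \geqslant \calH(A_{\calJ})$, and hence
$$
M_A \ \succeq\ \mathrm{Geom}\bigl(\exp(-\kappa n^d)\bigr)\,.
$$
The elementary inequality $\Proba(\mathrm{Geom}(p) \leqslant t) \leqslant t p$ gives, for every $a > 0$,
$$
\calP^{\lambda, A}_\mu\bigl(M_A \leqslant e^{a n^d} \bigm| \eta_0 = \mathbf{1}_A\bigr)
\ \leqslant\ e^{-(\kappa - a) n^d}\,.
$$
Since $\kappa > \psi(\mu)$, I can pick any $a \in (0, \kappa - \psi(\mu))$ and set $b = \kappa - a$; then $b > \psi(\mu)$ and~\reff{conditionMA} holds uniformly over all $A$ with $|A| = \Ceil{\mu n^d}$ for $n$ large enough. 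Lemma~\ref{lemmaConditionActivityTopplings} then yields $\mu \geqslant \mu_c(\lambda)$.

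\textbf{Main difficulty.} The delicate part is Step~1: although the coloured dynamics is not abelian as an autonomous process (because whether a sleeping particle is activated depends on the identity of its visitor), one must check that, when realised jointly with the standard ARW on the common array $(I, J, \Gamma)$, the step-by-step containment of active sets genuinely passes through both sleep and loop updates, and hence that the coloured toppling sequence is always admissible in the standard dynamics. Once this monotone coupling is set up, the geometric tail bound and the appeal to Lemma~\ref{lemmaConditionActivityTopplings} are entirely routine.
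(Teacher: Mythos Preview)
Your proposal is correct and follows essentially the same approach as the paper's proof: a monotone coupling showing that the coloured toppling sequence is legal for the standard $A$-ARW (hence $M_A\succeq\calH(A_{\calJ})$), then the elementary geometric tail bound to recover condition~\reff{conditionMA}, and finally Lemma~\ref{lemmaConditionActivityTopplings}. The only cosmetic difference is that the paper builds the coupling in the opposite direction, starting from a stack of individual jump/sleep instructions~$\tau$ distributed under~$\calP^{\lambda,A}$ and \emph{reading off} $(I,J,\Gamma)$ while executing the coloured strategy, which makes it immediate that the underlying standard ARW has law~$\calP^{\lambda,A}$; in your direction one should also record the actual loop paths (not just their supports~$\Gamma$) to reconstruct~$\tau$, and note that $\calH(A_{\calJ})$ counts sleep/loop \emph{steps} whereas $M_A$ counts individual instructions on~$A$, the latter being at least the former since every step uses at least one instruction at its starting site in~$A$.
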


The proof of this Lemma, which relies on the previous sufficient condition given by Lemma~\ref{lemmaConditionActivityTopplings}, is deferred to the appendix~\ref{sectionProofLemmaCondition}.

Note that in our construction with loops, the number of loops performed during stabilization is not {\it abelian}: it depends on the toppling strategy.
Indeed, once the sleep instructions~$I$, the colours~$J$ and the loops~$\Gamma$ (which altogether enclose all the randomness in our model) are drawn, one can obtain different numbers of loops depending on the order with which the loops are used, and even the number of sleeps and loops or the total odometer summing the number of topplings performed along the loops are not abelian.

However, having fixed our toppling strategy,~$\calH(A_{\calJ})$, the total number of sleeps and loops performed to stabilize, is stochastically dominated by~$M_A$, the number of toppling instructions used on the sites of~$A$ during stabilization in the \guillemets{original} ARW model (where sleep instructions out of~$A$ are ignored but no waking up events are ignored).

\subsection{Correlation between the numbers of loops of each colour}

The following result relates the number of loops of colour~$j$ performed by the distinguished vertex of a cluster~$C\in\calC_j$ with the number of sleeps and loops of colour at most~$j-1$:

\begin{lemma}
\label{lemmaNextColour}
Let~$d,\,n\geqslant 1$, let~$\lambda>0$, let~$A\subset\torus$, let~$(A_j,\,\calC_j)_{j\leqslant {\calJ}}$ be a dormitory hierarchy on~$A$ and for every~$C\in\calC_0$, let~$f_C$ be a toppling procedure on~$C$.
Then, for every~$j\in\{0,\,\ldots,\,{\calJ}\}$ and every cluster~$C\in\calC_j$, we have the equality in distribution
\begin{equation}
\label{sumGeomLoops}
\calL(C,\,j)
\ \stackrel{\text{d}}{=}\ \sum_{i=1}^{\calT}(X_i-1)\,,
\end{equation}
where~$\calT=\calS(C)+\calL(C,\,0)+\cdots+\calL(C,\,j-1)$ and~$(X_i)_{i\geqslant 1}$ are i.i.d.\ geometric random variables with parameter
\begin{equation}
\label{paramNextColour}
\frac{\lambda+1-2^{-j}}
{\lambda+1-2^{-(j+1)}}\,,
\end{equation}
which are independent of~$\calT$.
\end{lemma}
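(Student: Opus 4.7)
My plan is to exploit the identity $p_V=p_W$ — where V denotes a color-$j$ loop and W a color-$k$ loop with $k>j$ at $\dist_C$ — by first observing that within the stabilization of $C\in\calC_j$, both V and W instructions from $\dist_C$ are \emph{invisible} to the dynamics. Indeed, since $C=C_j(\dist_C)$, the definition~\reff{defW} of $w$ forces $w(\dist_C,k)\cap C=\varnothing$ for every $k\geqslant j$ (either $w(\dist_C,k)$ is empty or it lies in $C_{k+1}(\dist_C)\setminus C_k(\dist_C)$, which is disjoint from $C$). Since $\mathrm{Stab}_C$ only topples sites of $C$ and each base-level procedure $f_D$ only inspects $R\cap D$ with $D\in\calC_0$ a subset of $C$, such an instruction leaves $\dist_C$ active and changes nothing inside $C$: it is a pure null step.

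Granted this invisibility, I would classify each i.i.d.\ instruction slot at $\dist_C$ into three types U (sleep or color-$k$ loop with $k<j$), V, and W, with probabilities $p_U=(\lambda+1-2^{-j})/(\lambda+1)$ and $p_V=p_W=1/((\lambda+1)2^{j+1})$. The measurability claim I would establish next is that $\calT$ is a function of the U-subsequence alone (i.e.\ of the U-subtype at each U-slot and, when that subtype is a low-color loop, its support) together with the independent randomness at the other sites of $A$; in particular $\calT$ is independent of the V/W identities of the non-U slots at $\dist_C$. I would also need the almost sure fact that the last U used at $\dist_C$ is a sleep and that no instruction beyond position $P_\calT$ (the position of the $\calT$-th U) is consumed: otherwise $\dist_C$ would remain active and be toppled again, producing either a $(\calT+1)$-th U (contradicting maximality of $\calT$) or an infinite tail of V's and W's (an event of probability zero).

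Combining these two facts, $\calL(C,j)$ equals the number of V-slots among positions $1,\ldots,P_\calT$ of the instruction sequence at $\dist_C$. The inter-arrival times $X_i$ between consecutive U-positions are i.i.d.\ $\mathrm{Geom}(p_U)$ and are independent of the U-subtypes and of the supports, hence of $\calT$. Conditional on these gaps, each of the $\sum_{i=1}^{\calT}(X_i-1)$ non-U slots up to $P_\calT$ is independently V with probability $p_V/(p_V+p_W)=1/2$, so that
\begin{equation*}
\calL(C,j)\ \stackrel{\mathrm{d}}{=}\ \sum_{i=1}^{\calT}Y_i\,,
\end{equation*}
where $(Y_i)_{i\geqslant 1}$ are i.i.d.\ compound variables distributed as $\mathrm{Bin}(X-1,1/2)$ with $X\sim\mathrm{Geom}(p_U)$, and are independent of $\calT$. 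A short direct computation shows $Y_i+1\sim\mathrm{Geom}\bigl(2p_U/(1+p_U)\bigr)$, and simplifying $2p_U/(1+p_U)$ yields exactly $(\lambda+1-2^{-j})/(\lambda+1-2^{-(j+1)})$, as in~\reff{paramNextColour}.

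The main difficulty is to justify the invisibility and measurability statements rigorously through the ping-pong recursion defining $\mathrm{Stab}_C$: one needs to verify that no intermediate step of the recursive stabilization ever inspects or depends on the V or W loops emitted by $\dist_C$. This ultimately follows from the fact that each sub-operation only reads the restriction of the configuration to a sub-cluster of $C$, combined with the independence of the supports $\Gamma(\dist_C,\ell,k)$ across $k$ built into the probability space of section~\ref{subsectionLoopRep}. Once these are in place, the identity above reduces the proof to the standard geometric compounding calculation.
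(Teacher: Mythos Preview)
Your proof is correct and follows essentially the same strategy as the paper's: both hinge on the observation that loops of colour $\geqslant j$ emitted by $\dist_C$ are null steps for the stabilization of $C$, so that $\calT$ is independent of how many colour-$j$ loops occur between consecutive ``useful'' topplings. The only difference is cosmetic: the paper introduces an auxiliary exponential-clock construction so that the number of colour-$j$ loops between two useful topplings is \emph{directly} seen to be a shifted geometric with the parameter~\reff{paramNextColour}, whereas you first count all non-U slots (geometric with parameter $p_U$) and then thin by $p_V/(p_V+p_W)=1/2$, recovering the same geometric law via the identity $2p_U/(1+p_U)=(\lambda+1-2^{-j})/(\lambda+1-2^{-(j+1)})$.
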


This Lemma can be easily understood if one thinks of a continuous-time variant of our model where particles fall asleep at rate~$p_\s=\lambda$ and perform loops of colour~$j$ with rate~\smash{$p_j=2^{-(j+1)}$}.
Then, the number of loops of colour~$j$ between any two topplings counted in~$\calT$ is a geometric minus one, with parameter
\begin{equation}
\label{computationParamXi}
\frac{p_\s+p_0+\cdots+p_{j-1}}
{p_\s+p_0+\cdots+p_{j-1}+p_j}
\ =\ \frac{\lambda+2^{-1}+\cdots+2^{-j}}
{\lambda+2^{-1}+\cdots+2^{-(j+1)}}
\ =\ \frac{\lambda+1-2^{-j}}
{\lambda+1-2^{-(j+1)}}\,,
\end{equation}
and the independence between~$\calT$ and these geometric variables follows from the fact that the loops with colour~$j$ have no impact on the stabilization of~$C$.
See the proof in the appendix~\ref{sectionProofLemmaNextColour} for more details.

\subsection{A useful property of geometric random variables}

We now state a technical Lemma which is proved in the appendix~\ref{sectionProofLemmaSumGeometrics}.

\begin{lemma}\label{lemmaSumGeometrics}
Let~$N$ be a geometric random variable with parameter~$a\in(0,1)$, and let~$(X_n)_{n\in\N}$ be i.i.d.\ geometric variables with parameter~$b\in(0,1)$, independent of~$N$.
Then, the variable
$$
S\ =\ 1+\sum_{n=1}^N \big(X_n-1\big)\qquad
\text{is geometric with parameter}\qquad
\frac{ab}{1-b+ab}\,.$$
\end{lemma}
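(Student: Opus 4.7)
The plan is to compute the probability generating function (PGF) of~$S$ and recognize it as the PGF of a geometric variable with the claimed parameter.

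First, I would fix the convention that a geometric variable with parameter~$p$ takes values in~$\{1,2,\ldots\}$ with~$\Proba(X=k)=p(1-p)^{k-1}$ (consistent with the fact that~$X_n-1\geqslant 0$ and~$S\geqslant 1$ in the statement). Under this convention, a direct computation gives~$G_N(z)=az/(1-(1-a)z)$ and~$G_{X-1}(z)=b/(1-(1-b)z)$. Since~$N$ is independent of the sequence~$(X_n)$, the standard composition rule for PGFs of random sums of i.i.d.\ summands yields that the PGF of~$\sum_{n=1}^N(X_n-1)$ equals~$G_N\circ G_{X-1}$, and multiplying by~$z$ accounts for the leading~$+1$ in~$S$.

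Next, a short algebraic simplification collapses the compound fraction. Plugging~$G_{X-1}(z)$ into~$G_N$ and clearing denominators gives
$$
G_S(z)\ =\ z\cdot\frac{a\,G_{X-1}(z)}{1-(1-a)G_{X-1}(z)}
\ =\ \frac{abz}{(1-b+ab)-(1-b)z}\,.
$$
I would then compare this with~$pz/(1-(1-p)z)$, the PGF of a geometric variable with parameter~$p$: setting~$p=ab/(1-b+ab)$, one checks immediately that~$1-p=(1-b)/(1-b+ab)$, so the two PGFs coincide, proving the claim.

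There is no serious obstacle here; the only point that requires care is keeping the conventions consistent, in particular distinguishing the geometric law on~$\{1,2,\ldots\}$ from its shifted version on~$\{0,1,\ldots\}$, since the statement mixes both through the~$X_n-1$ terms. As a sanity check I would verify~$\Proba(S=1)$ by hand: the event~$\{S=1\}$ coincides with~$\{X_1=\cdots=X_N=1\}$, and conditioning on~$N$ yields~$\sum_{k\geqslant 1}a(1-a)^{k-1}b^k=ab/(1-(1-a)b)=ab/(1-b+ab)$, matching the parameter~$p$.
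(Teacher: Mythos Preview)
Your proof is correct and follows essentially the same approach as the paper: both compute a transform of~$S$ via the composition rule for random sums and identify the result as the transform of a geometric law with the claimed parameter. The only cosmetic difference is that you use probability generating functions while the paper uses characteristic functions (with a slight abuse of notation when composing); your PGF version is arguably cleaner, and your direct sanity check of~$\Proba(S=1)$ is a nice addition.
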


\subsection{Hitting probabilities on the torus}
\label{subsectionHitting}

We need to introduce a key function, which measures the chance to wake up a distant site in a loop on the torus~$\torus$, for~$d\geqslant 1$:
\begin{equation}
\label{defUpsilon}
\Upsilon_d\,:\,r\in\N\setminus\{0\}
\ \longmapsto\ 
\inf\,\Big\{\,P_x\big(T_y<T_x^+\big)\,,\ 
n\in\N,\ x,\,y\in\torus\ :\ d(x,\,y)\leqslant r\,\Big\}\,.
\end{equation}

We use the following estimates on this function:

\begin{lemma}
\label{lemmaHarnack}
We have the lower bounds:
\begin{itemize}
\item In dimension~$d=1$, we have~$\Upsilon_1(r)= 1/(2r)$ for every~$r\geqslant 1$;
\item In dimension~$d=2$, there exists~$K>0$ such that~$\Upsilon_2(r)\geqslant K/\ln r$ for every~$r\geqslant 2$;
\item In dimension~$d\geqslant 3$, there exists~$K=K(d)>0$ such that~$\Upsilon_d(r)\geqslant K$ for every~$r\geqslant 1$.
\end{itemize}
\end{lemma}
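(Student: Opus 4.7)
The approach rests on the classical identity from electrical network theory,
$$P_x\bigl(T_y<T_x^+\bigr)\ =\ \frac{1}{2d\cdot R_{\mathrm{eff}}(x,y)},$$
where $R_{\mathrm{eff}}$ denotes the effective resistance on $\torus$ equipped with unit conductances (each vertex has degree $2d$, and the event $\{T_y<T_x^+\}$ depends only on the embedded discrete-time chain). This reduces the lemma to producing, in each dimension, an upper bound on $R_{\mathrm{eff}}(x,y)$ that is uniform in the torus size $n$ as well as in the choice of a pair at distance at most $r$.

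In dimension $d=1$, the network between $x$ and $y$ with $d(x,y)=s\leq r$ is just two parallel paths of lengths $s$ and $n-s$, so $R_{\mathrm{eff}}(x,y)=s(n-s)/n$. The inequality $n/\bigl(2s(n-s)\bigr)\geq 1/(2r)$ is equivalent to $n(r-s)+s^2\geq 0$, which is trivially satisfied, and the bound $1/(2r)$ is attained in the limit $s=r$, $n\to\infty$, yielding the exact value $\Upsilon_1(r)=1/(2r)$.

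For $d\geq 2$ I would bound $R_{\mathrm{eff}}(x,y)$ by Thomson's principle, producing a unit flow from $x$ to $y$ on $\torus$ with controlled energy, splitting on the size of $n$. If $n\geq 4r$, lift $x,y$ to $\tilde x,\tilde y\in\Z^d$ with $|\tilde x-\tilde y|_\infty\leq r$ and use the standard shell-by-shell unit flow on $\Z^d$: spread the mass uniformly on each cubical shell around $\tilde x$, route it to the next shell, and collect it at $\tilde y$. Such a flow can be arranged to sit inside a box of radius $\leq 2r$ around $\tilde x$, with total energy $O(\ln r)$ in $d=2$ and $O(1)$ in $d\geq 3$, because the shell at distance $k$ has $\Theta(k^{d-1})$ vertices and contributes $O(1/k^{d-1})$ to the energy. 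Since the support sits in a fundamental domain when $n\geq 4r$, projection to the torus preserves the energy and gives the required flow. If $n<4r$, I would carry out the same shell construction intrinsically on $(\Z/n\Z)^d$ around $x$, stopping at the shell that contains $y$; the energy is then $O(\ln n)\leq O(\ln r)$ in $d=2$ and $O(1)$ in $d\geq 3$.

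The hard part will be precisely this regime $n<4r$: the shells wrap around the torus and the $\Z^d$ flow no longer fits in one fundamental domain, so one must reason directly on $\torus$. The shell cardinality $\bigl|\{y\in\torus:d(x,y)=k\}\bigr|=\Theta(k^{d-1})$ remains valid up to $k\leq n/2$, and truncating the construction at that scale only costs an $O(1)$ factor in the energy since $n<4r$. Feeding the resulting upper bounds on $R_{\mathrm{eff}}$ back into the electrical identity then delivers the announced lower bounds on $\Upsilon_d(r)$.
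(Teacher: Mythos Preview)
Your approach via the effective-resistance identity and Thomson's principle is correct and takes a genuinely different route from the paper. For $d\geqslant 2$ the paper does not build flows at all: it takes a midpoint $m$ between lifts $a,b\in\Z^d$ of $x,y$, notes that $z\mapsto P_{\pi_n(m+z)}(T_y<T_x)$ is harmonic on a cubical annulus $U_r$ of scale $r$ around $m$, and combines Harnack's inequality with the symmetry $f(-z)=1-f(z)$ to force $\inf_{K_r}f\geqslant C/2$ on a compact sub-annulus $K_r$. Then $P_x(T_y<T_x^+)\geqslant(\inf_{K_r}f)\cdot P_x\big(T_{\pi_n(m+K_r)}<T_x^+\big)\geqslant(C/2)\,u_{2r}$, with $u_r$ the box-escape probability on $\Z^d$, whose asymptotics are classical. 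What makes the torus-to-$\Z^d$ comparison go through with \emph{no} case split on $n$ versus $r$ is that, since $r\leqslant n/2$, the annulus $m+K_r$ topologically separates $a$ from every nontrivial translate $a+nk$; hence any $\Z^d$-excursion from $a$ that reaches $m+K_r$ before returning to $a$ projects to a torus-excursion from $x$ that reaches $\pi_n(m+K_r)$ before returning to $x$. For $d=1$ the paper simply invokes gambler's ruin, which is exactly your parallel-resistor computation.

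Your route trades Harnack for an explicit flow, which is more elementary but forces the case analysis you describe. The one loose thread is the $n<4r$ branch: ``stopping at the shell that contains $y$'' leaves the mass spread over that whole shell rather than at $y$, so you still owe a collecting half, just as you (correctly) include one in the $n\geqslant 4r$ case. Once that is added --- e.g.\ spread from $x$ and from $y$ separately out to the uniform measure on $\torus$ and subtract --- each half has energy $\sum_{k\leqslant n/2}O(k^{1-d})$, hence $O(\ln n)\leqslant O(\ln r)$ in $d=2$ and $O(1)$ in $d\geqslant 3$, and your bound follows.
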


The proof of this Lemma is deferred to the appendix~\ref{sectionProofLemmaHarnack}.

\section{Proof of Theorem~\ref{thm2D}}
\label{section2D}

This section is devoted to the proof of our main result which gives upper bounds on the critical density in two dimensions for small and large sleep rates.
The arguments rely on several intermediary Lemmas, but we postpone the proofs of these Lemmas to later sections, to allow the reader to grasp the articulation of the proof.

\subsection{Canvas for the inductive proof}
\label{subsectionCanvas}

Recall that we rely on the sufficient condition for activity given by Lemma~\ref{lemmaConditionActivityLoops}.
Thus, for~$\lambda>0$ and~$\mu\in(0,1)$ fixed, we reason with~$n\in\N$ a large integer and~$A\subset\Z_n^2$ fixed with~$|A|=\Ceil{\mu n^d}$, and our goal is to prove the stochastic domination~\reff{goalStochDom}, for a constant~$\kappa>0$ independent of~$n$ and~$A$.

To prove this stochastic domination, we proceed inductively on the {\it levels} of the dormitory hierarchy to show that at each level~$j$, for every~$C\in\calC_j$, the number of loops of colour~$j$ produced when stabilizing~$C$ dominates an explicit geometric random variable.
This is the key technical part of the paper, and we divide the proof into an initialization and an inductive step.

Before performing the induction, we need to construct a dormitory hierarchy~$(A_j,\,\calC_j)_{j\leqslant {\calJ}}$ and to choose a toppling procedure~$f_C$ for each set~$C\in\calC_0$.
For this first step we proceed differently depending on the regime of~$\lambda$ considered, hence we postpone it to the next two sections.

\subsubsection{The induction hypothesis} 
Let~$(\alpha_j)_{j\in\N}$ be a sequence of positive real numbers, to be chosen later (depending on the regime of~$\lambda$ under consideration).
Our induction hypothesis, written~$\calP(j)$, is the following: for every~$j\in\{0,\,\ldots,\,{\calJ}\}$, we define
$$\calP(j)\ :\qquad
\forall\,C\in\calC_j\qquad
1+\calL(C,\,j)
\ \succeq\ \mathrm{Geom}\big(\exp(-\alpha_j |C|)\big)\,.$$
Note that~$\calP(j)$ is an estimate on the whole law of the number of loops: the small and large values need to be controlled on all scales.

We are now ready to state the key inductive Lemma, whose proof occupies section~\ref{sectionInduction}.

\begin{lemma}
\label{lemmaInduction}
Let~$d\geqslant 1$,~$\lambda>0$,~$v\geqslant 1$,~\smash{$(D_j)_{j\in\N}\in(\N\setminus\{0\})^{\N}$}, and let~$(\alpha_j)_{j\in\N}$ be a sequence of positive real numbers such that
\begin{equation}
\label{conditionInduction}
\forall j\in\N\qquad
\frac{4v(1+\lambda) 2^{3j/2}}
{\big(1-e^{-\alpha_j v}\big)\Upsilon_d(D_{j})}
\ \leqslant\ \exp\Big((\alpha_j-\alpha_{j+1})2^{j/2}v\Big)\,.
\end{equation}
For every~$n\geqslant 1$ and every~$A\subset\torus$ equipped with a~$(v,\,D)$-dormitory hierarchy~$(A_j,\,\calC_j)_{j\leqslant {\calJ}}$ and with a collection of toppling procedures~$(f_C)_{C\in\calC_0}$, if the property~$\calP(0)$ holds, then~$\calP(j)$ also holds for every~$j\leqslant {\calJ}$.
\end{lemma}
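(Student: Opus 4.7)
I will proceed by induction on $j\in\{0,\ldots,\calJ\}$; the base case $\calP(0)$ is given, so I focus on the inductive step. Assume $\calP(j)$ and fix $C\in\calC_{j+1}$. The first step is a distributional reduction using Lemma~\ref{lemmaNextColour} at level $j+1$: it writes $\calL(C,j+1)\stackrel{\text{d}}{=}\sum_{i=1}^{\calT}(X_i-1)$, where $\calT = \calS(C)+\sum_{k\leq j}\calL(C,k)$ and the $X_i$ are i.i.d.\ geometrics of parameter $p = (\lambda+1-2^{-(j+1)})/(\lambda+1-2^{-(j+2)})$, independent of $\calT$. This reduces the task to producing a stochastic lower bound $1+\calT\succeq\mathrm{Geom}(e^{-\beta|C|})$ for a suitable $\beta$; Lemma~\ref{lemmaSumGeometrics} then combines the two into $1+\calL(C,j+1)\succeq\mathrm{Geom}(q)$ with $q = e^{-\beta|C|}\,p/(1-p+e^{-\beta|C|}\,p)$. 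Because $1-p = 2^{-(j+2)}/(\lambda+1-2^{-(j+2)})$, the desired $q\leq e^{-\alpha_{j+1}|C|}$ reduces to an inequality of the form $2^{j+2}(1+\lambda)\leq e^{(\beta-\alpha_{j+1})|C|}$ up to constants, which is exactly what hypothesis~\reff{conditionInduction} is tailored to furnish, combined with $|C|\geq 2^{\lceil(j+1)/2\rceil}v$.

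To bound $\calT$ from below I will use $\calT\geq\calL(C,j)$ and split into two cases. If $C\in\calC_j\cap\calC_{j+1}$ (inherited cluster), the operator $\mathrm{Stab}_C$ is unchanged from level $j$ and $\calP(j)$ applied to $C$ gives $1+\calL(C,j)\succeq\mathrm{Geom}(e^{-\alpha_j|C|})$ directly. The substantive case is $C = D\cup E\in\calC_{j+1}\setminus\calC_j$ with $\dist_C=\dist_D$: here the \emph{ping-pong rally} built into $\mathrm{Stab}_C$ must boost the exponent from $\alpha_j|D|$ up to essentially $\alpha_j(|D|+|E|)$. The vertex $\dist_C$ is toppled only inside the $\mathrm{Stab}_D$ rounds, and its colour-$j$ loops are exactly those authorized to wake up sites of $E = C_{j+1}(\dist_D)\setminus C_j(\dist_D)$. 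The first $\mathrm{Stab}_D$ call starts with $D$ fully active, so $\calP(j)$ applied to $D$ gives $1+L_1\succeq\mathrm{Geom}(e^{-\alpha_j|D|})$ for its contribution $L_1$; each such loop, being a random-walk excursion between points at distance at most $\diam C\leq D_j$, hits $\dist_E$ with probability at least $\Upsilon_d(D_j)$. Once $\dist_E$ is awakened, the priority rule of $f_E$ forces it to be toppled at once in the next $\mathrm{Stab}_E$ call, and a symmetric analysis produces colour-$j$ loops from $\dist_E$ reactivating $\dist_D$ with probability $\geq\Upsilon_d(D_j)$ each, extending the rally.

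The plan is then to iterate this alternation and splice, via Lemma~\ref{lemmaSumGeometrics}, the successive geometric contributions of the cycles into a single geometric domination $1+\calL(C,j)\succeq\mathrm{Geom}(e^{-\beta|C|})$, with $\beta$ close to $\alpha_j$ up to logarithmic corrections in $\Upsilon_d(D_j)^{-1}$, $(1+\lambda)$, $2^j$ and $(1-e^{-\alpha_j v})^{-1}$ --- exactly the corrections absorbed into the gap $(\alpha_j-\alpha_{j+1})2^{j/2}v$ on the right-hand side of~\reff{conditionInduction}. The main obstacle I anticipate is that subsequent $\mathrm{Stab}_D$ and $\mathrm{Stab}_E$ rounds do not begin with the corresponding cluster fully active, so $\calP(j)$ cannot be re-invoked round by round; only the priority rule guarantees that a reactivated distinguished vertex is toppled immediately, and the contribution of each such toppling must then be extracted through the geometric decomposition of sleep-vs.-loop events at the vertex itself. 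This is where the coloured-loop construction of section~\ref{subsectionColouredLoops} becomes indispensable: it decouples the level-$(j+1)$ loops at $\dist_D$ from all the randomness driving the ping-pong dynamics, so that the splicing proceeds with genuinely independent geometric variables and no intricate correlations to unravel.
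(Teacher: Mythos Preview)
Your reduction via Lemma~\ref{lemmaNextColour} and the split into the cases $C\in\calC_j$ and $C=D\cup E\in\calC_{j+1}\setminus\calC_j$ match the paper, but the ping-pong analysis has a real gap. You only require the colour-$j$ loops from~$\dist_D$ to hit the single vertex~$\dist_E$, and then propose to harvest each subsequent round from ``the geometric decomposition of sleep-vs.-loop events at the vertex itself''. This cannot produce the exponent~$\alpha_j|E|$ you need: if only~$\dist_E$ is active at the start of $\mathrm{Stab}_E$, there is no way to invoke~$\calP(j)$ for~$E$, and the toppling of~$\dist_E$ alone delivers a number of colour-$j$ loops that is geometric with a parameter depending only on~$\lambda$ and~$j$, not with parameter~$e^{-\alpha_j|E|}$. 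The rally then degenerates into a bounded-step Markov chain on the pair of distinguished vertices, and splicing its rounds yields at best $1+\calL(C,j)\succeq\mathrm{Geom}\big(c\,e^{-\alpha_j|D|}\big)$, never the product $e^{-\alpha_j|D|}\,e^{-\alpha_j|E|}=e^{-\alpha_j|C|}$ that the induction requires.

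The paper closes this gap by demanding \emph{full} reactivation: it sets~$\calN$ to be the number of rounds in which the colour-$j$ loops cover the entire partner cluster, so that each of the first~$2\calN$ sub-stabilizations starts fully active and~$\calP(j)$ legitimately applies to every one of them. The failure probability per round is then controlled by a union bound over all target sites, $1-p_k\leqslant |C_{1-k}|\,q_k/\Upsilon_d(D_j)$; this is where the factor~$|C|$ (hence the~$v\,2^{j/2}$ inside the~$4v\,2^{3j/2}$ of~\reff{conditionInduction}) comes from. To handle the dependence between~$\calN$ and the loop counts one replaces the true sequence~$(\calL_i)$ by a ``Sisyphus'' sequence~$(\calL'_i)$ in which every round is forcibly restarted fully active, checks that $\calL'_i=\calL_i$ for $i<2\calN$ and that $\calN'=\calN$, and then uses the independence between~$(\calL'_i)$ and the \emph{shapes} of the colour-$j$ loops (your last paragraph has the right instinct but the wrong colour: it is level~$j$, not~$j{+}1$) to see that, conditionally on~$(\calL'_i)$, the reactivation events factorize. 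A short positive-correlation argument then allows~$\calN'$ to be replaced by an independent geometric copy, after which Lemma~\ref{lemmaSumGeometrics} applies exactly as you indicate.
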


Note that the above condition implies that the sequence~$(\alpha_j)_{j \in \N}$ is decreasing, meaning that the parameter in the exponential gets smaller at each step.
However, as we will see, we are able to choose these parameters such that~$\alpha_j$ does not tend to~$0$ when~$j\to\infty$.

We now conclude the proof of our bounds on~$\mu_c(\lambda)$ in dimension~$2$ in the two regimes of the parameter~$\lambda$.

\subsection{Low sleep rate: concluding proof of the bound~\reff{bound2Dlow}}
\label{proofBound2Dlow}

We now explain how to combine the ingredients to prove the upper bound~\reff{bound2Dlow} on the critical density when~$\lambda\to 0$.
Recall the definition of dormitory hierarchy which was given in section~\ref{subsectionDormitory}.
In this setting, we rely on the dormitory hierarchy given by the following Lemma:

\begin{lemma}
\label{lemmaDefHierarchy2Dlow}
Let~$d=2$, let~$D_0\geqslant 1$ and let~$D_j=6^jD_0$ for every~$j\geqslant 1$.
For every~$n\geqslant 1$ and every~$A\subset\Z_n^2$ with~$|A|\geqslant 288\,n^2/(D_0)^2$, there exists~${\calJ}\in\N$ and a~$(1,\,D)$-dormitory hierarchy~$(A_j,\,\calC_j)_{j\leqslant {\calJ}}$ on~$A$, with~$|A_{\calJ}|\geqslant |A|-144 n^2/(D_0)^2$ and where~$\calC_0$ contains only singletons.
\end{lemma}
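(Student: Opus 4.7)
The plan is to build the hierarchy by iterative pair-merging inside nested tilings of $\Z_n^2$ at geometrically growing scales. Start with $\calC_0=\{\{x\}:x\in A\}$, so that $A_0=A$; condition \textit{(\ref{conditionCard})} is trivial at level $0$ since $v=1$ and $2^0=1$. Given $(A_j,\calC_j)$, partition $\Z_n^2$ into boxes of side $a_j=\lfloor D_j/2\rfloor$, assign each cluster of $\calC_j$ to the unique box containing its lexicographically smallest point, and inside each box pair up the assigned clusters greedily. Each pair forms a new element of $\calC_{j+1}$; an unpaired cluster is retained in $\calC_{j+1}$ when its size is at least $2^{\lfloor(j+1)/2\rfloor}$ and is otherwise dropped. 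The process stops at the smallest $\calJ$ for which $|\calC_{\calJ}|=1$; this level is reached because as soon as $D_j\geqslant 2n$ a single box covers the torus, so all surviving clusters get merged within a bounded number of further steps.

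A simple induction on $j$ shows that every $C\in\calC_j$ has $\diam C\leqslant D_{j-1}$ (with $D_{-1}:=0$): such a $C$ is either inherited from $\calC_{j-1}$ (hence with even smaller diameter) or the union of two clusters of $\calC_{j-1}$ sharing a level-$(j-1)$ box, in which case its diameter is at most $a_{j-1}+2D_{j-2}\leqslant D_{j-1}/2+2D_{j-1}/6=5D_{j-1}/6<D_{j-1}$, so condition \textit{(\ref{conditionMerge})} is satisfied; condition \textit{(\ref{conditionCard})} is built into the construction and \textit{(\ref{conditionFinal})} is the stopping rule. The heart of the argument is the bound on the total number of lost sites. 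A cluster can be dropped only at an odd-to-even transition $j\to j+1$ with $j=2k+1$, since only then does the threshold $2^{\lfloor j/2\rfloor}$ jump (from $2^k$ to $2^{k+1}$); at such a transition a discarded cluster has size strictly less than $2^{k+1}$. Greedy pairing leaves at most one unpaired cluster per box and the number of boxes is of order $n^2/D_j^2$, so the total loss is bounded by a geometric series of the form
\[
\sum_{k\geqslant 0}\frac{C\,n^2\cdot 2^{k+1}}{D_{2k+1}^2}\;=\;\frac{C\,n^2}{D_0^2}\sum_{k\geqslant 0}\frac{2^{k+1}}{36^{2k+1}}\,,
\]
whose value is a small constant multiple of $n^2/D_0^2$, well below $144\,n^2/D_0^2$.

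The main obstacle is bookkeeping rather than any conceptual subtlety. Because clusters may straddle tile boundaries, the assignment rule (lex-min point) must be fixed once and for all so that the induction on diameters stays consistent; since cluster diameters stay a factor at least six smaller than the tile side, there is ample tolerance. One should also keep the constants explicit to secure the precise constant $144$, but the geometric series above is already so small that no fine optimization is needed. Combined with the hypothesis $|A|\geqslant 288\,n^2/D_0^2$, the loss estimate yields $|A_{\calJ}|\geqslant|A|-144\,n^2/D_0^2\geqslant 144\,n^2/D_0^2>0$, so $\calC_{\calJ}$ is indeed a single nonempty cluster as required.
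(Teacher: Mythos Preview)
Your construction differs from the paper's proof, which simply takes $A_0=A$, $\calC_0=\{\{x\}:x\in A\}$ and invokes the general Lemma~\ref{lemmaDefHierarchy} with $v=1$. You instead give a self-contained construction via box-tiling and greedy pairwise merging; this is close in spirit to the \emph{proof} of Lemma~\ref{lemmaDefHierarchy}, but not equivalent to it, and the difference matters.

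The gap is in your loss estimate. Your series $\sum_{k\geqslant 0} C\,n^2\cdot 2^{k+1}/D_{2k+1}^2$ presumes that the number of level-$j$ boxes is $O(n^2/D_j^2)$, which is only valid while $D_j\lesssim n$. Once $a_j\geqslant n$ there is a single box, and pair-only merging of an odd number of clusters still leaves one unpaired; at an odd step $j=2k+1$ this cluster may have size in $[2^k,2^{k+1})$ and be discarded, a loss your series does not capture. This is not just a constant to be tidied up --- it makes the stated bound fail. Take $D_0=100$, $n=5$, $|A|=5$: the hypothesis $|A|\geqslant 288\,n^2/D_0^2=0.72$ holds, and the conclusion requires $|A_{\calJ}|\geqslant 5-0.36$, i.e.\ no loss at all. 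But $a_0=50>n$, so from the start there is a single box; your $\calC_1$ consists of two pairs and one singleton, and at the odd step $j=1$ a perfectly legal ``greedy'' pairing may leave the singleton unpaired, whence it is dropped (size $1<2$), giving $|A_{\calJ}|=4$.

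The paper's Lemma~\ref{lemmaDefHierarchy} avoids exactly this failure by merging in pairs \emph{or triples} (via Lemma~\ref{lemmaGroup2or3}): as soon as $D_{2j}/6\geqslant n$ every surviving cluster is narrow and merging, the rubbish set $\calR$ is empty, and nothing is discarded in that regime. To repair your argument you must either allow triples as the paper does, or specify and justify a pairing rule that provably never strands a sub-threshold cluster in the single-box regime.
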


The proof of this Lemma is deferred to section~\ref{sectionHierarchy}.

\begin{proof}[Proof of~\reff{bound2Dlow}]
Let~$d=2$. For every~$\lambda\in(0,1)$, we define, for every~$j\in\N$,
$$D_j\ =\ 6^j\Ceil{\frac{1}{\lambda}}
\quadet
\alpha_j
\ =\ \ln\left(\frac{1+2\lambda}{2\lambda}\right)
-\frac{a}{2}\big(1-2^{-j/4}\big)\ln|\ln\lambda|
\quadavec
a\ =\ \frac{2^{9/4}}{2^{1/4}-1}\,,$$
and we consider
$$\mu\ =\ \lambda\,|\ln\lambda|^{a}
\qquadet
\mu'\ =\ \mu-\frac{144}{(D_0)^2}\,.$$
Note that all these quantities are functions of the sleep rate~$\lambda$, although we omit to write the dependence in the notation.
In what follows, we assume that~$\lambda$ is small enough so that~$\mu<1$.

We start with the construction of the dormitory hierarchy.
When the sleep rate~$\lambda$ tends to~$0$, we have~\smash{$1/(D_0)^2\sim \lambda^2=o\big(\lambda|\ln\lambda|^{a}\big)=o(\mu)$}, whence~$\mu\geqslant 288/(D_0)^2$ provided that~$\lambda$ is chosen small enough.
This allows us to apply Lemma~\ref{lemmaDefHierarchy2Dlow} above to construct, for every~\smash{$n\geqslant 1$} and every~$A\subset\Z_n^2$ with~$|A|=\Ceil{\mu n^d}$, a dormitory hierarchy~$(A_j,\,\calC_j)_{j\leqslant {\calJ}}$ on~$A$ such that~$\calC_0$ contains only singletons and~$|A_{\calJ}|\geqslant |A|- 144 n^2/(D_0)^2\geqslant \mu' n^d$.

Then, we turn to the initialization step.
To prove that~$\calP(0)$ holds, notice that the stabilization of a singleton~$C\in\calC_0$ requires exactly~$\calS(C)=1$ sleep.
Hence, for every~$C\in\calC_0$, it follows from Lemma~\ref{lemmaNextColour} that~$1+\calL(C,\,0)$ is a geometric random variable with parameter
$$\frac{2\lambda}{1+2\lambda}
\ =\ e^{-\alpha_0}
\ =\ e^{-\alpha_0|C|}\,,$$
showing that~$\calP(0)$ holds with our definition of~$\alpha_0$.

We now wish to apply Lemma~\ref{lemmaInduction} (with~$v=1$) to perform the induction step.
To do so, we have to check that the condition~\reff{conditionInduction} is satisfied, that is to say, we want to show that
\begin{equation}
\label{defg}
g(\lambda)
\ :=\ \sup_{j\in\N}\,
\frac{1}{(\alpha_{j}-\alpha_{j+1})\,2^{j/2}}
\,\ln\Bigg[
\frac{4(1+\lambda)2^{3j/2}}
{\big(1-e^{-\alpha_{j}}\big)\Upsilon_2(D_{j})}\Bigg]
\ \leqslant\ 1\,.
\end{equation}
For every~$j\in\N$, we have
$$\alpha_{j}-\alpha_{j+1}
\ =\ \frac{a}{2}\big(2^{-j/4}-2^{-(j+1)/4}\big)\ln|\ln\lambda|
\ =\ \frac{a\big(2^{1/4}-1\big)}{2^{5/4+j/4}}\ln|\ln\lambda|
\ =\ \frac{2\ln|\ln\lambda|}{2^{j/4}}\,,$$
and Lemma~\ref{lemmaHarnack} tells us that there exists~$K>0$ such that, for every~$j\in\N$,
$$-\ln\Upsilon_2(D_j)
\ \leqslant\ \ln\left(\frac{\ln D_j}{K}\right)
\ \leqslant\ \ln\ln\Ceil{\frac{1}{\lambda}}+\frac{j\ln 6}{\ln\Ceil{1/\lambda}}-\ln K\,.$$
Plugging these into~\reff{defg}, we obtain that, when~$\lambda\to 0$,
\begin{align*}
g(\lambda)
&\ =\ \frac{1}{2\ln|\ln\lambda|}
\,\sup_{j\in\N}\,
\frac{2\ln 2
+\ln(1+\lambda)
+3j\ln 2/2
-\ln\big(1-e^{-\alpha_{j}}\big)
-\ln\Upsilon_2(D_{j})
}
{2^{j/4}}\\
&=\ \frac{\ln|\ln\lambda|+O(1)}
{2\ln|\ln\lambda|}
\ =\ \frac{1}{2}+o(1)\,.
\end{align*}
Thus, we have~$g(\lambda)\leqslant 1$ for~$\lambda$ small enough, meaning that the condition~\reff{conditionInduction} necessary to apply Lemma~\ref{lemmaInduction} (the induction step) is satisfied.
Thus, we deduce that~$\calP({\calJ})$ also holds, implying that~\smash{$1+\calL(A_{\calJ},\,{\calJ})$} dominates a geometric variable of parameter
$$e^{-\alpha_{\calJ} |A_{\calJ}|}
\ \leqslant\ e^{-\alpha_\infty\mu' n^d}
\qquadou
\alpha_\infty
\ =\ \inf_{j\in\N}\alpha_j
\ =\ \ln\left(\frac{1+2\lambda}{2\lambda}\right)
-\frac{a}{2}\ln|\ln\lambda|\,,$$
implying the same domination for~$\calH(A_{\calJ})$, since~$\calH(A_{\calJ})\geqslant |A_{\calJ}|+\calL(A_{\calJ})\geqslant 1+\calL(A_{\calJ},\,{\calJ})$.
To deduce that~$\mu\geqslant\mu_c(\lambda)$, we now rely on the sufficient condition given by Lemma~\ref{lemmaConditionActivityLoops}.
Thus, there only remains to check that~$\alpha_\infty\mu'>\psi(\mu)$.
On the one hand, when~$\lambda\to 0$, we have
\begin{align*}
\alpha_\infty\mu'
&\ =\ \left(|\ln\lambda|
-\frac{a}{2}\ln|\ln\lambda|
+O(1)\right)
\big(\lambda|\ln\lambda|^{a}+O(\lambda^2)\big)\\
&\ =\ \lambda|\ln\lambda|^{a+1}
-\frac{a}{2}\lambda|\ln\lambda|^{a}\ln|\ln\lambda|
+O\big(\lambda|\ln\lambda|^{a}\big)
\end{align*}
while, on the other hand, we have
\begin{align*}
\psi(\mu)
&\ =\ \lambda|\ln\lambda|^{a}
\big(|\ln\lambda|-a\ln|\ln\lambda|\big)
+O\big(\lambda|\ln\lambda|^{a}\big)\\
&\ =\ \lambda|\ln\lambda|^{a+1}
-a\lambda|\ln\lambda|^{a}\ln|\ln\lambda|
+O\big(\lambda|\ln\lambda|^{a}\big)\,,
\end{align*}
which implies that~$\alpha_\infty\mu'>\psi(\mu)$ for~$\lambda$ small enough, allowing us to deduce by virtue of Lemma~\ref{lemmaConditionActivityLoops} that~$\mu\geqslant\mu_c(\lambda)$, thereby concluding the proof of the bound~\reff{bound2Dlow}.
\end{proof}

\subsection{High sleep rate: concluding proof of the bound~\reff{bound2Dhigh}}
\label{proofBound2Dhigh}

We now turn to the proof of the upper bound on~$\mu_c(\lambda)$ when the sleep rate~$\lambda$ tends to infinity.
In this regime, we use the dormitory hierarchy given by Lemma~\ref{lemmaDefHierarchy2Dhigh} below, which is proved in section~\ref{sectionHierarchy}.
Recall the definition of~$r$-connectedness, which was given in section~\ref{subsectionNotation}.

\begin{lemma}
\label{lemmaDefHierarchy2Dhigh}
Let~$d=2$, let~$r\geqslant 1$ and~$D_j=6^j\times 96r^3$ for every~$j\in\N$.
For~$n\in\N$ large enough, for every~$A\subset\Z_n^2$ with~$|A|\geqslant n^2/2$, there exists~${\calJ}\in\N$ and a~$\big(r^2,\,D\big)$-dormitory hierarchy~$(A_j,\,\calC_j)_{j\leqslant {\calJ}}$ on~$A$, with~$|A_{\calJ}|\geqslant |A|-n^2/2$ and where every set~$C\in\calC_0$ is~$8r$-connected and satisfies
\begin{equation}
\label{conditionDense}
\forall x\in C\qquad
\big|C\cap B(x,\,4r)\big|
\ \geqslant\ r^2\,.
\end{equation}
\end{lemma}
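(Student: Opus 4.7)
The plan is to first carve out a dense sub-configuration $A_0 \subseteq A$ from which $\calC_0$ is built, and then to construct the higher levels of the hierarchy by inductively pairing clusters that lie in common cells of progressively coarser grids.

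For the base level, I partition $\Z_n^2$ into axis-aligned squares of side $4r+1$, each of $L^\infty$-diameter $4r$. Call a square \emph{rich} if it contains at least $r^2$ points of $A$, and set
\[
A_0 \;:=\; \bigcup_{Q\text{ rich}} (A \cap Q)
\qquadet
\calC_0 \;:=\; \bigl\{A \cap Q \,:\, Q\text{ rich}\bigr\}.
\]
Since there are at most $\lceil n/(4r+1)\rceil^2 \leq n^2/(16 r^2)$ squares and each non-rich one contributes fewer than $r^2$ points, we obtain $|A_0| \geq |A| - n^2/16$. Every cluster $C = A \cap Q \in \calC_0$ has $|C| \geq r^2$ and diameter at most $4r$, hence is $4r$-connected and a fortiori $8r$-connected; moreover $Q \subset B(x, 4r)$ for every $x \in Q$, so $|C \cap B(x, 4r)| = |C| \geq r^2$, which yields condition~\textit{(\ref{conditionCard})} at level $0$ and the density property~\reff{conditionDense}.

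For the inductive step, suppose $\calC_j$ has been built so that every cluster has diameter at most $\Delta_j$, where $\Delta_0 := 4r$ and $\Delta_{j+1} := D_j$ for $j \geq 0$. Overlay a grid of cells of side $s_j := D_j - 2\Delta_j$ on $\Z_n^2$ (so $s_0 = 96r^3 - 8r$ and $s_j = 4 D_{j-1}$ for $j \geq 1$, using $D_j = 6 D_{j-1}$), and assign each cluster of $\calC_j$ to the cell containing its lexicographically smallest point. Within each cell, pair up the clusters two at a time: each pair $\{C,C'\}$ merges into a new cluster $C \cup C' \in \calC_{j+1}$ of diameter at most $\Delta_j + s_j + \Delta_j = D_j$, verifying~\textit{(\ref{conditionMerge})}, and of size at least $2 \cdot 2^{\lfloor j/2\rfloor} r^2 \geq 2^{\lfloor(j+1)/2\rfloor} r^2$, preserving~\textit{(\ref{conditionCard})}. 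A leftover cluster in a cell (at most one per cell) is carried into $\calC_{j+1}$ unchanged when $j$ is even, and removed from $A_{j+1}$ when $j$ is odd (the only case where the size bound would fail at the next level). The iteration runs until the grid scale $s_j$ exceeds the torus diameter; after $O(\log n)$ further rounds of pairing inside the unique remaining cell, only one cluster is left, giving~\textit{(\ref{conditionFinal})}.

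The main technical challenge is the accounting of discarded mass across all levels. A cluster discarded at an odd level $j$ has size strictly less than $2^{(j+1)/2} r^2$, and there is at most one discard per cell, so the mass lost at level $j$ is at most $(n/s_j)^2 \cdot 2^{(j+1)/2} r^2$. Substituting $D_{j-1} = 6^{j-1} \cdot 96 r^3$ gives a bound of the form $c\,n^2 \cdot 2^{(j+1)/2} /(r^4 \cdot 36^{j-1})$; summing the resulting convergent geometric series over odd $j \geq 1$ yields a total of order $n^2/r^4$. The discards during the final collapse phase grow only as $n^{1+o(1)}$ and are therefore $o(n^2)$ for $n$ large. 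Combined with the $n^2/16$ loss at level $0$, the cumulative discarded mass stays comfortably below $n^2/2$, yielding $|A_\calJ| \geq |A| - n^2/2$. The key insight is that the growth rate $D_j = 6^j \cdot 96 r^3$ is fast enough to make the number of cells shrink geometrically, outpacing the linear growth of the size threshold and keeping the discards summable.
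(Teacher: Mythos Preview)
Your grid-based route is a reasonable alternative to the paper's construction (which defines~$A_0$ by a local density condition, takes~$\calC_0$ to be its~$8r$-connected components, and then invokes Lemma~\ref{lemmaDefHierarchy}), but there is a genuine gap in your discarded-mass accounting. You assert that ``a cluster discarded at an odd level~$j$ has size strictly less than~$2^{(j+1)/2} r^2$'', yet the removal rule you actually state discards \emph{every} leftover at odd~$j$, regardless of size. Nothing prevents a leftover from being large: a cluster that has been merged at each of the levels~$0,\ldots,j-1$ has size at least~$2^{j} r^2 \gg 2^{(j+1)/2} r^2$, and such a cluster can perfectly well be the odd one out in its cell (for instance if a cell contains exactly three clusters, all of that size). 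With the rule as written, the per-level bound~$(n/s_j)^2 \cdot 2^{(j+1)/2} r^2$ on the lost mass is therefore unjustified, and the summability argument breaks down.

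The fix is easy: at odd~$j$, discard a leftover only when its size is below~$2^{(j+1)/2} r^2$, and otherwise carry it unchanged into~$\calC_{j+1}$ (it then satisfies condition~\textit{(\ref{conditionCard})} by fiat, and its diameter bound~$\Delta_j<\Delta_{j+1}$ is already in place). This is exactly the role played by the set~$\calB$ of ``big'' clusters in the paper's Lemma~\ref{lemmaDefHierarchy}. With this amendment your geometric-series bound on the pre-collapse discards is valid. One residual imprecision: the final-collapse discards are not~$n^{1+o(1)}$ as you claim, since~$\calJ$ is of order~$\log n$ but with a constant exceeding~$2/\ln 2$, so~$2^{\calJ/2}$ is a genuine power~$n^{c}$ with~$c>1$; nevertheless the total over the~$O(\log n)$ collapse rounds is still~$o(n^2)$, so the conclusion~$|A_\calJ|\geqslant |A|-n^2/2$ survives.
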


In fact, one could also consider the simpler hierarchy in which the sets of~$\calC_0$ are simply the connected components of~$A$ which contain at least~$r^2$ vertices, which would also work to prove that~$\mu_c<1$ for all~$\lambda>0$.
However, this would yield a weaker bound on~$\mu_c$ because this can lead to throwing away too many vertices if for example~$A$ contains many connected components with strictly less than~$r^2$ points, hence our choice of a slightly weaker condition on the sets of~$\calC_0$.

The initialization step is performed in the following Lemma, which is proved in section~\ref{sectionInit}.
Let~$K>0$ be the constant given by Lemma~\ref{lemmaHarnack} in dimension~$d=2$.

\begin{lemma}\label{lemmaInit2Dhigh}
Assume that~$d=2$.
There exists~$\lambda_0>1$ such that, for every~$\lambda\geqslant\lambda_0$, defining
\begin{equation}
\label{paramsInit}
r\ =\ 
\Ceil{\frac{8(\ln\lambda)\sqrt{\lambda}}{\sqrt{K}}}
\qquadet
\alpha_0\ =\ 
\frac{K}{\lambda\ln\lambda}\,,
\end{equation}
for every~$D\in(\N\setminus\{0\})^\N$ and~$n\geqslant 1$, if~$A\subset\Z_n^2$ and~$(A_j,\,\calC_j)_{j\leqslant {\calJ}}$ is a~$(r^2,\,D)$-dormitory hierarchy on~$A$ such that every set~$C\in\calC_0$ is~$8r$-connected and \guillemets{dense} in the sense of~\reff{conditionDense}, then, for every~$C\in\calC_0$, there exists a~$C$-toppling procedure~$f$ such that~$1+\calL(C,\,0)$ dominates a geometric variable with parameter~\smash{$\exp\big(-\alpha_0 |C|\big)$}, that is to say, the property~$\calP(0)$ holds.
\end{lemma}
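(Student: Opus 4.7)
The plan is to prove a stochastic lower bound on the number $\calS(C)$ of times the distinguished particle $\dist_C$ falls asleep during the stabilization of $C$, and then to convert this into the advertised lower bound on $\calL(C,0)$ via the structural Lemmas~\ref{lemmaNextColour} and~\ref{lemmaSumGeometrics}. I take $f_C$ to be the toppling procedure that gives absolute priority to $\dist_C$ whenever it is active, and otherwise returns an arbitrary active site of $C$ chosen by a fixed deterministic rule on the torus; this satisfies the constraints from section~\ref{subsectionStrategy} and ensures that $\dist_C$ is toppled (and eventually returns to sleep) as soon as it is reactivated.

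The central step is to analyse the sleeping count $S_t=|C\setminus R_t|$ along a single \emph{session} of $\dist_C$, defined as the evolution starting just after $\dist_C$ falls asleep and ending either when $C$ is fully stable or when $\dist_C$ is reactivated. A step at a non-distinguished active site $x\in C$ contributes either $+1$ to $S_t$ (sleep, probability $\lambda/(1+\lambda)$) or $-W$ (loop, probability $1/(1+\lambda)$), where $W$ is the number of sleeping sites of $C$ visited by the loop. Combining the dense condition $|C\cap B(x,4r)|\geq r^2$ with the Harnack bound $\Upsilon_2(4r)\geq K/\ln(4r)$ of Lemma~\ref{lemmaHarnack} and the choice $r^2\geq 64\lambda(\ln\lambda)^2/K$, I would show that whenever the sleeping fraction inside $B(x,4r)\cap C$ is bounded below by a threshold of order $1/\ln\lambda$, one has $\bE[W\mid x]\gtrsim \lambda\ln\lambda\gg\lambda$, so that $S_t$ has a strongly negative drift.

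I would then set up an exponential supermartingale of the form $M_t=\exp(-\beta(|C|-S_t))$, stopped at the first time $\tau$ when $S_t$ reaches $|C|$ or $\dist_C$ is reactivated, with $\beta$ of order $\alpha_0=K/(\lambda\ln\lambda)$ chosen so that the Taylor expansion of $\bE[e^{-\beta\Delta S_t}]$, combined with the drift bound above and a suitable upper bound on $\bE[W^2]$, yields the supermartingale inequality. Optional stopping at $\tau$ then gives that the probability of a session being terminal (i.e.\ that $C$ stabilizes before $\dist_C$ is reactivated) is at most $\exp(-\alpha_0|C|)/(2\lambda)$, uniformly over the starting configuration of the session, and by the strong Markov property across successive sessions, $\calS(C)\succeq\mathrm{Geom}\bigl(\exp(-\alpha_0|C|)/(2\lambda)\bigr)$. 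Finally, Lemma~\ref{lemmaNextColour} at $j=0$ gives $\calL(C,0)\stackrel{\text{d}}{=}\sum_{i=1}^{\calS(C)}(X_i-1)$ with $(X_i)$ i.i.d.\ $\mathrm{Geom}(2\lambda/(2\lambda+1))$ independent of $\calS(C)$, and plugging this into Lemma~\ref{lemmaSumGeometrics} with $a=\exp(-\alpha_0|C|)/(2\lambda)$ and $b=2\lambda/(2\lambda+1)$ yields $1+\calL(C,0)\succeq\mathrm{Geom}\bigl(2a\lambda/(1+2a\lambda)\bigr)\succeq\mathrm{Geom}\bigl(\exp(-\alpha_0|C|)\bigr)$, which is exactly $\calP(0)$.

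The main obstacle is the exponential supermartingale step, since a single loop can wake many sites at once so that $W$ is heavy-tailed. Controlling $\bE[W^2]$ at the scale required by $\beta\asymp\alpha_0$ will demand a decomposition of $W$ into its contribution from $B(x,4r)$, where the dense condition and Harnack estimate give tight control, and from more distant sites, whose total contribution should be small thanks to the Green's-function decay on the torus. A secondary subtlety is making the terminal-probability bound uniform in the starting configuration of a session (in particular when $S_t$ is close to $|C|-1$), which may require modifying $M_t$ in the near-stabilization regime by weighting it with a factor that tracks the number of loops already emitted, using that each such loop has an independent chance of reactivating $\dist_C$ from a nearby active site.
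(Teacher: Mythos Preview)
Your final conversion from $\calS(C)$ to $\calL(C,0)$ via Lemmas~\ref{lemmaNextColour} and~\ref{lemmaSumGeometrics} is exactly right and matches the paper verbatim. The problem is upstream, in the supermartingale step, and it is a genuine gap rather than a technicality.

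Your toppling procedure picks an \emph{arbitrary} active site of $C$ when $\dist_C$ is asleep, and you then argue that the drift on $S_t$ is strongly negative ``whenever the sleeping fraction inside $B(x,4r)\cap C$ is bounded below by a threshold''. But with an arbitrary choice of $x$ there is no reason this local sleeping fraction should be bounded below, even when the global sleeping count $S_t$ is large. Think of an $8r$-connected cluster $C$ satisfying~\reff{conditionDense} that is long and thin: all the sleepers may be concentrated at one end while your arbitrary rule picks an active $x$ at the other end, surrounded only by active sites, so the loop wakes nobody and $W=0$. In that regime your process $M_t=\exp(-\beta(|C|-S_t))$ is a strict submartingale, not a supermartingale, and the optional-stopping argument collapses. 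The paper fixes this by \emph{engineering} the toppling procedure through Lemma~\ref{lemmaActiveSleepingNearby}: whenever $|R|\leqslant\beta|C|$ it selects an active site $x^\star$ with $|(C\setminus R)\cap B(x^\star,16r)|\geqslant(1-\beta)r^2$, and the supermartingale (Lemma~\ref{lemmaMetastability}) is run only on the region $|R\cap C|\leqslant\beta|C|$ with $\beta=5/6$, stopped on first return to the ``many active'' set $\calB$. This localizes the drift estimate to a regime where it is guaranteed, and it is the missing idea in your proposal.

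Two further remarks. First, your worry about $\bE[W^2]$ is unnecessary: once the toppling procedure guarantees at least $(1-\beta)r^2$ sleeping sites within distance $16r$ of $x$, the paper bounds $\bE[e^{-\alpha W}]$ directly via the elementary inequality of Lemma~\ref{lemmaSumBernoulli} for sums of dependent Bernoullis, with no Taylor expansion or variance control needed. Second, your ``secondary subtlety'' about uniformity near stabilization is handled in the paper by a two-stage argument rather than a single session analysis: Lemma~\ref{lemmaMetastability} first shows that the number $\calN_\calB$ of returns to $\calB$ dominates a geometric with parameter $\exp(-\tfrac32\alpha_0\lfloor\tfrac56|C|\rfloor)$, and then a separate estimate (the event $\calE$ in the paper's section~\ref{sectionInit}) shows that between two such returns the distinguished vertex is woken and put back to sleep with probability at least $1-\exp(-\alpha_0|C|)/(2\lambda)$. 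This decoupling avoids having to control the supermartingale all the way down to $|R\cap C|=0$, which is precisely where your direct session approach would struggle.
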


\begin{proof}[Proof of~\reff{bound2Dhigh}]
Let~$\lambda_0>1$ given by Lemma~\ref{lemmaInit2Dhigh}.
We consider the functions~$r$ and~$\alpha_0$ of~$\lambda$ defined in~\reff{paramsInit}, and, for~$j\in\N$, we write
$$\alpha_j
\ =\ \frac{1+2^{-j/4}}{2}\,\alpha_0
\qquadet
D_j
\ =\ 6^j\times 96r^3\,.$$
Let us first check that, with these parameters, the condition~\reff{conditionInduction} required to apply the induction step, namely Lemma~\ref{lemmaInduction} (with~$v=r^2$), is satisfied for~$\lambda$ large enough.
For every~$j\in\N$, we have
$$\alpha_{j}-\alpha_{j+1}
\ =\ \frac{2^{1/4}-1}{2^{5/4}}\,\frac{\alpha_0}{2^{j/4}}\,.$$
Then, using the estimate of Lemma~\ref{lemmaHarnack}, we can write
$$-\ln\Upsilon_2(D_j)
\ \leqslant\ \ln\left(\frac{\ln D_j}{K}\right)
\ =\ \ln\big(j\ln 6+\ln 96+3\ln r\big)-\ln K
\ \leqslant\ \ln\ln r+\ln j+K'\,,$$
for a certain fixed constant~$K'>0$.
Using this and noting that~$\alpha_j\geqslant\alpha_0/2$ for every~$j\in\N$ and that~$2^{5/4}/(2^{1/4}-1)< 16$, we have, when~$\lambda\to\infty$,
\begin{align*}
&\sup_{j\in\N}\,
\frac{1}{(\alpha_{j}-\alpha_{j+1})\,2^{j/2}r^2}
\,\ln\left[
\frac{4r^2(1+\lambda)2^{3j/2}}
{\big(1-e^{-\alpha_{j}r^2}\big)
\Upsilon_2(D_{j})
}\right]
\\
&\leqslant\ \frac{16}{\alpha_0 r^2}
\,\sup_{j\in\N}\,
\frac{
2\ln 2
+2\ln r
+\ln(1+\lambda)
+3j\ln 2/2
-\ln\big(1-e^{-\alpha_{j}r^2}\big)
-\ln\Upsilon_2(D_{j})}
{2^{j/4}}\\
&=\ \frac{1}{4\ln\lambda}\Big[\,
2\ln r
+\ln\lambda
+O\big(e^{-\alpha_0 r^2/2}\big)
+O(\ln\ln r)
+O(1)
\,\Big]
\ =\ \frac{1}{2}+o(1)\,.
\end{align*}
Therefore, the condition~\reff{conditionInduction} is satisfied for~$\lambda$ large enough.
Let~$\lambda_1\geqslant \lambda_0$ such that this condition is satisfied for all~$\lambda\geqslant \lambda_1$, and such that~$4\lambda_1(\ln\lambda_1)^2\geqslant K$.
We now take~$\lambda\geqslant \lambda_1$ and we define
$$\mu\ =\ 1-\frac{K}{8\lambda(\ln\lambda)^2}\,.$$
Let~$n\geqslant 1$, and let~$A\subset\Z_n^2$ with~$|A|=\Ceil{\mu n^d}$ (which implies that~$|A|\geqslant n^2/2$, since~$\mu\geqslant 1/2$).
We now consider a dormitory hierarchy~$(A_j,\,\calC_j)_{j\leqslant {\calJ}}$ given by Lemma~\ref{lemmaDefHierarchy2Dhigh}, so that we have
$$|A_{\calJ}|
\ \geqslant\ 
|A|
-\frac{n^2}{2}
\ \geqslant\ \left(\mu-\demi\right)n^2\,.$$
First, Lemma~\ref{lemmaInit2Dhigh} entails that for every~$C\in\calC_0$, there exists a~$C$-toppling procedure~$f_C$ such that the property~$\calP(0)$ holds with these procedures.
Then, Lemma~\ref{lemmaInduction} ensures that~\smash{$1+\calL(A_{\calJ},\,{\calJ})$} dominates a geometric random variable of parameter
$$\exp\big(-\alpha_{\calJ} |A_{\calJ}|\big)
\ \leqslant\ \exp\big(-\kappa n^2\big)
\qquadavec
\kappa\ =\ \frac{\alpha_0}{2}\left(\mu-\demi\right)\,.$$
We now check that~$\kappa>\psi(\mu)$ for~$\lambda$ large enough, in order to apply~Lemma~\ref{lemmaConditionActivityLoops}.
When~$\lambda\to\infty$, we have
\begin{align*}
\kappa-\psi(\mu)
&\ =\ \frac{\alpha_0}{2}\left(\mu-\demi\right)-\psi(\mu)\\
&\ =\ 
\frac{\alpha_0}{2}
-\frac{\alpha_0 K}{16\lambda(\ln\lambda)^2}
-\frac{\alpha_0}{4}
-\mu|\ln\mu|
-(1-\mu)\big|\ln(1-\mu)\big|\\
&\ =\ 
\frac{K}{4\lambda\ln\lambda}
+O\left(\frac{1}{\lambda^2(\ln\lambda)^3}\right)
-\frac{K}{8\lambda\ln\lambda}
+O\left(\frac{\ln\ln\lambda}{\lambda(\ln\lambda)^2}\right)\\
&\ =\ \frac{K}{8\lambda\ln\lambda}
+o\left(\frac{1}{\lambda\ln\lambda}\right)\,,
\end{align*}
which shows that this quantity is strictly positive when~$\lambda$ is large enough, concluding our proof that~$\mu\geqslant\mu_c(\lambda)$, leading to the claimed upper bound~\reff{bound2Dhigh}, with~$c=K/8$.
\end{proof}

\section{The transient case: proof of Theorem~\ref{thm3D}}
\label{section3D}

We now turn to the simpler case of dimension~$d\geqslant 3$.
Given~$n\geqslant 1$ and~$A\subset\torus$, we consider the trivial hierarchy with only one level, that is to say,~${\calJ}=0$,~$A_0=A$ and~$\calC_0=\{A\}$.
Then, as explained in paragraph~\ref{sketch3D}, compared to the recursive proof in dimension~$2$, we only keep the initialization step.
Indeed, we will see that the results in dimension~$d\geqslant 3$ are easy consequences of the initialization step of the previous proof, more precisely of Lemma~\ref{lemmaMetastability}.

\subsection{Low sleep rate: proof of the bound~\reff{bound3Dlow}}
\label{subsectionProofBound3Dlow}

Let~$d\geqslant 3$, and let~$K>0$ be the constant given by Lemma~\ref{lemmaHarnack} associated with~$d$ (note that we have~$K<1$).
For every~$\lambda<K^8/e$ and~$n\geqslant 1$, we consider
$$\mu\ =\ \frac{e}{K^8}\,\lambda\,,\qquad
v\ =\ \big\lceil\mu n^d\big\rceil\,,\qquad
\alpha
\ =\ |\ln\lambda|-2|\ln K|
\qquadet
\beta
\ =\ 1-\frac{2|\ln K|}{|\ln\lambda|}\,.$$
We show that, provided that~$\lambda$ is small enough, we have~$\mu\geqslant\mu_c(\lambda)$.
To this end, we wish to apply Lemma~\ref{lemmaMetastability} with these parameters and~$r=+\infty$, and for this we have to check the condition~\reff{conditionMetastability}.
For every fixed~$\lambda<K^8/e$, since~$\alpha>0$ and~$\beta\in(0,1)$, we have
$$\lim_{n\to\infty}\,\exp\Big[\alpha\big(1-(1-\beta)v\big)\Big]
\ \leqslant\ \lim_{n\to\infty}\,\exp\Big[\alpha\big(1-(1-\beta)\mu n^d\big)\Big]
\ =\ 0\,,$$
implying that, for~$n$ large enough, we have
$$\lambda\big(e^\alpha-1\big)
\ =\ K^2-\lambda
\ \leqslant\ K^2
\ \leqslant\ K\big(1-e^{\alpha(1-(1-\beta)v)}\big)\,,$$
which is precisely the required condition~\reff{conditionMetastability}.
Thus, we may apply Lemma~\ref{lemmaMetastability} to deduce that, for~$n$ large enough, for every~$A\subset\torus$ with~$|A|=\Ceil{\mu n^d}$, the variable~$\calN_{\calB}$ (the number of visits of the set of configurations with \guillemets{many} active particles, as defined in the statement of Lemma~\ref{lemmaMetastability}) dominates a geometric variable with parameter
$$\exp\Big(-\alpha\big\lfloor\beta|A|\big\rfloor\Big)
\ \leqslant\ \exp\Big(-\alpha\big\lfloor\beta\mu n^d\big\rfloor\Big)\,,$$
and so does the number of topplings~$\calH(A)$, since~$\calH(A)\geqslant\calN_\calB$.
Note now that, when~$\lambda\to 0$,
$$\alpha\beta\mu
\ =\ \left(1-\frac{2|\ln K|}{|\ln\lambda|}\right)^2
\frac{e}{K^8}\,\lambda|\ln\lambda|
\ =\ \frac{e}{K^8}\,\lambda|\ln\lambda|
-\frac{4e|\ln K|}{K^8}\,\lambda
+o(\lambda)\,,$$
while
$$\psi(\mu)
\ =\ \mu|\ln\mu|+(1-\mu)|\ln(1-\mu)|
\ =\ \frac{e}{K^8}\,\lambda|\ln\lambda|
-\frac{8e|\ln K|}{K^8}\,\lambda
+o(\lambda)\,.$$
Therefore, writing
$$\kappa\ =\ \frac{e}{K^8}\,\lambda|\ln\lambda|
-\frac{6e|\ln K|}{K^8}\,\lambda\,,$$
we deduce that, for~$\lambda$ large enough, for~$n$ large enough, we have
$$\exp\Big(-\alpha\big\lfloor\beta\mu n^d\big\rfloor\Big)
\ \leqslant\ \exp\big(-\kappa\,n^d\big)
\qquadet
\kappa\ >\ \psi(\mu)\,,$$
implying that~$\mu\geqslant\mu_c(\lambda)$ by virtue of Lemma~\ref{lemmaConditionActivityLoops}, which shows~\reff{bound3Dlow} with~$c=e/K^8$.\hfill\qed

\subsection{High sleep rate: proof of the bound~\reff{bound3Dhigh}}

Let~$d\geqslant 3$, and let~$K>0$ be the constant given by Lemma~\ref{lemmaHarnack} applied in dimension~$d$.
Let~$\lambda>1$ and let
$$\mu\ =\ 1-\frac{K}{16\lambda\ln\lambda}\,.$$
We assume that~$\lambda$ is large enough so that~$\mu>0$.
Let~$\alpha=K/(2\lambda)$, let~$\beta=1/2$, let~$n\geqslant 1$ and~\smash{$v=\Ceil{\mu n^d}$}.
As in section~\ref{subsectionProofBound3Dlow}, the condition~\reff{conditionMetastability} is satisfied provided that~$\lambda$ and~$n$ are large enough, since~\smash{$\lambda\big(e^{\alpha}-1\big)\sim\lambda\alpha=K/2$} when~$\lambda\to\infty$.
Thus, it follows from Lemma~\ref{lemmaMetastability} that, for every~$A\subset\torus$ with~$|A|=\Ceil{\mu n^d}$, the variable~$\calN_{\calB}$ dominates a geometric variable with parameter
$$\exp\left(-\alpha\Ent{\frac{|A|}{2}}\right)
\ =\ \exp\left(-\frac{K}{2\lambda}\Ent{\frac{\Ceil{\mu n^d}}{2}}\right)
\ \leqslant\ \exp\left(-\frac{K}{8\lambda}\,n^d\right)\,,$$
for~$\lambda$ and~$n$ large enough.
Now, when~$\lambda\to\infty$, we have
$$\psi(\mu)
\ =\ \psi\left(\frac{K}{16\lambda\ln\lambda}\right)
\ \sim\ \frac{K}{16\lambda\ln\lambda}
\ln\left(\frac{16\lambda\ln\lambda}{K}\right)
\ \sim\ \frac{K}{16\lambda}\,.$$
Thus, for~$\lambda$ large enough, we have~$K/(8\lambda)>\psi(\mu)$, which allows us to apply Lemma~\ref{lemmaConditionActivityLoops} to deduce that~$\mu\geqslant\mu_c(\lambda)$, concluding the proof of the upper bound~\reff{bound3Dhigh} with~$c=K/16$.\hfill\qed

\section{Construction of the dormitory hierarchy}
\label{sectionHierarchy}

The goal of this section is to detail the construction the hierarchical structure on the settling set~$A$.
The method differs depending on the regime of sleep rate considered, but the only difference is the definition of~$\calC_0$.
Hence, we start by explaining the recursive construction of the hierarchy once~$\calC_0$ is defined, which is common in the two regimes, before detailing the construction of this first level in the two regimes.

\subsection{Inductive construction}

\begin{lemma}
\label{lemmaDefHierarchy}
Let~$d,\,n,\,v,\,D_0\geqslant 1$, let~$D_j=6^jD_0$ for every~$j\geqslant 1$ and let~$A\subset\torus$.
Assume that~$A_0\subset A$ is such that~$|A_0|\geqslant 8v(6n/D_0)^d$ and that~$\calC_0$ is a partition of~$A_0$ such that, every cluster~$C\in\calC_0$ is~\smash{$\Ent{D_0/(12v)}$}-connected and satisfies~$|C|\geqslant v$.
Then, one can complete~$(A_0,\,\calC_0)$ into a~$(v,\,D)$-dormitory hierarchy~$(A_j,\,\calC_j)_{j\leqslant {\calJ}}$ on~$A$ such that
\begin{equation}
\label{lowerBoundAJ}
|A_{\calJ}|
\ \geqslant\ |A_0|
-4v\left(\frac{6n}{D_0}\right)^d\,.
\end{equation}
\end{lemma}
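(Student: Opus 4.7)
The plan is to construct the hierarchy inductively, building $(A_{j+1}, \calC_{j+1})$ from $(A_j, \calC_j)$ by a pairing procedure at each level. Starting from $(A_0, \calC_0)$, at step $j$ I split $\calC_j$ into \emph{small} clusters (those with diameter at most $D_j/4$) and \emph{large} clusters, tile $\torus$ by cubes of side $L_j = \Ent{D_j/2}$, assign each small cluster to the cube containing its smallest point in some fixed order on $\torus$, and choose an arbitrary matching of small clusters within each cube. Any matched pair $(C_1,C_2)$ gives a merged cluster $C_1 \cup C_2 \in \calC_{j+1} \setminus \calC_j$ of diameter at most $2\cdot D_j/4 + L_j - 1 < D_j$, respecting condition~\textit{\reff{conditionMerge}}. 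Unpaired small clusters and all large clusters pass to $\calC_{j+1}$ unchanged whenever they still satisfy the size bound $|C| \geq 2^{\Ent{(j+1)/2}} v$, and are removed from $A_{j+1}$ otherwise. The procedure terminates at some level $\calJ$ for which $\calC_\calJ$ contains a single set, because $D_j \to \infty$ eventually makes every surviving cluster small, collapses the tiling to one cube, and then halves the number of remaining clusters at each subsequent step, ensuring condition~\textit{\reff{conditionFinal}}.

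For the bound on $|A_\calJ|$, the key observation is that discards can occur only at odd-to-even transitions, where the size threshold in~\textit{\reff{conditionCard}} doubles; at the other transitions the threshold is unchanged and unpaired clusters automatically survive. At such an odd level $j$, the number of discarded clusters is at most the number of cubes, namely $\Ceil{n/L_j}^d \leq (4n/D_j)^d$, and each discarded cluster has size strictly less than $2^{(j+1)/2} v$. Summing
\[
v \sum_{\substack{j \geq 1 \\ j \text{ odd}}} \left(\frac{4n}{D_j}\right)^d 2^{(j+1)/2}\,,
\]
a geometric series in $j$ with common ratio $2/36^d \leq 1/18$, I see the total discarded volume is dominated by a constant times its first term ($j=1$), and a routine computation shows it is comfortably smaller than $4v(6n/D_0)^d$ in every dimension $d \geq 1$.

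The main obstacle is that clusters in $\calC_0$ may have arbitrarily large diameter: the hypothesis only imposes $r$-connectedness with $r = \Ent{D_0/(12v)}$ and $|C| \geq v$, so initial clusters can be enormous. My construction addresses this through the wait-until-small rule: a large cluster sits untouched through successive levels until the first $j$ with $D_j \geq 4\diam C$, at which point it becomes small and joins the pairing. Crucially, large clusters can never be discarded, since $r$-connectedness forces $|C| \geq (\diam C)/r \geq 3 \cdot 6^j v$ whenever $\diam C > D_j/4$, far exceeding the threshold $2^{\Ent{j/2}} v$ in~\textit{\reff{conditionCard}}. A secondary point is verifying termination: once $D_j$ surpasses the torus diameter, all remaining clusters are small and confined in a single cube, so the pairing halves their number at each step, giving $\calJ < \infty$.
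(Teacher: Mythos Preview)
Your tiling-and-pairing approach is a genuinely different route from the paper's, which instead builds an auxiliary graph on the ``narrow'' clusters and invokes a small combinatorial lemma to partition them into groups of two \emph{or three}, merging triples over two consecutive levels. Both approaches share the key observations you identified: clusters that are still in~$\calC_0$ and have large diameter cannot be discarded (by~$\Ent{D_0/(12v)}$-connectedness), and the threshold in condition~\textit{\reff{conditionCard}} only doubles at odd levels, so that is where discards occur.

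However, your proof has a real gap in the regime where~$D_j$ becomes large compared with~$n$. Your claimed inequality~$\Ceil{n/L_j}^d \leqslant (4n/D_j)^d$ fails once~$D_j > 4n$: the left-hand side is then~$1$ (a single cube), while the right-hand side is less than~$1$. In this single-cube regime your pairing still leaves one unmatched small cluster per odd level, of size up to~$2^{(j+1)/2}v$, and your geometric sum simply does not account for these discards. Concretely, take~$d=1$, $v=1$, $n=50$, $D_0=50$, and~$\calC_0$ the set of all singletons; with an adversarial matching one gets a discard of size~$2$ at level~$j=3$, but your summand there is~$(4n/D_3)\cdot 2^2 = (200/10800)\cdot 4 < 0.08$. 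So the sum you wrote does not bound the total discard.

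The paper avoids this entirely: once~$D_{2j}/6 \geqslant n$, every cluster is ``narrow'', and since~$|\calC_{2j}| \geqslant 2$ they are all ``mergeable'', so the rubbish set~$\calR$ is empty and nothing is discarded. This is precisely what the allowance for triples buys. Your construction, restricted to pairs only, cannot make this claim, and you would need a separate argument bounding the total discard over the~$O(\log k)$ levels of the single-cube phase; this is not routine with the constant~$4v(6n/D_0)^d$ you are aiming for. The cleanest fix is to allow merging a triple (over two levels) whenever a cube contains an odd number of small clusters, which eliminates all single-cube discards.
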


We proceed by induction, constructing two levels at each step.
The idea is that, once the partition~$\calC_{2j}$ is defined, we construct~$\calC_{2j+1}$ and~$\calC_{2j+2}$ by keeping the clusters~\smash{$C\in\calC_{2j}$} which are of size~$|C|\geqslant 2^{j+1}v$ and merging as many pairs or triples of the remaining clusters as possible, while ensuring that doing so, we do not create any cluster of diameter larger than~$D_{2j}$.
The remaining clusters of~$\calC_{2j}$ are thrown away.
To merge three clusters of~$\calC_{2j}$, we simply merge two of them in~$\calC_{2j+1}$, before merging the resulting cluster with the third one when passing to~$\calC_{2j+2}$.

\begin{proof}
We construct the sequence~$A_j$ and the partitions~$\calC_j$ recursively.
To obtain the lower bound~\reff{lowerBoundAJ}, it is enough to ensure that, at each level~$j$, we have
\begin{equation}
\label{proofCardAj}
\big|A_0\setminus A_j\big|
\ \leqslant\ 
2v(6n)^d\,
\sum_{i=0}^{\Ent{(j-1)/2}} \frac{2^{i}}{(D_{2i})^d}\,.
\end{equation}
Indeed, if the above bound is satisfied, then we have
$$\big|A_0\setminus A_{\calJ}\big|
\ \leqslant\ 
2v\,\left(\frac{6n}{D_0}\right)^d\,
\sum_{i=0}^{+\infty}\left(\frac{2}{6^{2d}}\right)^i
\ =\ 
2v\,\left(\frac{6n}{D_0}\right)^d
\times\frac{1}{1-2/6^{2d}}
\ \leqslant\ 
4v\,\left(\frac{6n}{D_0}\right)^d\,.$$

Now assume that~$j\in\N$ and that the sets~$A_0,\,\ldots,\,A_{2j}$ and the partitions~$\calC_0,\,\ldots,\,\calC_{2j}$ are constructed and satisfy the two requirements~\textit{(\ref{conditionCard})-(\ref{conditionMerge})} and the property~\reff{proofCardAj}.
Note that this latter property, together with the assumption~$|A_0|\geqslant 4v(6n/D_0)^d$, ensure that~$A_{2j}\neq\varnothing$.
If~$|\calC_{2j}|=1$, we let~${\calJ}=2j$ and we stop here, the hierarchy being complete.
Thus, we now assume that~$|\calC_{2j}|\geqslant 2$.
Defining the sets ($\calN$ stands for \guillemets{narrow},~$\calM$ stands for \guillemets{merging},~$\calB$ for \guillemets{big} and~$\calR$ for \guillemets{rubbish})
$$\calN
\ =\ \bigg\{C\in\calC_{2j}\ :\ \diam\,C\,\leqslant\,\frac{D_{2j}}{6}\bigg\}\,,$$
$$\calM\ =\ \bigg\{C\in\calN\ :\ \exists\,C'\in\calN\setminus\{C\}\,,\ \diam\big(C\cup C'\big)\,\leqslant\,\frac{D_{2j}}{2}\bigg\}\,,$$
$$\calB
\ =\ \Big\{C\in\calN\setminus\calM\quad :\quad
|C|\,\geqslant\,2^{j+1} v\Big\}
\qquadet
\calR
\ =\ \calN\setminus\big(\calM\cup\calB\big)\,,$$
we let
$$A_{2j+2}
\ =\ A_{2j+1}
\ =\ \bigcup_{C\,\in\,\calC_{2j}\setminus\calR} C
\ =\ \bigcup_{C\,\in\,\calM\,\cup\,\calB\,\cup\,(\calC_{2j}\setminus\calN)} C\,,$$
meaning that~$\calR$ corresponds to the part that is thrown away between~$A_{2j}$ and~$A_{2j+1}$.

To construct the partitions~$\calC_{2j+1}$ and~$\calC_{2j+2}$, we consider the graph structure on the set~$\calM$ which is obtained by declaring any two sets~$C_1,\,C_2\in\calM$ to be neighbours if and only if~\smash{$\diam(C_1\cup C_2)\leqslant D_{2j}/2$}.
This yields a graph with no isolated point, by definition of~$\calM$.
Now, we have the following Lemma whose simple proof is deferred to the
appendix~\ref{sectionProofLemmaGroup2or3}:

\begin{lemma}
\label{lemmaGroup2or3}
For every finite undirected graph~$G=(V,\,E)$ with no isolated point, there exists a partition of~$V$ into sets of cardinality~$2$ or~$3$ and diameter (for the graph distance on~$G$) at most~$2$.
\end{lemma}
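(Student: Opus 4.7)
The plan is to construct the desired partition from a maximum matching of~$G$. Let $M\subseteq E$ be a maximum matching and let $U=V\setminus V(M)$ be the set of unmatched vertices. By the maximality of~$M$, no edge joins two vertices of~$U$, so $U$ is an independent set in~$G$. Since $G$ has no isolated vertex, each $u\in U$ has at least one neighbour, and independence of~$U$ forces this neighbour to lie in $V(M)$. For every $u\in U$ I would fix one such matched neighbour $f(u)\in V(M)$.

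For each matched pair $P=\{v,w\}\in M$, set $S_v=f^{-1}(v)$ and $S_w=f^{-1}(w)$, so that the family $\{P\cup S_v\cup S_w:P\in M\}$ is a partition of~$V$. I would then produce an intermediate list of \emph{pieces} according to which of $S_v,S_w$ are empty. When $S_v=S_w=\varnothing$, the piece is just $\{v,w\}$. When exactly one of the two, say $S_v$, is empty, the piece is $\{v,w\}\cup S_w$: every vertex of it lies at $G$-distance at most~$1$ from~$w$ (since $v\sim w$ and $u\sim w$ for each $u\in S_w$), so this piece has diameter at most~$2$. When both $S_v$ and $S_w$ are non-empty, I would split the pair and form the two pieces $\{v\}\cup S_v$ and $\{w\}\cup S_w$; in each, every vertex lies at distance at most~$1$ from the central matched vertex, hence the piece has diameter at most~$2$.

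Each piece thus has size at least~$2$ and diameter at most~$2$ in~$G$, but it may be larger than~$3$. To finish, I would invoke the elementary fact that every finite set of cardinality~$n\geqslant 2$ admits a partition into blocks of size~$2$ or~$3$ (write $n=2a+3b$ with $a,b\geqslant 0$ and $a+b\geqslant 1$, choosing $b\in\{0,1\}$ according to the parity of~$n$), and apply this sub-partition to each piece. Since any sub-block of a piece inherits a diameter at most equal to that of the piece, every resulting block has diameter at most~$2$ in~$G$. Taking the union over all pieces yields the desired partition of~$V$.

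The only substantive verification is the diameter bound in the second and third cases, which hinges on all the relevant unmatched vertices being adjacent to a common matched vertex ($v$ or~$w$); the independence of~$U$ (a direct consequence of the maximality of~$M$) is what allows us to treat the two halves of a pair separately and thus avoid having to connect two unmatched vertices by a direct edge, which would otherwise be problematic if elements of~$S_v$ and~$S_w$ were placed in the same block. Everything else is routine bookkeeping.
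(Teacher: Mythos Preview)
Your proof is correct and takes a genuinely different route from the paper. The paper first reduces to the connected case and then argues by induction on~$|V|$: it picks a spanning tree, takes a leaf~$x$ of maximal depth, and removes either~$\{x,y\}$ if~$x$ has a sister leaf~$y$ (both share the same parent, so the pair has diameter~$2$) or~$\{x,z\}$ with~$z$ the parent of~$x$ otherwise; in either case the remaining graph stays connected and the induction proceeds.

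Your approach via a maximum matching is more direct and non-inductive: maximality of~$M$ makes the unmatched set~$U$ independent, so every unmatched vertex can be attached to a matched endpoint, and each resulting piece is contained in the closed neighbourhood of a single vertex (hence has diameter at most~$2$); the final sub-partition into blocks of size~$2$ or~$3$ is then purely combinatorial and preserves the diameter bound. Your argument has the advantage of handling all connected components at once without a separate reduction, and it is essentially algorithmic (find a maximum matching, then locally assign). The paper's inductive argument is arguably shorter to write down and avoids the case analysis on~$(S_v,S_w)$, but both are elementary and of comparable length.
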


This Lemma yields a partition~$\Pi$ of~$\calM$ into 
sets of cardinality~$2$ or~$3$ and diameter (for the graph distance we just defined) at most~$2$.
The partition~$\calC_{2j+2}$ is then obtained by keeping the sets in~$\calC_{2j}\setminus\calN$ and in~$\calB$ as they are and by merging the sets in~$\calM$ in pairs or triples according to this partition~$\Pi$.
Formally, we let
$$\calC_{2j+2}
\ =\ \big(\calC_{2j}\setminus\calN\big)
\,\cup\,\calB
\,\cup\,\Bigg\{\,\bigcup_{C\in P} C\,,\ \  P\in\Pi\,\Bigg\}\,.$$

To construct the intermediate partition~$\calC_{2j+1}$, we consider a partition~$\Pi'$ obtained from~$\Pi$ by keeping the sets of cardinality~$2$ and splitting each set of cardinality~$3$ into one set of cardinality~$2$ and one singleton (chosen arbitrarily), and we let
$$\calC_{2j+1}
\ =\ \big(\calC_{2j}\setminus\calN\big)
\,\cup\,\calB
\,\cup\,\Bigg\{\,\bigcup_{C\in P} C\,,\ \  P\in\Pi'\,\Bigg\}\,.$$

We now check that the above construction satisfies the required conditions.
First, merging two or three clusters of size at least~$2^{j}v$ yields a cluster of size at least~$2^{j+1} v$.
Besides, if~$C\in\calC_{2j}\setminus\calN$, meaning that~$\diam\,C>D_{2j}/6= D_{2j-1}$, then the condition~\textit{(\ref{conditionMerge})} tells us that~$C$ does not come from a previous merging, meaning that~$C\in\calC_0$, which implies that~$C$ is~\smash{$\Ent{D_0/(12v)}$}-connected, whence
$$|C|
\ \geqslant\ \frac{\diam\,C}{D_0/(12v)}
\ \geqslant\ \frac{2v D_{2j}}{D_0}
\ =\ 2\times 6^{2j}v
\ \geqslant\ 2^{j+1} v\,.$$
Therefore, the condition~\textit{(\ref{conditionCard})} holds at ranks~$2j+1$ and~$2j+2$.

To prove~\textit{(\ref{conditionMerge})}, it remains to check that, for every~$P\in\Pi$, we have
$$\diam\ \bigcup_{C\in P} C
\ =\ \sup_{C_1,\,C_2\in P}
\ \sup_{x\in C_1,\,y\in C_2}
\,d(x,\,y)
\ \leqslant\ D_{2j}\,.$$
Let~$P\in\Pi$, let~$C_1,\,C_2\in P$, let~$x\in C_1$ and~$y\in C_2$.
By definition of~$\Pi$, we know that the graph distance between~$C_1$ and~$C_2$ in~$\calM$ is at most~$2$.
If~$C_1=C_2$ or if the graph distance is~$1$, then we have~$d(x,\,y)\leqslant \diam(C_1\cup C_2)\leqslant D_{2j}/2\leqslant D_{2j}$.
If the graph distance between~$C_1$ and~$C_2$ in~$\calM$ is~$2$, then there exists~$C_3\in\calM$ such that~$\diam(C_1\cup C_3)\leqslant D_{2j}/2$ and~\smash{$\diam(C_2\cup C_3)\leqslant D_{2j}/2$}.
Choosing an arbitrary~$z\in C_3$, we deduce that
$$d(x,\,y)
\ \leqslant\ d(x,\,z)+d(z,\,y)
\ \leqslant\ \diam\big(C_1\cup C_3\big)+\diam\big(C_3\cup C_2\big)
\ \leqslant\ D_{2j}\,,$$
which shows that, in all cases,~\textit{(\ref{conditionMerge})} holds at ranks~$2j+1$ and~$2j+2$.

To prove that~\reff{proofCardAj} remains true for~$2j+1$ and~$2j+2$., we only need to show that
\begin{equation}
\label{throwAwayEachStep}
\big|A_{2j+1}\setminus A_{2j}\big|
\ \leqslant\ 2^{j+1}v\left(\frac{6n}{D_{2j}}\right)^d\,.
\end{equation}
By definition of~$\calR$, we know that for every cluster~$C\in\calR$, we have~$|C|<2^{j+1}v$ and~\smash{$\diam\,C\leqslant D_{2j}/6$}.
Thus, we may write
\begin{equation}
\label{controlSingletons}
\big|A_{2j+1}\setminus A_{2j}\big|
\ =\ \bigg|\bigcup_{C\,\in\,\calR} C\,\bigg|
\ \leqslant\ \abs{\calR} 2^{j+1} v\,.
\end{equation}
If~$C_1,\,C_2\in\calR$ with~$C_1\neq C_2$, we have~$\diam(C_1\cup C_2)>D_{2j}/2$ whence, by the triangle inequality,
$$d(C_1,\,C_2)
\ \geqslant\ \diam(C_1\cup C_2)-\diam\,C_1-\diam\,C_2\\
\ >\ \frac{D_{2j}}{2}-2\,\frac{D_{2j}}{6}\\
\ =\ \frac{D_{2j}}{6}\,.$$
We now distinguish between two cases.

On the one hand, if~$D_{2j}/6\geqslant n$, then~$\calN=\calC_{2j}$ and, since we assumed that~$|\calC_{2j}|\geqslant 2$, we also have~$\calM=\calN=\calC_{2j}$, whence~$\calR=\varnothing$.

On the other hand, if~$D_{2j}/6< n$, choosing a vertex in each set of~$\calR$, we deduce that the closed balls of radius~$\Ceil{D_{2j}/12}$ centered on these points are pairwise disjoint, whence, using our formula~\reff{volumeBall} for the volume of the ball,
$$|\calR|\ \leqslant\ \frac{\abs{\torus}}{\big(2\Ceil{D_{2j}/12}+1\big)^d}
\ =\ \left(\frac{6n}{D_{2j}}\right)^d\,.$$
In both cases, going back to~(\ref{controlSingletons}), we obtain~\reff{throwAwayEachStep}, implying that the property~\reff{proofCardAj} is inherited at rank~$2j+1$, and thus also at rank~$2j+2$, since~$A_{2j+2}=A_{2j+1}$.
The result thus follows by induction.
\end{proof}

\subsection{Dormitory hierarchy for low sleep rate: proof of Lemma~\ref{lemmaDefHierarchy2Dlow}}

With the above recursive construction, we easily obtain Lemma~\ref{lemmaDefHierarchy2Dlow}.

\begin{proof}[Proof of Lemma~\ref{lemmaDefHierarchy2Dlow}]
Let~$D_0\geqslant 1$, let~$n\geqslant 1$ and~$A\subset\Z_n^2$ with~$A\geqslant 288 n^2/(D_0)^2$.
For first level of the hierarchy, we simply take~$A_0=A$ and~\smash{$\calC_0=\big\{\{x\},\,x\in A\big\}$}.
The result then follows by applying Lemma~\ref{lemmaDefHierarchy} with~$v=1$ to construct the rest of the hierarchy.
\end{proof}

\subsection{Dormitory hierarchy for high sleep rate: proof of Lemma~\ref{lemmaDefHierarchy2Dhigh}}

We now turn to the proof of Lemma~\ref{lemmaDefHierarchy2Dhigh}.

\begin{proof}[Proof of Lemma~\ref{lemmaDefHierarchy2Dhigh}]
Let~$r\geqslant 1$, let~$D_j=6^j\times 96r^3$, let~$n\geqslant 2r+1$ and let~$A\subset\Z_n^2$ such that~\smash{$|A|\geqslant n^2/2$}.
The set~$A_0$ can be defined as
$$A_0\ =\ \Big\{\,x\in A\quad :\quad 
\exists\,y\in B(x,\,2r)\,,\quad
\big|A\cap B(y,\,2r)\big|\ \geqslant\ r^2\,\Big\}\,.$$
Then, we consider the relation on this set~$A_0$ obtained by declaring two sites~$x,\,y\in A$ to be connected if~$d(x,\,y)\leqslant 8r$, and we simply define~$\calC_0$ as the collection of the connected components of~$A_0$ for this relation (in a word, the~$8r$-connected components of~$A_0$).

The property~\reff{conditionDense} follows from this construction, implying also that~\textit{(\ref{conditionCard})} holds at rank~$j=0$ (with~$v=r^2$).
We now look for a lower bound on~$|A_0|$.
To this end, we claim that, for every~$x\in\Z_n^2$, we have
\begin{equation}
\label{claimA0}
\big|(A\setminus A_0)\cap B(x,\,r)\big|
\ <\ r^2\,.
\end{equation}
Indeed, if~$x\in\Z_n^2$ does not satisfy the above inequality, then for every~\smash{$y\in(A\setminus A_0)\cap B(x,\,r)$}, we have
$$\big|A\cap B(y,\,2r)\big|
\ \geqslant\ \big|A\cap B(x,\,r)\big|
\ \geqslant\ \big|(A\setminus A_0)\cap B(x,\,r)\big|
\ \geqslant\ r^2\,,$$
which contradicts the fact that~$y\notin A_0$.
Using this claim~\reff{claimA0}, we may write (denoting by~$0$ an arbitrary point of the torus),
\begin{multline*}
\big|A\setminus A_0\big|
\times \big|B(0,\,r)\big|
\ =\ \sum_{y\in\Z_n^2}
\mathbf{1}_{\{y\in A\setminus A_0\}}\,
\big|B(y,\,r)\big|
\ =\ \sum_{y\in\Z_n^2}
\mathbf{1}_{\{y\in A\setminus A_0\}}\,
\sum_{x\in\Z_n^2}
\mathbf{1}_{\{d(x,\,y)\leqslant r\}}\\
\ =\ \sum_{x\in\Z_n^2}\,
\sum_{y\in\Z_n^2}
\mathbf{1}_{\{y\in A\setminus A_0\}}\,
\mathbf{1}_{\{d(x,\,y)\leqslant r\}}
\ =\ \sum_{x\in\Z_n^2}
\big|(A\setminus A_0)\cap B(x,\,r)\big|
\ \leqslant\ r^2\,n^2\,,
\end{multline*}
which implies that
$$\big|A\setminus A_0\big|
\ \leqslant\ 
\frac{r^2\,n^2}{\big|B(0,\,r)\big|}
\ =\ \frac{r^2\,n^2}{(2r+1)^2}
\ \leqslant\ \frac{n^2}{4}\,,$$
using that~$n\geqslant 2r+1$.
Since~$n^2/4<n^2/(64r^4)=4v(6n/D_0)^2$ and~$8r=\Ent{D_0/(12v)}$, we may apply Lemma~\ref{lemmaDefHierarchy} with~$v=r^2$ to obtain the whole hierarchy.
\end{proof}

\section{Initialization: proof of Lemma~\ref{lemmaInit2Dhigh}}
\label{sectionInit}

The goal of this section is to prove that, for every set~$C\in\calC_0$, the number of loops of colour~$0$ produced by the distinguished vertex~$\dist_C$ while~$C$ is stabilized (using an appropriate toppling procedure) is exponentially large in the size of~$C$.

\subsection{Active particles amidst sleeping ones}

The following Lemma tells us that, when at least a fraction of the particles are already sleeping, one can find an active particle surrounded by many sleeping particles so that, if toppled, this particle has good chances to wake up many particles.
Recall the definition of~$r$-connectedness which was given in paragraph~\ref{subsectionNotation}.

\begin{lemma}
\label{lemmaActiveSleepingNearby}
Let~$d,\,n,\,r,\,v\geqslant 1$ and~$\beta\in(0,1)$.
Let~$C\subset\torus$ be a non-empty and~$8r$-connected subset of the torus such that, for every~$x\in C$, we have~$|C\cap B(x,\,4r)|\geqslant v$.
Then, for every~$R\subset C$ such that~$0<|R|\leqslant \beta|C|$, there exists~$x_0\in R$ such that
$$\big|(C\setminus R)\cap B(x_0,\,16r)\big|\ \geqslant\ (1-\beta)\,v\,.$$
\end{lemma}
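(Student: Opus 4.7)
The plan is to decouple the problem into first producing a point $\bar x \in C$ whose $4r$-ball already contains at least $(1-\beta)v$ sleeping sites, and then locating an active $x^\star \in R$ within distance $12r$ of $\bar x$, so that $B(\bar x,4r) \subset B(x^\star,16r)$ and the desired bound transfers automatically.

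For the first step I would perform a simple double counting over $C$. For every $z \in C \setminus R$ the hypothesis guarantees $|C \cap B(z, 4r)| \geq v$, so exchanging the order of summation yields
$$\sum_{x \in C}\bigl|(C \setminus R)\cap B(x, 4r)\bigr| \;=\; \sum_{z \in C \setminus R}\bigl|C \cap B(z, 4r)\bigr| \;\geq\; v\,|C \setminus R| \;\geq\; v(1-\beta)|C|,$$
using $|R| \leq \beta|C|$. Averaging, there exists $\bar x \in C$ with $|(C \setminus R) \cap B(\bar x, 4r)| \geq (1-\beta)v$.

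For the second step I would split on $d(\bar x, R)$. If $d(\bar x, R) \leq 12r$, any witness $x^\star \in R$ at distance at most $12r$ from $\bar x$ satisfies $B(\bar x,4r) \subset B(x^\star,16r)$ and we conclude. Otherwise $\bar x$ lies in $C \setminus R$, and the $8r$-connectedness of $C$ provides a sequence $\bar x = x_0, x_1, \ldots, x_m \in C$ with $x_m \in R$ and $d(x_i, x_{i+1}) \leq 8r$ for each $i$. Letting $i$ be the smallest index with $d(x_{i+1}, R) \leq 12r$, the triangle inequality forces $d(x_{i+1}, R) > 12r - 8r = 4r$, so $B(x_{i+1}, 4r) \cap R = \emptyset$ and therefore $|(C \setminus R) \cap B(x_{i+1}, 4r)| = |C \cap B(x_{i+1}, 4r)| \geq v$ by hypothesis. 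Choosing $x^\star \in R$ at distance at most $12r$ from $x_{i+1}$ embeds this $v$-rich ball into $B(x^\star, 16r)$, and the conclusion follows since $v \geq (1-\beta)v$.

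The only subtle point is the case where the averaging point $\bar x$ happens to sit far from $R$, and this is precisely where the geometric assumption of $8r$-connectedness is essential: it lets us slide along a discrete path until we land on a vertex $y$ that is both close enough to $R$ to transfer the bound through the enlarged ball $B(x^\star, 16r)$, yet far enough from $R$ for the ball $B(y,4r)$ to miss $R$ entirely. Without such a connectedness hypothesis a sparse $R$ deep inside a large $C$ could force every active vertex to see only a handful of sleeping neighbours, and the conclusion would fail.
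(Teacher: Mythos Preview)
Your proof is correct. The second step---sliding along an $8r$-path from a sleeping-rich point toward $R$ until the $4r$-ball just misses $R$---is essentially identical to what the paper does. The difference lies in the first step: to produce the initial ``sleeping-rich'' point, the paper picks a maximal $8r$-separated subset $Y\subset C$, observes that the balls $B(y,4r)$ are disjoint (giving $|C|\geqslant v|Y|$) and that the balls $B(y,8r)$ cover $C$, and then applies pigeonhole to find $y\in Y$ with $|(C\setminus R)\cap B(y,8r)|\geqslant(1-\beta)v$. Your double-counting argument is more elementary and more direct, and it even yields the slightly stronger conclusion that the sleeping-rich ball has radius $4r$ rather than $8r$ (though this gain is not used downstream). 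The packing argument buys nothing extra here; your route is cleaner.
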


\begin{proof}
Let~$d,\,n,\,r,\,v\geqslant 1$,~$\beta\in(0,1)$, and~$C\subset\torus$ be as in the statement, and let~$R\subset C$ such that~$0<|R|\leqslant \beta|C|$.
Let us consider the set
$$\calY\ =\ \Big\{\,Y\subset C\ :\ \forall\,y,\,y'\in Y\,,\ y\neq y'\ \Rightarrow\ d(y,\,y')> 8r\,\Big\}\,,$$
and choose~$Y\in\calY$ which is maximal for inclusion (we have~$\calY\neq\varnothing$ since~$\varnothing\in\calY$).
Using our assumption on~$C$, we can write
\begin{equation}
\label{lowerBoundCardinalC}
|C|\ \geqslant\ \bigg|C\cap\bigcup_{y\in Y} B(y,\,4r)\bigg|
\ =\ \sum_{y\in Y}\big|C\cap B(y,\,4r)\big|
\ \geqslant\ v\,|Y|\,,
\end{equation}
where we used that the above union is disjoint.
We now notice that
$$C\ \subset\ \bigcup_{y\in Y}B(y,\,8r)\,,$$
otherwise there would exist a point~$z\in C\setminus \cup_{y\in Y}B(y,\,8r)$ and we would have~$Y\sqcup\{z\}\in\calY$, which would contradict the maximality of~$Y$.
Therefore, we have
$$\big|C\setminus R\big|
\ =\ \bigg|\bigcup_{y\in Y}\big((C\setminus R)\cap B(y,\,8r)\big)\bigg|
\ \leqslant\ \sum_{y\in Y}\big|(C\setminus R)\cap B(y,\,8r)\big|\,.$$
Combining this with~(\ref{lowerBoundCardinalC}) and recalling that~$|C\setminus R|\geqslant (1-\beta)|C|$, we obtain that
$$\sum_{y\in Y}\big|(C\setminus R)
\cap B(y,\,8r)\big|
\ \geqslant\ \big|C\setminus R\big|
\ \geqslant\ (1-\beta)|C|
\ \geqslant\ (1-\beta)\,v\,|Y|\,.$$
Therefore, by the pigeonhole principle, we can find~$y\in Y$ such that
\begin{equation}
\label{denseBowl}
\big|(C\setminus R)\cap B(y,\,8r)\big|
\ \geqslant\ (1-\beta)\,v\,.
\end{equation}
If~$y\in R$, then~$x_0=y$ is a solution of the problem.
Thus, we assume henceforth that~$y\notin R$.
Since~$C$ is~$8r$-connected and~$R\neq\varnothing$, we may consider a path~$y=y_0,\,y_1,\,\ldots,\,y_k$ with~\smash{$k\in\N$},~$y_k\in R$ and, for every~$j<k$,~$y_j\in C\setminus R$ and~$d(y_{j},\,y_{j+1})\leqslant 8r$.
We now consider
$$j_0
\ =\ \min\,\Big\{j\in\{0,\,\ldots,\,k\}\ :\ B(y_j,\,4r)\cap R\neq\varnothing\,\Big\}\,,$$
that is to say, we look at the first point of the path which sees an active site nearby.
The above set is non-empty because at least~$y_k\in R$.
We then choose a point~\smash{$x_0\in B(y_{j_0},\,4r)\cap R$}.
If~$j_0=0$, then we have
$$B(x_0,\,16r)
\ \supset\ B(y_0,\,8r)
\ =\ B(y,\,8r)\,,$$
whence, recalling~\reff{denseBowl},
$$\big|(C\setminus R)\cap B(x_0,\,16r)\big|
\ \geqslant\ \big|(C\setminus R)\cap B(y,\,8r)\big|
\ \geqslant\ (1-\beta)\,v\,.$$
Otherwise, if~$j_0>0$, then, by minimality of~$j_0$, we have~$B(y_{j_0-1},\,4r)\cap R
=\varnothing$.
However, following our assumption on~$C$, we know that~$|C\cap B(y_{j_0-1},\,4r)|\geqslant v$.
Together, these two facts imply that
$$\big|(C\setminus R)\cap B(y_{j_0-1},\,4r)\big|
\ \geqslant\ v\,.$$
Now, since
$$d\big(x_0,\,y_{j_0-1}\big)
\ \leqslant\ d\big(x_0,\,y_{j_0}\big)
+d\big(y_{j_0},\,y_{j_0-1}\big)
\ \leqslant\ 4r+8r
\ =\ 12r\,,$$
we deduce that~$B(x_0,\,16r)
\supset B(y_{j_0-1},\,4r)$, whence
$$\big|(C\setminus R)\cap B(x_0,\,16r)\big|
\ \geqslant\ v
\ \geqslant\ (1-\beta)\,v\,,$$
concluding the proof of the Lemma.
\end{proof}

\subsection{The metastability phenomenon}

By the metastability phenomenon we mean that, starting from everyone active inside a set~$C\in\calC_0$, during the stabilization of~$C$ the configuration escapes and returns an exponentially large number of times in the set of configurations with \guillemets{many} active particles.
This Lemma is used again when~$d\geqslant 3$ (see section~\ref{section3D}), taking~$r=+\infty$ (with the convention that~$\Upsilon_d(\infty)=\lim_{r\to\infty}\Upsilon_d(r)$).
Recall the definition of toppling procedures which was given in section~\ref{subsectionLoopRep}.

\begin{lemma}
\label{lemmaMetastability}
Let~$d,\,v\geqslant 1$, let~$\lambda,\,\alpha>0$,~$\beta\in(0,1)$ and~$r\in\N\cup\{\infty\}\setminus\{0\}$ such that
\begin{equation}
\label{conditionMetastability}
\lambda\big(e^{\alpha}-1\big)
\ \leqslant\ \Upsilon_d(16r)\big(1-e^{\alpha(1-(1-\beta)v)}\big)\,.
\end{equation}
For every~$n\geqslant 1$ and every subset~$A\subset\torus$, if~$(A_j,\,\calC_j)_{j\leqslant {\calJ}}$ is a dormitory hierarchy on~$A$ such that~$|C\cap B(x,\,4r)|\geqslant v$ for every~$C\in\calC_0$ and every~$x\in C$, then, for every~$C\in\calC_0$ there exists a~$C$-toppling procedure~$f$ such that, considering the set of configurations
$$\calB
\ =\ \Big\{\,R\subset A\quad:\quad|R\cap C|\,>\,\beta|C|\,\Big\}\,,$$
the number~$\calN_{\calB}$ of visits of this set~$\calB$ during the stabilization of~$C$ (i.e., one plus the number of returns to~$\calB$ from a configuration out of~$\calB$) dominates a geometric random variable with parameter~\smash{$\exp\big(-\alpha\Ent{\beta|C|}\big)$}.
Moreover, this domination is uniform with respect to the first configuration~$R\notin\calB$ reached by the system, that is to say, writing, for every~$t\in\N$,
$$(R_t,\,h_t,\,\ell_t)
\ =\ \big(\Phi_{C}\big)^{(t)}(A,\,0,\,0)
\ =\ \big(\Phi_{C}\big)^{(t)}(C,\,0,\,0)$$
and defining~$\tau=\inf\{t\in\N\,:\,R_t\notin\calB\}$, then for every~$R\subset A$ such that~$\Proba(R_\tau=R)>0$, conditionally on~$\{R_\tau=R\}$, we have the aforementioned stochastic domination.
\end{lemma}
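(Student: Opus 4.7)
The plan is to tailor the toppling procedure $f$ to Lemma~\ref{lemmaActiveSleepingNearby} and then reduce the geometric domination to a single-round escape estimate. Concretely, I will set $f(R) = \dist_C$ whenever $\dist_C \in R$ (as required by the priority rule) and, when $\dist_C \notin R$ and $|R| \leq \beta |C|$, take $f(R) = x^\star$ as provided by Lemma~\ref{lemmaActiveSleepingNearby}, so that $(C \setminus R) \cap B(x^\star, 16r)$ contains at least $(1-\beta)v$ sleeping sites; in the irrelevant case $\dist_C \notin R$ with $|R| > \beta|C|$ any element of $R$ will do. Once $f$ is fixed, the strong Markov property applied at each return to $\calB$ reduces the geometric domination to the following escape estimate: from the first exit configuration $R \notin \calB$ with $R \cap C \neq \varnothing$, the probability to reach $R \cap C = \varnothing$ before returning to $\calB$ is at most $\exp(-\alpha \Ent{\beta|C|})$, uniformly in that exit configuration.

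To prove this escape estimate, the key observation is that $\dist_C$-topplings leave the quantity $\tilde{k}_t := |R_t \cap (C \setminus \{\dist_C\})|$ invariant: a $\dist_C$-sleep removes only $\dist_C$ from $R$, and a $\dist_C$-loop wakes no site of $C$, since $w(\dist_C, j) \cap C = \varnothing$ for every colour $j \geq 0$ by definition~(\ref{defW}) (the distinguished vertex's coloured loops are either void or target clusters disjoint from $C_0(\dist_C) = C$). I will use $V_t := \exp(-\alpha \tilde{k}_t)$ as a supermartingale up to the stopping time $\sigma := \inf\{t : \tilde{k}_t = 0 \text{ or } R_t \in \calB\}$. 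Reaching $\tilde{k}_t = 0$ before $R_t$ re-enters $\calB$ forces $R \cap C = \varnothing$ (only $\dist_C$-topplings remain, and these put $\dist_C$ to sleep in finite time without activating anything of $C$); and upon exit from $\calB$ one has $\tilde{k}_0 = \Ent{\beta|C|}$, since exits occur only via sleep steps and a short case check (according to whether $\dist_C$ was the sleeping site) yields the equality in both scenarios.

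The supermartingale property is trivial at steps toppling $\dist_C$ (where $V_t$ is unchanged). At steps toppling $x^\star \neq \dist_C$ selected by Lemma~\ref{lemmaActiveSleepingNearby}, a sleep (probability $\lambda/(1+\lambda)$) multiplies $V_t$ by $e^\alpha$, and a loop (probability $1/(1+\lambda)$) multiplies $V_t$ by $e^{-\alpha Y}$, where $Y \geq 0$ is the number of sites of $C \setminus \{\dist_C\}$ awakened by the loop $\Gamma$; the required inequality $\mathbb{E}[V_{t+1}/V_t \mid R_t] \leq 1$ reduces to $\lambda(e^\alpha - 1) \leq 1 - \mathbb{E}[e^{-\alpha Y}]$. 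Setting $S' := (C \setminus R) \cap B(x^\star, 16r) \setminus \{\dist_C\}$, of cardinality at least $(1-\beta)v - 1$ by Lemma~\ref{lemmaActiveSleepingNearby}, and using $Y \geq |\Gamma \cap S'|$, I will establish the refined bound
\[
\mathbb{E}[e^{-\alpha Y}] \;\leq\; 1 - \Upsilon_d(16r) \bigl( 1 - e^{\alpha(1-(1-\beta)v)} \bigr),
\]
which, combined with hypothesis~(\ref{conditionMetastability}), closes the supermartingale inequality. Optional stopping at $\sigma$ then gives $\Proba(\tilde{k}_\sigma = 0) \leq V_0 = \exp(-\alpha \Ent{\beta|C|})$, uniformly in the first exit configuration, completing the escape estimate and hence the geometric domination of $\calN_\calB$.

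The hard part will be the refined bound on $\mathbb{E}[e^{-\alpha Y}]$ above. The elementary splitting on $\{\Gamma \cap S' = \varnothing\}$, whose complement has probability at least $\Upsilon_d(16r)$ by the single-site bound, combined with $|\Gamma \cap S'| \geq 1$ on the complement, only yields $\mathbb{E}[e^{-\alpha Y}] \leq 1 - \Upsilon_d(16r)(1-e^{-\alpha})$, which falls short when $(1-\beta)v$ is large. To extract the factor $(1-\beta)v$ in the exponent one must exploit that $S'$ is geometrically dense inside $B(x^\star, 16r)$: conditionally on the first site of $S'$ visited by $\Gamma$, the return excursion to $x^\star$ has uniformly positive probability to sweep through a large fraction of the $(1-\beta)v - 1$ remaining sleeping sites of $S'$, effectively replacing the ``$-\alpha$'' by ``$-\alpha((1-\beta)v - 1)$'' in the exponent. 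Making this geometric gain rigorous is the technical heart of the argument.
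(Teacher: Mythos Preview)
Your overall architecture is exactly that of the paper: construct $f$ via Lemma~\ref{lemmaActiveSleepingNearby} with priority to $\dist_C$, observe that $\dist_C$-topplings leave $|R_t\cap(C\setminus\{\dist_C\})|$ invariant, run a supermartingale $e^{-\alpha N_t}$ and apply optional stopping, reducing everything to the single inequality
\[
\mathbb{E}\big[e^{-\alpha Y}\big]\ \leqslant\ 1-\Upsilon_d(16r)\big(1-e^{\alpha(1-(1-\beta)v)}\big)\,.
\]
The exit configuration analysis ($\dist_C$ asleep, $|R\cap C|=\lfloor\beta|C|\rfloor$) is also correct.

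Where you go astray is in assessing the difficulty of this last inequality. No geometric sweeping argument is needed; the bound is purely combinatorial. Write $Y\geqslant\sum_{y\in S'}\mathbf{1}_{\{y\in\Gamma\}}$ with $|S'|\geqslant(1-\beta)v-1$, and note that each indicator stochastically dominates a Bernoulli with parameter $p=\Upsilon_d(16r)$. The paper then invokes the elementary Lemma~\ref{lemmaSumBernoulli}: for any (possibly dependent) Bernoulli$(p)$ variables $X_1,\ldots,X_n$ and any $c>0$,
\[
\mathbb{E}\big[e^{-c(X_1+\cdots+X_n)}\big]\ \leqslant\ 1-p+pe^{-cn}\,.
\]
This is proved by a three-line induction on $n$ (condition on $X_{n+1}$ and use the trivial lower bound $e^{-c\sum_{i\leqslant n}X_i}\geqslant e^{-cn}$). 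Applying it with $c=\alpha$, $p=\Upsilon_d(16r)$ and $n=\lceil(1-\beta)v-1\rceil$ yields precisely the bound you need. Your proposed route through ``conditioning on the first visited site and sweeping through the rest'' would require controlling the law of the excursion after hitting $S'$, which is both unnecessary and substantially harder to make rigorous.
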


\begin{proof}
With the notation of the statement, Lemma~\ref{lemmaActiveSleepingNearby} (which remains true if~$r=\infty$) tells us that there exists a function~\smash{$f_0:\calP(C)\setminus\{\varnothing\}\to C$} such that, for every~$R\subset C$ with~$0<|R|\leqslant \beta|C|$ (for the other configurations, the value of~$f_0$ can be chosen arbitrarily), we have~$f_0(R)\in R$ and
\begin{equation}
\label{expCCaround}
\big|\big(C\setminus R\big)\cap B\big(f_0(R),\,16r\big) \big|\ \geqslant\ (1-\beta)\,v\,.
\end{equation}
We now turn this function into a~$C$-toppling procedure~$f$ by adding the rule that that the distinguished vertex is toppled in priority: thus, for every configuration~$R\in\calP(C)\setminus\{\varnothing\}$, we set
$$f(R)
\ =\ \begin{cases}
\dist_C & \text{ if }\dist_C\in R\,,\\
f_0(R) & \text{ otherwise.}
\end{cases}$$
Namely, we topple the distinguished particle if it is awaken, else if at least a fraction~$1-\beta$ is sleeping we topple a particle surrounded by many sleeping particles, and otherwise we topple whatever active particle.
Let us consider the \guillemets{boundary} set:
\begin{equation}
\label{defPartialB}
\partial\calB\ =\ \Big\{\,R\subset A\setminus\{\dist_C\}\quad :\quad 
|R\cap C|=\big\lfloor\beta|C|\big\rfloor
\,\Big\}
\,.
\end{equation}
We claim that~$\Proba(R_\tau\in\partial\calB)=1$, that is to say, when we exit from~$\calB$ we necessarily arrive in~$\partial\calB$.
Indeed, one can only exit from~$\calB$ when a particle falls asleep.
If this site~$x$ which falls asleep when exiting from~$\calB$ is not~$\dist_C$, it implies that~$\dist_C$ was already sleeping, because otherwise~$\dist_C$ would have been toppled instead of~$x$ (recall that the distinguished vertex has priority over all other sites).
Therefore, when we exit from~$\calB$, the distinguished vertex is always sleeping.

Thus, to prove the result, we may consider the stabilization of~$C$ starting from a deterministic initial configuration~$R_0\in\partial\calB$ and, for every~$t\in\N$, we write
$$(R_t,\,h_t,\,\ell_t)
\ =\ \big(\Phi_{C}\big)^{(t)}(R_0,\,0,\,0)\,,$$
overriding the notation of the statement, and we prove that, starting from the initial configuration, one plus the number of visits of~$\calB$ dominates a geometric variable.
The toppling procedure defines a Markov chain on the state space~$\calP(A)$, and we consider the two following stopping times of this Markov chain:
$$T_\calB\ =\ \inf\big\{t\in\N\ :\ R_t\in\calB\big\}
\qquadet
T_{\text{sleep}}\ =\ \inf\big\{t\in\N\ :\ R_t\cap C=\varnothing\big\}\,.$$
Let us now show that the process
$$M_t\ =\ \mathbf{1}_{\{t<T_\calB\}}
\,e^{-\alpha N_t}
\qquadou
N_t
\ =\ \big|R_t\cap C\setminus\{\dist_C\}\big|$$
is a supermartingale with respect to the filtration~$(\calF_t)_{t\in\N}$ generated by~$(R_t)_{t\in\N}$.

First, note that if~$R_t\in\calB$ or~$R_t\cap C=\varnothing$, then we have~$M_{t+1}= M_t$.
Besides, recall that, as soon as the distinguished vertex~$\dist_C$ is active, it is toppled in priority.
Yet, the distinguished vertex is not counted in~$N_t$, and it cannot wake up anyone in~$C$, because~$w(\dist_C,\,j)\cap C=\varnothing$ for all~$j\in\N$, following our definition~\reff{defW} of~$w$ (which indicates which sites can be awaken by the loops of colour~$j$).
Hence, if~$\dist_C\in R_t$ then we necessarily have~$N_{t+1}= N_t$.
What's more, since the distinguished vertex cannot wake up anyone in~$C$, if~$\dist_C\in R_t$ and~$R_t\notin\calB$ then we still have~$R_{t+1}\notin\calB$.
Thus, we proved the implication
\begin{equation}
\label{MtStable}
\big\{R_t\in\calB\big\}
\cup\big\{R_t\cap C=\varnothing\big\}
\cup\big\{\dist_C\in R_t\big\}
\ \subset\ 
\big\{M_{t+1}=M_t\big\}\,.
\end{equation}
We now assume that~$R_t\notin\calB$,~$R_t\cap C\neq\varnothing$ and~$\dist_C\notin R_t$, and we write~$x=f(R_t)=f_0(R_t)$, which is the next site to be toppled.
Recall that~\smash{$I\big(x,\,h_t(x)\big)$} is the Bernoulli variable which decides if~$x$ falls asleep or performs a loop, in which case this loop covers the set~\smash{$\Gamma\big(x,\,\ell_t(x),\,j\big)$} with~$j=J\big(x,\,\ell_t(x)\big)$.
Recalling that, since~$x\neq\dist_C$, we have~$w(x,\,j)=C_0(x)=C$, we can write
\begin{align*}
N_{t+1}-N_t
&\ =\ 
-I(x,\,h_t(x))
+\big(1-I(x,\,h_t(x))\big)
\times
\big|\Gamma\big(x,\,\ell_t(x),\,j\big)\cap\big(C\setminus (R_t\cup\{\dist_C\})\big)\big|\\
&\ \geqslant\ 
-I(x,\,h_t(x))
+\big(1-I(x,\,h_t(x))\big)
\sum_{y\,\in\, B(x,\,16r)\,\cap\, C\setminus (R_t\cup\{\dist_C\})}
\mathbf{1}_{\{y\in\Gamma(x,\,\ell_t(x),\,j)\}}\,.
\end{align*}
Recall now that the bound~\reff{expCCaround} ensures that the above sum contains at least~$\Ceil{(1-\beta)v-1}$ terms.
Besides, each of these terms dominates a Bernoulli variable with parameter~$\Upsilon_d(16r)$, where~$\Upsilon_d$ is the function which was defined in~\reff{defUpsilon}.
Even though these variables may not be independent, we have the following
result whose elementary proof is deferred to the appendix~\ref{sectionProofLemmaSumBernoulli}.

\begin{lemma}
\label{lemmaSumBernoulli}
Let~$n\in\N$ and let~$X_1,\,\ldots,\,X_n$ be Bernoulli random variables with parameter~$p\in[0,1]$ (non necessarily independent).
Then, for every~$c>0$, we have
$$\Esp\Big[e^{-c(X_1+\cdots+X_n)}\Big]
\ \leqslant\ 1-p+pe^{-cn}\,.$$
\end{lemma}

Using this result, we deduce that, on the event~\smash{$\big\{R_t\notin\calB\big\}\cap\big\{R_t\cap C\neq\varnothing\big\}\cap\big\{\dist_C\notin R_t\big\}$}, we have
\begin{align*}
\Esp\big(M_{t+1}\,\big|\,\calF_t\big)
&\ \leqslant\ 
M_t\times
\Esp\big(e^{-\alpha(N_{t+1}-N_t)}\,\big|\,\calF_t\big)\\
&\ \leqslant\ 
M_t\,
\left(
\frac{\lambda}{1+\lambda}e^{\alpha}
+\frac{1-\Upsilon_d(16r)
\big(1-e^{\alpha(1-(1-\beta)v)}\big)}
{1+\lambda}
\right)\\
&\ =\ 
M_t\,
\left(
1+
\frac{\lambda\big(e^{\alpha}-1\big)
-\Upsilon_d(16r)
\big(1-e^{\alpha(1-(1-\beta)v)}\big)}
{1+\lambda}
\right)
\ \leqslant\ M_t\,,
\end{align*}
using our assumption~\reff{conditionMetastability}.
Recalling~\reff{MtStable}, we deduce that~$(M_t)$ is a supermartingale with respect to~$(\calF_t)$.
Hence, using Doob's Theorem and using that~$R_0\in\partial\calB$, we obtain that, for every~$m>0$,
$$\Esp\big[M_{T_\calB\wedge T_\text{sleep}\wedge m}\big]
\ \leqslant\ M_0
\ =\ \exp\big(-\alpha|R_0\cap C|\big)
\ =\ e^{-\alpha\Ent{\beta|C|}}\,,$$
which implies by Fatou's Lemma that
$$\Esp\big[M_{T_\calB\wedge T_\text{sleep}}\big]
\ =\ \Esp\bigg[\liminf\limits_{m\to\infty}M_{T_\calB\wedge T_\text{sleep}\wedge m}\bigg]
\ \leqslant\ \liminf\limits_{m\to\infty}\,
\Esp\big[M_{T_\calB\wedge T_\text{sleep}\wedge m}\big]
\ \leqslant\ e^{-\alpha\Ent{\beta|C|}}\,,$$
whence
$$\Proba\big(T_\text{sleep}<T_\calB\big)
\ =\ \Esp\big[M_{T_\calB\wedge T_\text{sleep}}\big]
\ \leqslant\ e^{-\alpha\Ent{\beta|C|}}\,.$$
This being true for all starting configurations~$R_0\in\partial\calB$, the result follows.
\end{proof}

\subsection{Concluding proof of Lemma~\ref{lemmaInit2Dhigh}}

Let~$\lambda>1$, and let~$r$ and~$\alpha_0$ be defined by~\reff{paramsInit}.
Let~$D\in(\N\setminus\{0\})^\N$, let~$n\geqslant 1$, let~$A\subset\Z_n^2$ and assume that~$(A_j,\,\calC_j)_{j\leqslant {\calJ}}$ is a~$(r^2,\,D)$-dormitory hierarchy on~$A$ such that every set~$C\in\calC_0$ is~$8r$-connected and satisfies~\reff{conditionDense}.

We wish to apply Lemma~\ref{lemmaMetastability} with~$\alpha=3\alpha_0/2$,~$\beta=5/6$ and~$v=r^2$.
To do so, we need to check that the condition~\reff{conditionMetastability} is satisfied, at least for~$\lambda$ large enough.
Replacing~$\alpha_0$ and~$r$ by their expressions given by~\reff{paramsInit}, we have
$$e^{\alpha(1-(1-\beta)v)}
\ =\ \exp\Bigg[\frac{3\alpha_0}{2}\left(1-\frac{r^2}{6}\right)\Bigg]
\ \leqslant\ \exp\Bigg[\frac{3K}{2\lambda\ln\lambda}\left(1-\frac{64\lambda(\ln\lambda)^2}{6K}\right)\Bigg]
\ \stackrel{\lambda\to\infty}{\sim}\ 
\frac{1}{\lambda^{16}}\,,$$
whence~$e^{\alpha(1-(1-\beta)v)}\leqslant 1/8$ for~$\lambda$ large enough.
Combining this with the lower bound on~$\Upsilon_2$ 
given by Lemma~\ref{lemmaHarnack}, we deduce that, for~$\lambda$ large enough,
\begin{equation}
\label{checkingConditionMetastability}
\lambda\big(e^{\alpha}-1\big)
-\Upsilon_2(16r)
\big(1-e^{\alpha(1-(1-\beta)v)}\big)
\ \leqslant\ 
\lambda\left[\exp\left(\frac{3K}{2\lambda\ln\lambda}\right)-1\right]
-\frac{(7/8)K}{\ln r+\ln 16}\,.
\end{equation}
Using now that~$\ln r\sim(\ln\lambda)/2$ when~$\lambda\to\infty$, we may write
$$\lambda\left[\exp\left(\frac{3K}{2\lambda\ln\lambda}\right)-1\right]
-\frac{(7/8)K}{\ln r+\ln 16}
\ \stackrel{\lambda\to\infty}{\sim}\ 
\frac{3K}{2\ln\lambda}-\frac{7K}{4\ln\lambda}
\ =\ -\frac{K}{4\ln\lambda}
\ <\ 0\,.$$
Plugging this into~\reff{checkingConditionMetastability}, we deduce that the condition~\reff{conditionMetastability} necessary to apply Lemma~\ref{lemmaMetastability} is satisfied for~$\lambda$ large enough.
Thus, using this Lemma we deduce that for every~$C\in\calC_0$ there exists a~$C$-toppling procedure~$f$ such that~$\calN_{\calB}$, the number of visits of the set~$\calB$ defined in the statement of Lemma~\ref{lemmaMetastability}, dominates a geometric variable of parameter
$$\exp\left(-\frac{3\alpha_0}{2}\Ent{\frac{5|C|}{6}}\right)\,.$$
Recalling that~$|C|\geqslant r^2$ with~$r\to\infty$ when~$\lambda\to\infty$, we have~$\Ent{5|C|/6}\geqslant 3|C|/4$ provided that that~$\lambda$ is chosen large enough so that~$r^2\geqslant 12$.
Thus, for~$\lambda$ large enough, for every~$C\in\calC_0$, the variable~$\calN_\calB$ dominates a geometric random variable with parameter~\smash{$\exp\big(-9\alpha_0|C|/8\big)$}.

Now it remains to deduce a lower bound on the number of sleeps performed by the distinguished vertex before stabilization.
As in the proof of Lemma~\ref{lemmaMetastability}, we consider the process starting from a fixed initial configuration~$R_0\in\partial\calB$, with~$\partial\calB$ given by~\reff{defPartialB}, so that the number of visits of~$\calB$ from this initial configuration is~$\calN_\calB-1$.
Let us introduce two new random times: first,
$$T_{\star}
\ =\ \inf\big\{t\in\N\ :\ \dist_C\in R_t\big\}\,,$$
which is the first time when the distinguished vertex is awaken (recall that~$\dist_C\notin R_0$ because we start from~\smash{$R_0\in\partial\calB$}), and, for~$t\in\N$,
$$T_{\calB}^t
\ =\ \inf\big\{t'\geqslant t\ :\ R_{t'}\in\calB\big\}\,.$$
We also define a deterministic time~$t_0=\Ent{2|C|\ln|C|/\ln\lambda}$, and we now look for a lower bound on the probability of the event that, starting from~$R_0$, we wake up the distinguished particle before~$t_0$ and, after doing this, we visit~$\calB$ before stabilization, that is to say,
\begin{equation}
\label{defE}
\calE
\ =\ \Big\{T_{\star}\leqslant t_0\Big\}
\cap\Big\{T_{\calB}^{T_{\star}}<T_{\text{sleep}}\Big\}\,.
\end{equation}
To this end, we write
\begin{align*}
\Proba\big(\calE^c\big)
&\ =\ 
\Proba\Big(\big\{T_{\star}> t_0\big\}
\cup\big\{T_{\calB}^{T_{\star}}\geqslant T_{\text{sleep}}\big\}\Big)
\ =\ 
\Proba\Big(\big\{T_{\star}> t_0\big\}
\cup\big\{T_{\calB}^{T_{\star}}=\infty\big\}\Big)\\
&\ \leqslant\ 
\Proba\Big(\big\{T_{\star}> t_0\big\}
\cup\big\{T_{\calB}^{t_0}=\infty\big\}\Big)\\
&\ \leqslant\ 
\Proba\Big(T_{\text{sleep}}\leqslant t_0\Big)
\,+\,\Proba\Big(\big\{T_{\star}> t_0\big\}\cap\big\{t_0<T_{\text{sleep}}\big\}\Big)\\
&\qquad+\Proba\Big(\big\{T_{\calB}^{t_0}=\infty\big\}
\cap\big\{t_0<T_{\text{sleep}}\big\}\Big)\,.
\numberthis\label{threeProbas2D}
\end{align*}
We now control each of these terms.
To deal with the first term, we note that~$T_{\text{sleep}}\geqslant \calN_{\calB}$, which we proved to dominate a geometric variable with parameter~\smash{$\exp\big(-9\alpha_0|C|/8\big)$}, whence
$$\Proba\Big(T_{\text{sleep}}\leqslant t_0\Big)
\ \leqslant\ 
\Proba\Big(\calN_{\calB}\,\leqslant\,t_0\Big)
\ \leqslant\ t_0\,\exp\left(-\frac{9\alpha_0|C|}{8}\right)\,.$$
To deal with the third term, we note that, if~$\calN_\calB> t_0$, that is to say, we have at least~$t_0$ returns to~$\calB$, then at least one of these returns must occur after time~$t_0$, whence
$$\Proba\Big(\big\{T_{\calB}^{t_0}=\infty\big\}
\cap\big\{t_0<T_{\text{sleep}}\big\}\Big)
\ \leqslant\ \Proba\Big(\calN_{\calB}\leqslant t_0\Big)
\ \leqslant\ t_0\,\exp\left(-\frac{9\alpha_0|C|}{8}\right)\,.$$
To deal with the second term, we note that,~$C$ being~$8r$-connected, we have~$\diam\,C\leqslant 8r|C|$.
Thus, as long as the distinguished vertex sleeps, at each step, the probability to wake it up is at least~\smash{$\Upsilon_2\big(8r|C|\big)/(1+\lambda)$}.
Therefore, we have
$$\Proba\Big(\big\{T_{\star}> t_0\big\}\cap\big\{t_0<T_{\text{sleep}}\big\}\Big)
\ \leqslant\ \left(1-\frac{\Upsilon_2\big(8r|C|\big)}{1+\lambda}\right)^{t_0}\,.$$
Plugging these three bounds into~\reff{threeProbas2D} and using the lower bound on~$\Upsilon_2$ given by Lemma~\ref{lemmaHarnack}, we get
\begin{align*}
\Proba\big(\calE^c\big)
&\ \leqslant\ 2t_0\,e^{-9\alpha_0|C|/8}
\,+\,\left(1-\frac{\Upsilon_2\big(8r|C|\big)}{1+\lambda}\right)^{t_0}\\
&\ \leqslant\ \frac{4|C|\ln|C|}{\ln\lambda}\,
e^{-9\alpha_0|C|/8}
\,+\,\exp\left(-\frac{2|C|\ln|C|}{\ln\lambda}
\times\frac{1}{1+\lambda}
\times\frac{K}{\ln 8+\ln r+\ln|C|}\right)\\
&\ \leqslant\ \frac{4|C|^2}{\ln\lambda}\,
e^{-9\alpha_0|C|/8}
\,+\,\exp\left(-\frac{4K|C|\ln r}{(1+\lambda)\ln\lambda(3\ln 2+3\ln r)}\right)\\
&\ =\ \frac{e^{-\alpha_0|C|}}{2\lambda}
\times\Bigg[\frac{8\lambda|C|^2}{\ln\lambda}\,
e^{-\alpha_0|C|/8}
\,+\,2\lambda\exp\left(\alpha_0|C|-\frac{4K|C|\ln r}{3(1+\lambda)\ln\lambda(\ln 2+\ln r)}\right)
\Bigg]\,.
\end{align*}
We now wish to show that, for~$\lambda$ large enough, for every~$|C|\geqslant r^2$, the quantity between the brackets is smaller than~$1$.
Defining
$$M
\ =\ \sup_{x>0}\big(x^2e^{-x/16}\big)\,,$$
we may write
\begin{equation}
\label{probaEcomp}
\frac{2\lambda\,\Proba\big(\calE^c\big)}{e^{-\alpha_0|C|}}
\ \leqslant\ 
\frac{8M\lambda^3\ln\lambda}{K^2}\,
e^{-\alpha_0 r^2/16}
\,+\,2\lambda\exp\left[\bigg(\frac{K}{\lambda\ln\lambda}-\frac{4K\ln r}{3(1+\lambda)\ln\lambda\big(\ln 2+\ln r\big)}\bigg)|C|\,\right]\,.
\end{equation}
On the one hand, we have
\begin{equation}
\label{probaEcomp1}
\frac{8M\lambda^3\ln\lambda}{K^2}\,
e^{-\alpha_0 r^2/16}
\ \geqslant\ \frac{8M\lambda^3\ln\lambda}{K^2}
e^{-4\ln\lambda}
\ =\ \frac{8M\ln\lambda}{K^2\lambda}
\ \stackrel{\lambda\to\infty}{\longrightarrow}\ 0\,.
\end{equation}
On the other hand, we have
$$\frac{K}{\lambda\ln\lambda}
-\frac{4K\ln r}{3(1+\lambda)\ln\lambda\big(\ln 2+\ln r\big)}
\ \stackrel{\lambda\to\infty}{\sim}\ 
\frac{K}{\lambda\ln\lambda}-
\frac{4K}{3\lambda\ln\lambda}
\ =\ -\frac{K}{3\lambda\ln\lambda}\,,$$
whence, for~$\lambda$ large enough,
$$\frac{K}{\lambda\ln\lambda}
-\frac{4K\ln r}{3(1+\lambda)\ln\lambda\big(\ln 2+\ln r\big)}
\ \leqslant\ -\frac{K}{6\lambda\ln\lambda}\,,$$
implying that, for~$|C|\geqslant r^2$,
\begin{align*}
2\lambda
\exp\left[\bigg(\frac{K}{\lambda\ln\lambda}
-\frac{4K\ln r}{3(1+\lambda)\ln\lambda\big(\ln 2+\ln r\big)}\bigg)|C|\,\right]
&\ \leqslant\ 2\lambda
\exp\left(-\frac{K}{6\lambda\ln\lambda}\times\frac{64\lambda(\ln\lambda)^2}{K}\right)\\
&\ =\ \frac{2}{\lambda^{29/3}}
\ \stackrel{\lambda\to\infty}{\longrightarrow}\ 0\,.
\numberthis\label{probaEcomp2}
\end{align*}
Plugging~\reff{probaEcomp1} and~\reff{probaEcomp2} into~\reff{probaEcomp}, we deduce that, for~$\lambda$ large enough, for every~$|C|\geqslant r^2$, we have
$$\Proba\big(\calE^c\big)
\ \leqslant\ \frac{1}{2\lambda}\,\exp\big(-\alpha_0|C|\big)\,.$$
Recalling the definition~\reff{defE} of the event~$\calE$, note that, if~$\calE$ is realized, it means that, starting from~$R_0\in\partial\calB$, the distinguished particle is waken up, after which we visit~$\calB$ and, sooner or later, we reach~$\partial\calB$ again, which implies that the distinguished particle fell asleep.
Since our estimate on~$\calE$ is valid uniformly over all the initial configurations~$R_0\in\partial\calB$, we have a renewal sequence, and we deduce that~$\calS(C)$, which denotes the number of~$\dist_C$-sleeps during the stabilization of~$C$, dominates a geometric random variable with parameter~\smash{$\exp\big(-\alpha_0|C|\big)/(2\lambda)$}.

To conclude our proof, there only remains to translate this result into a lower bound on the number of loops of colour~$0$.
Following Lemma~\ref{lemmaNextColour}, we have the equality in distribution
$$\calL(C,\,0)
\ \stackrel{\text{d}}{=}\ \sum_{i=1}^{\calS(C)}(X_i-1)\,,$$
where the variables~$(X_i)$ are i.i.d.\ geometric variables with parameter~$b=2\lambda/(2\lambda+1)$ and are independent of~$\calS(C)$.
Using now Lemma~\ref{lemmaSumGeometrics}, we deduce that~$1+\calL(C,\,0)$ dominates a geometric variable with parameter
$$\frac{b\exp\big(-\alpha_0|C|\big)/(2\lambda)}
{1-b+b\exp\big(-\alpha_0|C|\big)/(2\lambda)}
\ =\ \frac{\exp\big(-\alpha_0|C|\big)}
{1+\exp\big(-\alpha_0|C|\big)}
\ \leqslant\ \exp\big(-\alpha_0|C|\big)\,,$$
concluding the proof of~$\calP(0)$.
\hfill\qed

\section{Inductive step: proof of Lemma~\ref{lemmaInduction}}
\label{sectionInduction}

Let~$d,\,\lambda,\,v,\,(D_j),\,(\alpha_j),\,n,\,A,\,(A_j,\,\calC_j)$ be as in the statement.
We assume that~$0\leqslant j< {\calJ}$ is such that the property~$\calP(j)$ holds, and we wish to establish the property~$\calP(j+1)$, that is to say, we need to pass from a statement about the loops of colour~$j$ produced by the clusters of~$\calC_j$ to a statement about the loops of colour~$j+1$ produced by the clusters of~$\calC_{j+1}$, with the constant~$\alpha_{j}$ being replaced with~$\alpha_{j+1}$.

We start by translating our aim into a statement about the number of loops of colour~\smash{$j$}.
Namely, we show that, to establish~$\calP(j+1)$, it is enough to prove the stochastic domination:
\begin{equation}
\label{goalInduction}
\forall\,C\in\calC_{j+1}\qquad
1+\calL(C,\,j)
\ \succeq\ \mathrm{Geom}(a)
\quadou
a\ =\ \frac{1}{2^{j+2}(1+\lambda)}\,
\exp\big(-\alpha_{j+1}|C|\big)\,.
\end{equation}
Indeed, it follows from Lemma~\ref{lemmaNextColour} that
$$\forall\,C\in\calC_{j+1}\qquad
1+\calL(C,\,j+1)
\ \stackrel{d}{=}\ 
\sum_{i=1}^{\calS(C)+\calL(C,\,0)+\cdots+\calL(C,\,j)}(X_i-1)
\ \geqslant\ 
\sum_{i=1}^{1+\calL(C,\,j)}
(X_i-1)\,,$$
where the variables~$(X_i)_{i\in\N}$ are i.i.d.\ geometric variables with parameter
$$b\ =\ \frac{2^{j+2}(1+\lambda)-2}{2^{j+2}(1+\lambda)-1}\,.$$
Thus, using Lemma~\ref{lemmaSumGeometrics} about the sum of a geometric number of geometrics minus one, the statement~\reff{goalInduction} would imply that, for every~$C\in\calC_{j+1}$, the variable~$1+\calL(C,\,j+1)$ dominates a geometric variable with parameter
$$\frac{ab}{1-b+ab}
\ \leqslant\ \frac{ab}{1-b}
\ =\ \big(2^{j+2}(1+\lambda)-2\big)\,a
\ \leqslant\ 2^{j+2}(1+\lambda)\,a
\ =\ \exp\big(-\alpha_{j+1}|C|\big)\,,$$
which is precisely~$\calP(j+1)$.
Thus, there only remains to prove~\reff{goalInduction}.

We now turn to the study of the number of loops of colour~$j$ produced by a cluster~$C\in\calC_{j+1}$.
We start by distinguishing between two cases.
First, if~$C\in\calC_{j}$ then, since~$|C|\geqslant 2^{\Ent{(j+1)/2}}v\geqslant 2^{j/2}v$, the claim~\reff{goalInduction} follows from our assumption~\reff{conditionInduction} on the sequence~$(\alpha_j)_{j\in\N}$, which entails that
$$\exp\big(-\alpha_{j}|C|\big)
\ \leqslant\ \exp\Big[-(\alpha_{j}-\alpha_{j+1})2^{j/2}v-\alpha_{j+1}|C|\Big]
\ \leqslant\ \frac{\exp\big(-\alpha_{j+1}|C|\big)}{2^{j+2}(1+\lambda)}
\ =\ a\,.$$

Hence, we now assume that~$C\in\calC_{j+1}\setminus\calC_{j}$.
In this case, following property~\textit{(\ref{conditionMerge})} of the hierarchy, we have~$\diam\,C\leqslant D_{j}$ and we can write~$C=C_0\cup C_1$ with~$C_0,\,C_1\in\calC_{j}$.
To simplify the notation, we simply write~$\dist_0=\dist_{C_0}$ and~$\dist_1=\dist_{C_1}$.
Upon exchanging the names~$C_0$ and~$C_1$, we may assume that~$|C_0|\geqslant|C_1|$ and~$\dist_C=\dist_0$ (recall that, as explained in section~\ref{subsectionDormitory}, the distinguished vertex of the union of two clusters is the distinguished vertex of the largest of the two clusters).
For~$k\in\{0,1\}$, let us define~\smash{$q_k=\exp\big(-\alpha_{j} |C_k|\big)$}, so that our induction hypothesis~$\calP(j)$ tells us that~$1+\calL(C_0,\,j)$ and~$1+\calL(C_1,\,j)$ respectively dominate geometric random variables of parameter~$q_0$ and~$q_1$.

\subsection{Some notation}
Recall the ping-pong mechanism described in section~\ref{subsectionStrategy}.
To shorten notation, we write
$$\varepsilon\,:\,i\in \N\ \longmapsto\ 
i \text{ mod. } 2
\ =\ 
\begin{cases}
0 & \text{ if } i \text{ is even,}\\
1 & \text{ otherwise.}
\end{cases}$$
Thus, the stabilization of~$C=C_0\cup C_1$ is composed of a series of stabilizations where, during the~$i$-th stabilization (the numbering starting at~$0$), we stabilize the set~$C_{\varepsilon(i)}$.
Let us write~\smash{$R_0=C$} (since the configuration out of~$C$ has no impact on the stabilization of~$C$, we could as well take~$R_0=A$) and~$h_0=\ell_0=0$, and let us define recursively, for every~$i\in\N$,
\begin{equation}
\label{defProcessInduction}
(R_{i+1},\,h_{i+1},\,\ell_{i+1})
\ =\ \mathrm{Stab}_{C_{\varepsilon(i)}}(R_i,\,h_i,\,\ell_i)\,,
\end{equation}
where~$\mathrm{Stab}$ is the stabilization operator defined by~\reff{defStab}.
Note that if~$i\in\N$ is such that~\smash{$R_{i}\cap C=\varnothing$} (that is to say, both sets are stabilized), then the above definition implies that~\smash{$(R_j,\,h_j,\,\ell_j)=(R_i,\,h_i,\,\ell_i)$} for all~$j>i$.
For every~$i\in\N$, we denote by~$\calL_i$ the the number of loops of colour~$j$ produced by the distinguished vertex~$\dist_{\varepsilon(i)}$ during the~$i$-th stabilization.
Formally, for every~$i\in\N$, we have
$$\calL_i
\ =\ \abs{\Big\{\,
\ell\in\big\{\ell_{i}\big(\dist_{\varepsilon(i)}\big),\,\ldots,\,\ell_{i+1}\big(\dist_{\varepsilon(i)}\big)-1\big\}
\ :\ 
J\big(\dist_{\varepsilon(i)},\,\ell\big)
\,=\,j
\,\Big\}}\,.$$
Note that the variables~$(\calL_i)_{i\in\N}$ are not independent because, for example, the second stabilization of~$C_0$ (which produces~$\calL_2$ loops at~$\dist_0$) depends on the configuration which results from the previous stabilizations, in particular it depends on the sites of~$C_0$ which have been reactivated during the stabilization of~$C_1$.

\subsection{Number of good stabilizations}
We now define the random number
\begin{equation}
\label{defN}
\calN
\ =\ \inf\Big\{\,i\geqslant 1\ :\ 
C_0\centernot\subset R_{2i}
\quador
C_1\centernot\subset R_{2i+1}
\,\Big\}\,.
\end{equation}

When~$1\leqslant i<2\calN-1$, after the~$i$-th stabilization (during which the set~$C_{\varepsilon(i)}$ is stabilized), the other set~$C_{\varepsilon(i+1)}$ is fully active.
This means that the loops of colour~$j$ produced by the distinguished vertex~\smash{$\dist_{\varepsilon(i)}$} during the~$i$-th stabilization have entirely covered the other set (recall that only these loops are allowed to wake up the sites in the other set, as explained in section~\ref{subsectionColouredLoops}).

Thus, each of the first~$\calN$ stabilizations of~$C_0$ and~$C_1$ starts with the corresponding set fully active.
Note that we do not require anything on the first stabilization of~$C_0$ because, in any case, since~$C_1$ is already fully active at the beginning of the procedure, it is still active after the first stabilization of~$C_0$, so that this first stabilization of~$C_0$ does not need to wake up~$C_1$ for the mechanism of reciprocal activation to be able to go on.

\subsection{The induction relation}
The key inequality which yields the induction property is
\begin{equation}
\label{key-induction}
\calL(C,\,j)
\ =\ \sum_{i=0}^{+\infty}\calL_{2i}
\ \geqslant\ \sum_{i=0}^{\calN-1}
\calL_{2i}\,.
\end{equation}
Note that we only count the loops produced during the even steps because we are only interested in the number of loops emerging from~$\dist_0$.

\subsection{Our aim: a geometric sum of i.i.d.\ geometric variables}
We will show later that~$\calN$ is a geometric variable, and we want to show that the sum in~\reff{key-induction} dominates a geometric variable.
If the variables~$1+\calL_{2i}$ were i.i.d.\ geometric variables independent of~$\calN$, we could conclude using Lemma~\ref{lemmaSumGeometrics} that~$1+\calL(C,\,j)$ dominates a geometric variable, whose parameter has an explicit expression in the two parameters.

A first issue is that the variables~$1+\calL_{2i}$ are not i.i.d.\ geometric variables, and this family not even dominates a family of i.i.d.\ geometrics.
Even though it follows from the induction hypothesis that~$1+\calL_0$ dominates a geometric with parameter~$q_0$, this is not the case of the subsequent variables, because for~$i\geqslant 2\calN$ some stabilizations may start from a set which is not fully active.
In fact with probability~$1$ we even have~$\calL_i=0$ for~$i$ large enough.
To solve this problem, we show below that we can replace this sequence~$\calL$ with another sequence~$\calL'$ which does dominate a sequence of independent geometrics.

\subsection{An infinite \guillemets{Sisyphus} sequence}
It is convenient to consider another infinite sequence~\smash{$\calL'=(\calL'_i)_{i\in\N}$} which is defined similarly, with the only difference that, after each stabilization, we proceed with the next stabilization assuming that all sites of the other set have been awaken.
Formally, we start with the same initial parameters~$R'_0=R_0=C$,~$h'_0=h_0=0$ and~$\ell'_0=\ell_0=0$ and we define recursively, for~$i\in\N$,
$$\big(R'_{i+1},\,h'_{i+1},\,\ell'_{i+1}\big)
\ =\ \mathrm{Stab}_{C_{\varepsilon(i)}}\big(C_{\varepsilon(i)},\,h'_i,\,\ell'_i\big)\,.$$
Note that the only difference with respect to the previous definition~\reff{defProcessInduction} is that as first argument of the stabilization operator~$\mathrm{Stab}$ we have~$C_{\varepsilon(i)}$ instead of~$R_i$, which means that we start the~$i$-th stabilization with the set~$C_{\varepsilon(i)}$ fully active, regardless of what happened during the previous stabilizations.
We then also define~$\calL'_i$ to be the number of loops of colour~$j$ produced by the distinguished vertex of~$C_{\varepsilon(i)}$ during the~$i$-th stabilization, that is to say,
$$\calL'_i
\ =\ \abs{\Big\{\,
\ell\in\big\{\ell'_{i}\big(\dist_{\varepsilon(i)}\big),\,\ldots,\,\ell'_{i+1}\big(\dist_{\varepsilon(i)}\big)-1\big\}
\ :\ 
J\big(\dist_{\varepsilon(i)},\,\ell\big)
\,=\,j
\,\Big\}}\,,$$
and we define~$\calN'$ exactly as~$\calN$ was defined in~\reff{defN} but replacing~$R$ with~$R'$.
One can then notice that, by definition of~$\calN$, for every~$i<2\calN$, we have~$\calL'_i=\calL_i$ and~$R'_{i+1}=R_{i+1}$.

We claim that we also have~$\calN'=\calN$.
On the one hand, if~$C_0\centernot\subset R_{2\calN}= R'_{2\calN}$ then by definition we have~$\calN'=\calN$.
Otherwise, if~$C_0\subset  R_{2\calN}= R'_{2\calN}$, then we also have~$\calL'_{2\calN}=\calL_{2\calN}$ whence~\smash{$R'_{2\calN+1}= R_{2\calN+1}\centernot\supset C_1$}, which also implies that~$\calN'=\calN$.
Therefore, in both cases we have~$\calN'=\calN$.

We may now rewrite the induction relation~\reff{key-induction} as
\begin{equation}
\label{key-induction-bis}
\calL(C,\,j)
\ \geqslant\ \sum_{i=0}^{\calN-1}
\calL_{2i}
\ =\ \sum_{i=0}^{\calN'-1}
\calL'_{2i}\,.
\end{equation}

\subsection{The dependency we want to get rid of}
With~\reff{key-induction-bis}, the problem is now much simplified, because the variables~$(\calL'_{i})$ are independent, and for every~$i\in\N$, we have that~$1+\calL'_{2i}$ is distributed as~$1+\calL(C_0,\,j)$ which, by the induction hypothesis, dominates a geometric random variable with parameter~$q_0$.

However, this sequence~$\calL'$ is not independent of~$\calN'$.
Yet, this dependency can be controlled, and we show below that~$\calL'$ and~$\calN'$ are positively correlated.

\subsection{The key independence property thanks to the coloured loops}
An important property of this new sequence~$\calL'$ is that it is independent of the loops of colour~$j$ produced by the two distinguished vertices:
\begin{equation}
\label{indep}
(\calL'_i)_{i\in\N}
\quad\perp\quad
\big(\Gamma(\dist_k,\,\ell,\,j)\big)_{k\in\{0,1\},\,\ell\in\N}
\end{equation}
because, among the loops emitted by~$\dist_0$ and~$\dist_1$, only those of colour~$0,\,\ldots,\,j-1$ can affect the internal stabilizations of~$C_0$ and~$C_1$, while the loops of colour~$j$ emitted by~$\dist_0$ are only used to wake up sites of~$C_1$ during the stabilizations of~$C_0$, and vice versa.
Thus, since this new sequence~$\calL'$ is constructed precisely by ignoring which sites of the other cluster are reactivated during each stabilization, we have the above independence property.

This means that, conditioned on~$\calL'$, we know the \textit{number} of loops of colour~$j$ that can reactivate the other set, but we have no information about the \textit{shapes} of these loops of colour~$j$, so that these loops still behave as i.i.d.\ excursions.
This property would not have hold if we had not coloured the loops.

\subsection{Why distinguished vertices ?}

Along with the above independence property~\reff{indep} which follows from our colouring of the loops, it is also important that the loops devoted to reactivating~$C_1$ (resp.~$C_0$) all have the same starting point~$\dist_0$ (resp.~$\dist_1$).
Indeed, it is crucial to have in each set a distinguished vertex which is the only one able to wake up the other set.

Otherwise, if we had coloured all the loops but with all the vertices allowed to wake up the other set (that is to say, if the loops emanating from any point~$x\in C_0$ could reactivate the sites in~$C_1$ if and only if they are of colour~$j$), then it would be harder to control the correlation between~$\calL'_{2i}$ and the probability that~$C_1$ is fully reactivated by the loops emerging from~$C_0$ during the~$i$-th stabilization of~$C_0$, because we would not know how the starting points of these loops are distributed.

Hence our choice to select one distinguished vertex in each set and to devote the job of reactivating the other set only to the loops emerging from this vertex.
Doing so, conditioned on~$\calL'$, we know how many loops we have at our disposal to wake up the other set, and we know exactly where these loops start, while we have no information about the shapes of these loops. Using these ingredients, we now show that there is a positive correlation between~$\calL'$ and~$\calN'$.

\subsection{Positive correlation}
We now decompose~\reff{key-induction-bis} over the different possible values of~$\calN'$, writing
\begin{multline*}
\PlA\big(
\calL(C,\,j)
\,\geqslant\,m\big)
\ \geqslant\ 
\sum_{k=1}^{+\infty}\,
\PlA\Bigg(\Bigg\{\sum_{i=0}^{k-1}\calL'_{2i}\,\geqslant\,m\Bigg\}\cap\big\{\calN'=k\big\}\Bigg)\\
\ =\ 
\sum_{k=1}^{+\infty}\,
\PlA\Bigg(
\Bigg\{\sum_{i=0}^{k-1}\calL'_{2i}\,\geqslant\,m\Bigg\}
\cap\big\{\calN'\geqslant k\big\}
\cap\big\{C_0\centernot\subset  R'_{2k}\ \text{or}\  C_1\centernot\subset  R'_{2k+1}\big\}
\Bigg)\,.
\end{multline*}
In the above three events, the two first only depend on~$\calL'_0,\,\ldots,\,\calL'_{2k-2}$ and~$R'_2,\,\ldots,\, R'_{2k-1}$, and are therefore independent of the third event, leading to
\begin{equation}
\label{positiveCorrelation}
\PlA\big(
\calL(C,\,j)
\,\geqslant\,m\big)
\ \geqslant\ 
\sum_{k=1}^{+\infty}\,
\PlA\Bigg(
\Bigg\{\sum_{i=0}^{k-1}\calL'_{2i}\,\geqslant\,m\Bigg\}
\cap\big\{\calN'\geqslant k\big\}
\Bigg)
\,\PlA\big(C_0\centernot\subset  R'_{2k}\ \text{or}\  C_1\centernot\subset  R'_{2k+1}\big)\,.
\end{equation}
To show that the two first events are positively correlated, we write
\begin{align*}
\PlA\big(\,\calN'\geqslant k\ \big|\ \calL'\,\big)
&\ =\ \PlA\Bigg(\,
\bigcap_{1\,\leqslant\,i\,<\,k}
\big\{C_0\subset  R'_{2i}\big\}
\cap
\big\{C_1\subset  R'_{2i+1}\big\}
\ \Bigg|\ \calL'
\,\Bigg)\\
&\ =\ 
\PlA\Bigg(\,
\bigcap_{1\,\leqslant\,i\,\leqslant\,2k-2}
\big\{C_{\varepsilon(i+1)}\subset U_i\big\}
\ \Bigg|\ \calL'
\,\Bigg)\,,\numberthis\label{probaIntersection}
\end{align*}
where the set~$U_i$ is given by
$$U_i
\ =\ \bigcup_{
\substack{
\ell'_i(\dist_{\varepsilon(i)})\,\leqslant\,\ell\,<\,\ell'_{i+1}(\dist_{\varepsilon(i)})\\
J(\dist_{\varepsilon(i)},\,\ell)\,=\,j}}
\Gamma\big(\dist_{\varepsilon(i)},\,\ell,\,j\big)\,.$$
Now note that, following the key independence property~\reff{indep}, the loops involved in the above union are independent of~$\calL'$, while the number of loops involved in the union is by definition~$\calL'_i$.
Thus, conditioned on~$\calL'$, the events in the intersection in~\reff{probaIntersection} are independent and we simply have
\begin{align*}
\PlA\big(\,\calN'\geqslant k\ \big|\ \calL'\,\big)
&\ =\ \prod_{1\,\leqslant\,i\,\leqslant\,2k-2}
\PlA\Bigg(\,
C_{\varepsilon(i+1)}\subset
\bigcup_{0\,\leqslant\,\ell\,<\,\calL'_i}
\Gamma\big(\dist_{\varepsilon(i)},\,\ell,\,j\big)
\ \Bigg|\ \calL'
\,\Bigg)\\
&\ =\ \prod_{1\,\leqslant\,i\,\leqslant\,2k-2}
\psi_{\varepsilon(i)}\big(\calL'_i\big)\,,\numberthis\label{productPsi}
\end{align*}
where the function~$\psi_0$ is defined by
$$\psi_0\,:\,x\in\N\ \longmapsto\ \PlA\Bigg(\,
C_1\subset
\bigcup_{0\,\leqslant\,\ell\,<\,x}
\Gamma\big(\dist_0,\,\ell,\,j\big)
\,\Bigg)$$
and~$\psi_1$ is defined similarly, replacing~$C_1$ with~$C_0$ and~$\dist_0$ with~$\dist_1$.
Clearly, these functions are increasing, implying that the two events~$\{\calN'\geqslant k\}$ and~$\{\sum_{i=0}^{k-1}\calL'_{2i}\geqslant m\}$ are positively correlated.
Thus, coming back to~\reff{positiveCorrelation}, we deduce that
\begin{align*}
\PlA\big(
\calL(C,\,j)
\,\geqslant\,m\big)
&\ \geqslant\ 
\sum_{k=1}^{+\infty}\,
\PlA\Bigg(
\sum_{i=0}^{k-1}\calL'_{2i}\,\geqslant\,m
\Bigg)
\,\PlA\big(\calN'\geqslant k\big)
\,\PlA\big(C_0\centernot\subset  R'_{2k}\ \text{or}\  C_1\centernot\subset  R'_{2k+1}\big)\\
&\ =\ 
\sum_{k=1}^{+\infty}\,
\PlA\Bigg(\sum_{i=0}^{k-1}\calL'_{2i}\,\geqslant\,m\Bigg)
\,\PlA\big(\calN'= k\big)
\ =\ \PlA\Bigg(\sum_{i=0}^{\calN''-1}\calL'_{2i}\,\geqslant\,m\Bigg)\,,
\end{align*}
where~$\calN''$ is a copy of~$\calN'$ which is independent of~$\calL'$ (with a slight abuse of notation, we keep the notation~$\PlA$ for the new probability measure).
Now notice that if follows from our computation~\reff{productPsi} that~$\calN'$ (and hence also~$\calN''$) is a geometric random variable with parameter~$1-p_0p_1$, where
\begin{equation}
\label{defpk}
p_0
\ =\ \ElA\big[\psi_0(\calL'_0)\big]
\qquadet
p_1
\ =\ \ElA\big[\psi_1(\calL'_1)\big]\,.
\end{equation}
At this point, recalling that, for every~$i\in\N$,~$1+\calL'_{i}$ is distributed as~\smash{$1+\calL(C_{\varepsilon(i)},\,j)$}, which by the induction hypothesis dominates a geometric variable of parameter~$q_{\varepsilon(i)}$, and using Lemma~\ref{lemmaSumGeometrics} about the sum of a geometric number of geometric variables (minus one), we deduce that~$1+\calL(C,\,j)$ dominates a geometric variable with parameter
\begin{equation}
\label{defQprime}
q'
\ =\ \frac{(1-p_0p_1)q_0}{1-p_0p_1q_0}
\ \leqslant\ \frac{(1-p_0p_1)q_0}{1-q_0}\,.
\end{equation}

\subsection{Bound on the parameter of the geometric variable}

We now estimate~$p_0$ and~$p_1$ introduced in~\reff{defpk}.
For every~$x\in\N$, we have
\begin{align*}
1-\psi_0(x)
&\ =\ \PlA\Bigg(\,
C_1\centernot\subset
\bigcup_{0\,\leqslant\,\ell\,<\,x}
\Gamma\big(\dist_0,\,\ell,\,j\big)
\,\Bigg)
\ \leqslant\ 
\sum_{y\in C_1} \PlA\Bigg(\,y\notin
\bigcup_{0\,\leqslant\,\ell\,<\,x}
\Gamma\big(\dist_0,\,\ell,\,j\big)
\,\Bigg)\\
&\ =\ \sum_{y\in C_1}
\PlA\Big(\,y\notin\Gamma(\dist_0,\,0,\,j)\,
\Big)^x
\ \leqslant\ |C_1|\big(1-\Upsilon_d(D_{j})\big)^x\,,
\end{align*}
using the decreasing function~$\Upsilon_d$ defined by~\reff{defUpsilon}, 
and the fact that~$\diam\,C\leqslant D_{j}$.
Note that, in the one-dimensional case, the above estimate could be easily improved because, in dimension~$1$, if a loop visits both ends of a cluster, then it wakes up everyone in this cluster.

In order to estimate~$p_0$, we replace~$x$ with~$\calL'_0$ and recall once again that~$1+\calL'_0$ dominates a geometric random variable with parameter~$q_0$, yielding
\begin{align*}
1-p_0
&\ =\ \Esp\big[1-\psi_0(\calL'_0)\big]
\ \leqslant\ 
\sum_{m=0}^{+\infty}
q_0(1-q_0)^{m}\big(1-\psi_0(m)\big)\\
&\ \leqslant\ 
|C_1| q_0
\,\sum_{m=0}^{+\infty}
(1-q_0)^{m}\big(1-\Upsilon_d(D_{j})\big)^m
\ =\ 
\frac{|C_1| q_0}{1-(1-q_0)\big(1-\Upsilon_d(D_{j})\big)}\\
&\ =\ \frac{|C_1| q_0}{\Upsilon_d(D_{j})+q_0\big(1-\Upsilon_d(D_{j})\big)}
\ \leqslant\ \frac{|C_1| q_0}{\Upsilon_d(D_{j})}\,.
\end{align*}
A similar bound holds for~$p_1$, replacing~$|C_1|q_0$ with~$|C_0|q_1$.
Using these bounds, we deduce that the parameter~$q'$ defined in~\reff{defQprime} is bounded by
$$q'\ \leqslant\ \frac{\big[(1-p_0)+p_0(1-p_1)\big]q_0}{1-q_0}
\ \leqslant\ \frac{\big(|C_1| q_0+|C_0|q_1\big)q_0}{(1-q_0)\Upsilon_d(D_{j})}
\ \leqslant\ \frac{|C|\,q_1q_0}{(1-q_0)\Upsilon_d(D_{j})}\,,$$
where in the last inequality we used that~$q_0\leqslant q_1$.
Noting now that
$$q_0q_1\ =\ \exp\big(-\alpha_{j}|C_0|-\alpha_{j}|C_1|\big)
\ =\ \exp\big(-\alpha_{j}|C|\big)\,,$$
this becomes
$$q'
\ \leqslant\ 
\frac{|C|\,\exp\big(-\alpha_{j}|C|\big)}
{(1-q_0)\Upsilon_d(D_{j})}
\ \leqslant\ \frac{|C|\,\exp\big(-(\alpha_{j}-\alpha_{j+1})|C|\big)}
{\big(1-e^{-\alpha_{j} v}\big)\Upsilon_d(D_{j})}
\,\exp\big(-\alpha_{j+1}|C|\big)\,.$$
Writing~$u=\alpha_{j}-\alpha_{j+1}$ and noting that the function~$z:x\mapsto x\,e^{-ux}$ is decreasing on~$[1/u,\,\infty)$ and that the property~\textit{(\ref{conditionCard})} of the hierarchy together with our assumption~\reff{conditionInduction} on~$(\alpha_j)$ ensure that
$$|C|
\ \geqslant\ 2^{\Ent{(j+1)/2}} v
\ \geqslant\ 2^{j/2} v
\ \geqslant\ \frac{1}{u}\,,$$
we deduce that~\smash{$z\big(|C|\big)\leqslant z\big(2^{j/2} v\big)$}, whence
$$q'
\ \leqslant\ 
\frac{2^{j/2}v
\,\exp\big(
-(\alpha_{j}-\alpha_{j+1})2^{j/2}v\big)}
{\big(1-e^{-\alpha_{j} v}\big)\Upsilon_d(D_{j})}
\,\exp\big(-\alpha_{j+1}|C|\big)\,.$$
Using our assumption~\reff{conditionInduction}, 
we deduce that~\smash{$q'\leqslant\exp\big(
-\alpha_{j+1}|C|\big)/\big(2^{j+2}(1+\lambda)\big)$}, which proves our goal estimate~\reff{goalInduction}, concluding the inductive step.
\hfill\qed

\begin{appendix}
\section*{Proofs of the Technical Lemmas}

\subsection{Sufficient conditions for activity: proof of Lemmas~\ref{lemmaConditionActivityTopplings} and~\ref{lemmaConditionActivityLoops}}
\label{sectionProofLemmaCondition}

We start with the proof of the sufficient condition formulated in terms of the number of topplings, which is a mere combination of ingredients of~\cite{FG22}:

\begin{proof}[Proof of Lemma~\ref{lemmaConditionActivityTopplings}]
Let~$d\geqslant 1$,~$\lambda>0$,~$\mu\in(0,1)$ and let~$a>0$,~$b>\psi(\mu)$ and~$n_0\geqslant 1$ be such that~\reff{conditionMA} holds for every~$n\geqslant n_0$ and every~$A\subset\torus$ with~$|A|=\Ceil{\mu n^d}$.
Let~\smash{$n\geqslant n_0$} and let us fix an initial configuration~$\eta:\torus\to\N$ such that~$|\eta|=\sum_{x\in\torus}\eta(x)=\Ceil{\mu n^d}$.
Writing~$M$ for the number of topplings necessary to stabilize, recalling that~$M_A$ denotes the number of topplings on the sites of~$A$, and writing~$A_{\text{final}}$ for the random set where the particles eventually settle, we can write, as in section~3.8 of~\cite{FG22}:
\begin{align*}
\calP^\lambda_\mu\Big(M\,\leqslant\,e^{a n^d}
\ \Big|\ \eta_0=\eta\Big)
&\ =\ \sum_{|A|=\lceil\mu n^d\rceil}
\calP^\lambda_\mu\Big(
\big\{M\,\leqslant\,e^{a n^d}\big\}
\cap
\big\{A_{\text{final}}=A\big\}
\ \Big|\ \eta_0=\eta
\Big)\\
&\ \leqslant\ \sum_{|A|=\lceil\mu n^d\rceil}
\calP^{\lambda,A}_\mu\Big(
M_A\,\leqslant\,e^{a n^d}
\ \Big|\ \eta_0=\eta
\Big)\\
&\ \leqslant\ \sum_{|A|=\lceil\mu n^d\rceil}
\calP^{\lambda,A}_\mu\Big(
M_{A}\,\leqslant\,e^{a n^d}
\ \Big|\ \eta_0=\mathbf{1}_A
\Big)
\ \leqslant\ \binom{n^d}{\lceil\mu n^d\rceil}
\,e^{-bn^d}
\,,
\end{align*}
where we used Lemma~5 of~\cite{FG22} in the first inequality,~Lemma~6 of~\cite{FG22} in the second inequality, and our assumption~\reff{conditionMA} in the last inequality.
Then, using the fact that
$$\binom{n^d}{\lceil\mu n^d\rceil}
\ =\ O\Big(e^{\psi(\mu)n^d}\Big)\,,$$
which follows from Stirling's formula, we deduce that
$$\sup_{\eta\in\N^{\torus}\,:\,|\eta|=\lceil\mu n^d\rceil}
\calP^\lambda_\mu\Big(M\,\leqslant\,e^{a n^d}
\ \Big|\ \eta_0=\eta\Big)
\ =\ O\Big(e^{-(b-\psi(\mu))n^d}\Big)\,,$$
implying that, taking~\smash{$0<c<\mathrm{min}\big(a,\,b-\psi(\mu)\big)$}, for~$n$ large enough, we have
$$\sup_{\eta\in\N^{\torus}\,:\,|\eta|=\lceil\mu n^d\rceil}
\calP^\lambda_\mu\Big(M\,\leqslant\,e^{c n^d}
\ \Big|\ \eta_0=\eta\Big)
\ <\ e^{-cn^d}\,,$$
which, by the monotonicity of the number of topplings with respect to the initial configuration, implies that
$$\calP^\lambda_\mu\Big(M\,\leqslant\,e^{c n^d}
\ \Big|\ |\eta_0|\geqslant\mu n^d\Big)
\ <\ e^{-cn^d}\,.$$
We then proceed as in sections~3.9 and~3.10 of~\cite{FG22}, dealing with the Poisson initial distribution and with the exponential toppling clocks, to deduce that, for every~$\mu'>\mu$, there exists~$c'>0$ such that the fixation time~$\calT_n$ of the continuous-time ARW model on~$\torus$ satisfies
$$\forall n\geqslant 1\qquad
\calP_{\mu'}^\lambda\Big(\calT_n\,\leqslant\,e^{c'n^d}\Big)
\ \leqslant\ e^{-c'n^d}\,,$$
which, by Theorem~4 of~\cite{FG22}, implies that~$\mu'\geqslant\mu_c(\lambda)$.
This being true for every~$\mu'>\mu$, we deduce that~$\mu\geqslant\mu_c(\lambda)$.
\end{proof}

We now translate this sufficient condition in the context of our loop representation, where some waking up events are ignored.

\begin{proof}[Proof of Lemma~\ref{lemmaConditionActivityLoops}]
Let~$d\geqslant 1$,~$\lambda>0$,~$\mu\in(0,1)$ and~$\kappa$ as in the statement.
Let us recall that our representation described in section~\ref{subsectionLoopRep}, which consists in storing a pile of loops above each vertex, differs from the classical site-wise representation of the ARW model in that the number of topplings necessary to stabilize is not \textit{abelian}: it depends on the order with which the sites are toppled, as explained in section~\ref{subsectionConditionLoops}.
Yet, it follows from the construction of our toppling strategy that the two representations can be coupled in the following way.

Let~$n$ large enough, let~$A\subset\torus$ with~\smash{$|A|=\Ceil{\mu n^d}$}, and let~$(A_j,\,\calC_j)_{j\leqslant {\calJ}}$ be a dormitory hierarchy on~$A$ and~$(f_C)_{C\in\calC_0}$ a collection of toppling procedures such that the assumption~\reff{goalStochDom} holds.

Draw a stack of toppling instructions~\smash{$\tau=(\tau(x,\,h))_{x\in \torus,\,h\in\N}$} distributed according to~$\calP^{\lambda,A}$, as defined in~\cite{FG22}, that is to say, for every~$x\in A$ and~$h\in\N$, the instruction~$\tau(x,\,h)$ is a sleep instruction with probability~$\lambda/(1+\lambda)$ and otherwise it is a jump instruction to one of the neighbours of~$x$ chosen uniformly, while if~$x\in \torus\setminus A$ and~$h\in\N$, the instruction~$\tau(x,\,h)$ cannot be a sleep and is a jump instruction to a uniform neighbour of~$x$, the instructions being independent.
Independently, for every~$x\in A$ and every~$\ell\in\N$, we draw an independent variable~$1+J(x,\,\ell)$ which is geometric with parameter~$1/2$.

Then, follow our toppling strategy to stabilize the set~$A_{\calJ}$ as described in section~\ref{subsectionStrategy}, but reading the instructions from~$\tau$ and the colours from~$J$, and storing the instructions used in a stack of sleep instructions~$I$ and a stack of loops~$\Gamma$, so that~$I(x,\,h)\in\{0,1\}$ indicates whether the~$h$-th step at~$x$ is a sleep or a loop while~$\Gamma(x,\,\ell,\,j)\subset\torus$ is the support of the~$\ell$-th loop produced at~$x$, where~$j=J(x,\,\ell)$ is the colour of this loop.
At the end of the stabilization of~$A_{\calJ}$, we complete the obtained stack~$I$ with~i.i.d.\ Bernoulli variables with parameter~\smash{$\lambda/(1+\lambda)$} and the stack~$\Gamma$ with independent supports of excursions.
Since the choice of the next site to topple using a toppling procedure (as defined in section~\ref{subsectionLoopRep}) only depends on the present configuration of the model (and not for example on the instructions which have not been used yet), the stacks~$(I,\,J,\,\Gamma)$ that we obtain have the distribution described in section~\ref{subsectionLoopRep}.
Thus, we obtain a coupling between, on the one hand, the site-wise representation of the ARW model, constructed with the stack of toppling instructions (jumps or sleeps) and with no sleeps out of~$A$, that we call henceforth the~$A$-ARW model, and our model constructed with the stacks of sleeps, of colours and of loops, in which we ignore some reactivation events depending on the colours of the loops.

Along this construction, one may execute the toppling instructions in the two models in parallel.
Recall that the two models do not only differ by the notation but also in the fact that, in our model with loops, we ignore some reactivations (while both models share the constraint that particles cannot sleep out of~$A$).
Thus, with the coupling described above, the two models do not follow the same evolution.
However, one can show that, at any time of the procedure, if a site of~$A$ is active in the loop model, then it is also active in the~$A$-ARW model.
Therefore, our toppling strategy with coloured loops yields a legal toppling sequence (as defined in~\cite{Rolla20}) in the~$A$-ARW model, meaning that, at the end of the stabilization of~$A_{\calJ}$, it might be that some sites of~$A_{\calJ}$ are not sleeping in the~$A$-ARW model, but at least we can guarantee that we only toppled sites which were active in both models.
In a certain sense, our model stabilizes faster because we ignore some reactivation events.

Thus, we have the stochastic domination~$M_A\succeq\calH(A_{\calJ})$.
Therefore, our assumption~\reff{goalStochDom} implies that~$M_A$ also dominates a geometric variable with parameter~$\exp(-\kappa n^d)$.
Now, to conclude by applying Lemma~\ref{lemmaConditionActivityTopplings}, there only remains to see that, if~$X$ is a geometric variable with parameter~$\exp(-\kappa n^d)$, then, choosing~$b$ such that~$\psi(\mu)<b<\kappa$ and~$a>0$ such that~\smash{$a<\kappa-b$}, we have
\begin{multline*}
\Proba\left(X\leqslant e^{an^d}\right)
\ =\ \Proba\left(X\leqslant\big\lfloor e^{an^d}\big\rfloor\right)
\ =\ 1-\big(1-e^{-\kappa n^d}\big)^{\lfloor e^{an^d}\rfloor}\\
\ =\ 1-\exp\left(\big\lfloor e^{an^d}\big\rfloor\ln\big(1-e^{-\kappa n^d}\big)\right)
\ \leqslant\ -\big\lfloor e^{an^d}\big\rfloor\ln\big(1-e^{-\kappa n^d}\big)
\ \eqninfty\ e^{-(\kappa-a)n^d}\,,
\end{multline*}
implying that, for~$n$ large enough, we have
$$\calP^\lambda_A\Big(\,M_A\,\leqslant\,e^{an^d}\,\Big)
\ \leqslant\ \PlA\Big(\,\calH(A_{\calJ})\,\leqslant\,e^{an^d}\,\Big)
\ \leqslant\ e^{-bn^d}\,,$$
which allows us to deduce that~$\mu\geqslant\mu_c(\lambda)$ by virtue of Lemma~\ref{lemmaConditionActivityTopplings}.
\end{proof}

\subsection{Correlation between loop colours: proof of Lemma~\ref{lemmaNextColour}}
\label{sectionProofLemmaNextColour}

\begin{proof}[Proof of Lemma~\ref{lemmaNextColour}]
Let~$d,\,n\geqslant 1$ and let~$A\subset\torus$ equipped with~$(A_j,\,\calC_j)_{j\leqslant \calJ}$ a dormitory hierarchy on~$A$ and toppling procedures~$(f_C)_{C\in\calC_0}$.

Let us describe an alternative construction of our model with exponential clocks behind the Bernoulli and geometric variables~$I(x,\,h)_{x,\,h}$ and~$J(x,\,\ell)_{x,\,\ell}$ defined in section~\ref{subsectionLoopRep}.
For every~$x\in A$ and every~$h\in\N$, we consider an exponential variable~$T_\s(x,\,h)$ with parameter~$p_\s=\lambda$ and, for every~$j\in\N$, an exponential variable~$T_j(x,\,h)$ with parameter~$p_j=2^{-(j+1)}$, all these exponential clocks being independent.
Then, for every~$x\in A$ and~$h\in\N$, writing
$$T_{\mathrm{min}}(x,\,h)
\ =\ \mathrm{min}\Big(T_\s(x,\,h),\ 
\inf_{j\in\N}\,T_j(x,\,h)\Big)\,,$$
we define, for~$x\in A$,~$h\in\N$ and~$\ell\in\N$,
$$I(x,\,h)
\ =\ \mathbf{1}_{\{T_{\mathrm{min}}(x,\,h)\,=\,T_\s(x,\,h)\}}
\qquadet
J(x,\,\ell)
\ =\ \mathbf{1}_{\{T_{\mathrm{min}}(x,\,h(\ell))\,=\,T_j(x,\,h(\ell))\}}\,,$$
where~$h(\ell)$ is the index of the~$\ell$-th zero in the sequence~$I(x,\,h)_{h\in\N}$, that is to say:
$$\left\{\begin{aligned}
&h(0)
\ =\ \min\{h\geqslant 0\,:\,I(x,\,h)=0\}\,,\\
&\forall\,\ell\geqslant 1\,,\ 
h(\ell)
\ =\ \mathrm{min}\big\{h>h(\ell-1)\,:\,I(x,\,h)=0\big\}\,.
\end{aligned}\right.$$

With~$I$ and~$J$ defined in this way, the~$h$-th toppling of the site~$x$ is a sleep if the clock~$T_\s(x,\,h)$ rings before all the clocks~\smash{$T_j(x,\,h),\,j\in\N$}, which happens with probability~$\lambda/(1+\lambda)$, and it is a loop of colour~$j$ if the first of these clocks to ring is~$T_j(x,\,h)$, which happens with probability~\smash{$1/(2^{j+1}(1+\lambda))$}.
Thus, the variables~$I(x,\,h)_{x,\,h}$ defined above are i.i.d.\ Bernoulli variables with parameter~$\lambda/(1+\lambda)$ and the variables~$J(x,\,\ell)_{x,\,\ell}$ are i.i.d.\ geometric variables with parameter~$1/2$, independent of~$I(x,\,h)_{x,\,h}$, that is to say, they are distributed as in the construction of our model described in section~\ref{subsectionLoopRep}.
Thus, to prove the result we may assume that the model is constructed with these exponential clocks.

Now let~$j\leqslant\calJ$ and~$C\in\calC_j$, and recall that the loops of colour~$j$ which are performed by the distinguished vertex~$\dist_C$ are not allowed to wake up anyone in~$C$: when~$\dist_C$ produces a loop of colour~$j$, the configuration inside~$C$ is not affected. 
Thus, since our toppling strategy to stabilize~$C$ only looks at the configuration inside~$C$ (recalling the definition of the toppling procedures in section~\ref{subsectionLoopRep}), this implies that, after a~$\dist_C$-loop of colour~$j$, this distinguished vertex~$\dist_C$ is toppled again, as if this loop had not occurred.

Now, call the~$\dist_C$-sleeps and the~$\dist_C$-loops of colour~$0,\,\ldots,\,j-1$ the {\it useful} topplings of~$\dist_C$,
and call {\it useless} topplings of~$\dist_C$ the~$\dist_C$-loops of colour at least~$j$, and let us write~\smash{$\calT=\calS(C)+\calL(C,\,0)+\cdots+\calL(C,\,j-1)$} for the total number of useful topplings performed by~$\dist_C$ during the stabilization of~$C$.
As the useless topplings do not impact the stabilization of~$C$, the sequence of useful topplings of~$\dist_C$ performed during the stabilization of~$C$ is independent of the useless topplings which are performed between any two useful topplings.
Thus, the variable~$\calT=\calS(C)+\calL(C,\,0)+\cdots+\calL(C,\,j-1)$ is independent of the sequence of exponential clocks~\smash{$\big(T_j(\dist_C,\,h)\big)_{h\in\N}$}.

Let us now define inductively~$h_0=0$ and, for every~$i\geqslant 1$,
$$h_{i}
\ =\ \min\Big\{h>h_{i-1}\ :\ 
T_{\mathrm{min}}(\dist_C,\,h)\,\neq\,T_k(\dist_C,\,h)
\text{ for every }k\geqslant j\Big\}\,,$$
so that for every~$i\in\{1,\,\ldots,\,\calT\}$,~$h_i$ is the number of useless topplings of~$\dist_C$ which are produced between the~$(i-1)$-th and the~$i$-th useful toppling of~$\dist_C$.
Let us write~$X_i-1$ for the number of loops of colour~$j$ among these useless topplings.
Namely, for every~$i\geqslant 1$, we write
$$X_i
\ =\ 1+\abs{\Big\{h\in\{h_{i-1}+1,\,\ldots,\,h_i-1\}\ :\ 
T_{\mathrm{min}}(\dist_C,\,h)\,=\,T_j(\dist_C,\,h)\Big\}}\,.$$
With these variables, we have the relation~\reff{sumGeomLoops}, the variables~$(X_i)_{i\geqslant 1}$ are i.i.d., and~$X_1-1$ is distributed as the number of times that a Poisson point process with intensity~$p_j$ fires before an exponential clock of parameter~$p_\s+p_0+\cdots+p_{j-1}$ rings.
Thus,~$X_1$ is a geometric variable with parameter given by the computation~\reff{computationParamXi}.
Also, it follows from the above considerations that~$\calT$ is independent of~$(X_i)_{i\geqslant 1}$, since the loops of colour~$j$ are irrelevant for the stabilization of~$C$.
Therefore, the variables~$X_i$ are independent of~$\calT$, and the result follows.
\end{proof}

\subsection{An elementary property of geometric sums: proof of Lemma~\ref{lemmaSumGeometrics}}
\label{sectionProofLemmaSumGeometrics}

\begin{proof}[Proof of Lemma~\ref{lemmaSumGeometrics}]
Consider the following heads and tails experiment made with two coins,
a big one and a small one, which give head with probability $a$ and $b$, respectively.

Toss the small coin up to getting a head, then toss the big coin.
If the big coin gives head, then stop there.
Otherwise, if the big coin gives tail, restart the experiment: toss the small coin again until getting head, then toss the big coin, stop if it gives head, and so on.

Let~$N$ be the total number of times that the big coin is tossed during the experiment.
The law of~$N$ is geometric with parameter~$a$.
Hence, the law of the total number of small tails is a sum of~$N$ independent geometrics minus one, each with parameter~$b$.

The experiment may be seen as a sequence of independent patterns, the three possible patterns being: a small tail ; a small head followed by a big tail ; a small head followed by a big head.
These three patterns occur with probability~$1-b$,~$b(1-a)$ and~$ba$, respectively.
Then, the total number of small tails is the number of occurrences of the first pattern before obtaining the third pattern (so that occurrences of the second pattern may be ignored).
Hence, this number is a geometric minus one, with parameter~$ba/(1-b+ba)$.
\end{proof}

\subsection{Hitting probabilities on the torus: proof of Lemma~\ref{lemmaHarnack}}
\label{sectionProofLemmaHarnack}

\begin{proof}
The case~$d=1$ is simply the gambler's ruin estimate, see for example Proposition~5.1.1 of~\cite{LL10}.
Assume now that~$d\geqslant 2$ (in fact the proof below also works in dimension~$1$, but it does not give the explicit constant~$1/2$).

The key ingredient of our proof is Harnack's principle, as stated in Theorem~6.3.9 of~\cite{LL10}.
Let us consider the open set~$U$ and the compact set~$K$ given by
$$U\ =\ \big(-1,\,1\big)^d
\setminus \left[-\frac{1}{2},\,\frac{1}{2}\right]^d
\quadet
K\ =\ \left[-\frac{5}{6},\,\frac{5}{6}\right]^d
\setminus \left(-\frac{2}{3},\,\frac{2}{3}\right)^d\,.$$
Harnack's principle tells us that there exists a constant~$C>0$ and an integer~$r_0\geqslant 1$ such that, for every~$r\geqslant r_0$, writing~$U_r=r\,U\cap\Z^d$ and~$K_r=r\,K\cap\Z^d$, for every function~\smash{$f:\overline{U_r}\to[0,\,\infty)$} which is harmonic on~$U_r$, we have~$f(x)\geqslant C f(y)$ for every~$x,\,y\in K_r$ (where~$\overline{U_r}$ denotes the set of vertices of~$\Z^d$ which are in~$U_r$ or have at least one neighbour in~$U_r$).

For every~$r\geqslant 1$, let us write
$$u_r\ =\ P_0\big(T_{\partial\Lambda_r}<T_0^+\big)\,,$$
which is the probability that a simple symmetric random walk on~$\Z^d$ started at the origin exits from the box~$\Lambda_r=\{-r,\,\ldots,\,r\}$ before returning to the origin.

We now let~$n\geqslant 1$ and~$x,\,y\in\torus$ with~$x\neq y$, and let~$r=d(x,\,y)$.
Recall that we work with the \guillemets{infinite-norm distance} on the torus, defined by~\reff{defDistance}.
By definition of this distance, writing~\smash{$\pi_n:\Z^d\to\torus$} for a standard projection, we may take~$a,\,b\in\Z^d$ with~$\norme{a-b}_\infty=r$ such that~$\pi_n(a)=x$ and~$\pi_n(b)=y$.
Note also that we always have~$r\leqslant n/2$.

Upon decreasing the constant~$K$ which appears in the result of the Lemma, we may assume that~$r\geqslant 6\vee r_0$ and that~$a-b$ has all its coordinates even, so that~$m=(a+b)/2\in\Z^d$.

We then consider the function
$$f\,:\,z\in\overline{U_r}\ \longmapsto\ 
P_{\pi_n(m+z)}\big(T_y<T_x\big)\,,$$
where the random walk considered is on the torus~$\torus$.
This function~$f$ is harmonic on~$U_r$ because
$$(m+U_r)
\,\cap\,\big[(a+n\Z^d)\cup(b+n\Z^d)\big]
\ =\ \varnothing\,.$$
Therefore, Harnack's principle ensures that~$\inf f(K_r)\geqslant C\,\sup f(K_r)$ (note that~\smash{$K_r\neq\varnothing$} because we assumed that~$r\geqslant 6$).
Yet, since the two points~$x$ and~$y$ play symmetric roles and since~$K_r$ is symmetric, we have~$f(-z)=1-f(z)$ for every~$z\in K_r$, whence~\smash{$\sup f(K_r)\geqslant 1/2$}.
Thus, we deduce that~$\inf f(K_r)\geqslant C/2$.

The result then follows by writing
$$P_x\big(T_y<T_x^+\big)
\ \geqslant\ P_x\big(T_{\pi_n(m+K_r)}<T_x^+\big)\times
\inf f(K_r)
\ \geqslant\ \frac{C}{2}\,P_a\big(T_{m+K_r}<T_a^+\big)
\ \geqslant\ \frac{C u_{2r}}{2}$$
and by using the classical lower bounds on~$u_r$ (see for example section~4.6 of~\cite{LL10}).
\end{proof}

\subsection{Pairings in graphs: proof of Lemma~\ref{lemmaGroup2or3}}
\label{sectionProofLemmaGroup2or3}

Lemma~\ref{lemmaGroup2or3} is a corollary of the following Lemma:

\begin{lemma}
For every finite connected undirected graph~$G=(V,\,E)$ with~$|V|\geqslant 2$, there exists a partition of~$V$ into sets of cardinality~$2$ or~$3$ and diameter (for the graph distance on~$G$) at most~$2$.
\end{lemma}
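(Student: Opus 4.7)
The plan is to reduce to the case where $G$ is a tree and then argue by induction on $|V|$. Since $G$ is connected, it admits a spanning tree $T$, and any partition of $V$ into $2$- or $3$-element sets whose $T$-diameter is at most $2$ is automatically valid for $G$, because distances in $T$ dominate those in $G$. I would establish the base cases $|V|=2$ (take the single edge of $T$ as a pair) and $|V|=3$ (a connected tree on three vertices is a path of length~$2$, so $V$ itself is a valid triple of diameter $2$).

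For the inductive step with $|V|\geqslant 4$, I would root $T$ at an arbitrary vertex, pick a leaf $v$ at maximum depth, and let $u$ be its parent. By maximality of the depth, every child of $u$ is itself a leaf, and two cases arise. If $u$ has at least two children, I pick a sibling $v'$ of $v$ and form the pair $\{v,v'\}$, which has $T$-diameter $2$ via $u$; since $v$ and $v'$ are both leaves of $T$, the graph $T\setminus\{v,v'\}$ remains a tree on $|V|-2\geqslant 2$ vertices, and the induction hypothesis applies. If instead $u$ has $v$ as its only child, then $u$ cannot be the root (else $|V|=2$, contradicting $|V|\geqslant 4$) and so has a parent $w$; I form the pair $\{u,v\}$ of diameter $1$. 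In $T\setminus\{v\}$ the vertex $u$ has $w$ as its only remaining neighbour and is thus a leaf, so $T\setminus\{u,v\}$ is still a tree on $|V|-2\geqslant 2$ vertices, and the induction hypothesis applies.

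The main obstacle I expect to navigate is ensuring that the residual structure after removing a small set remains amenable to induction: a naive removal, for instance using a triple $\{v,u,w\}$ in the second case, can disconnect the tree and leave isolated vertices that no $2$- or $3$-element set can absorb. The choice above sidesteps this by always removing either actual leaves of $T$ or a vertex that becomes a leaf after a single prior removal, so the residue is always a connected tree. One observation that emerges from this construction is that triples arise only via the $|V|=3$ base case, which is reached precisely when $|V|$ is odd.
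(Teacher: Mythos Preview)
Your proof is correct and follows essentially the same approach as the paper: induction on $|V|$ via a spanning tree, removing a deepest leaf together with either a sibling (if one exists) or its parent (otherwise), and applying the induction hypothesis to the remaining connected tree. The only cosmetic difference is that you reduce to trees at the outset and carry out the induction there, whereas the paper keeps the ambient graph $G$ and uses the spanning tree only to locate the vertices to remove; the two arguments are otherwise identical.
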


\begin{proof}
We proceed by induction on~$|V|$.
The result is obvious if~$|V|=2$ or~$|V|=3$.
We now let~$n\geqslant 4$ and we assume that the result is true for all the connected undirected graphs containing between~$2$ and~$n-1$ vertices.
Let~$G=(V,\,E)$ be a finite connected undirected graph with~$|V|=n$.

The graph~$G$ being connected, we can consider a rooted covering tree of~$G$.
Let~$x\in V$ be a leaf of this covering tree with maximal distance to the root.

If this leaf has a \guillemets{sister}~$y$ (i.e., another leaf which has the same parent), then the induced subgraph on~$V\setminus\{x,\,y\}$ is still connected.
Applying the induction hypothesis to this subgraph yields a partition~$\Pi$ of~$V\setminus\{x,\,y\}$, and then~\smash{$\Pi\cup\big\{\{x,\,y\}\big\}$} is a solution of the problem, since the graph distance on~$G$ between~$x$ and~$y$ is at most~$2$ (because it equals~$2$ on the covering tree).

Otherwise, if the leaf~$x$ has no sister, then we consider its parent~$z$, and, noticing that the induced subgraph on~$V\setminus\{x,\,z\}$ remains connected, we can apply the induction hypothesis to this subgraph.
\end{proof}

\subsection{Sums of dependent Bernoulli variables: proof of Lemma~\ref{lemmaSumBernoulli}}
\label{sectionProofLemmaSumBernoulli}

\begin{proof}
Let~$p\in[0,1]$ and~$c>0$.
We proceed by induction on~$n$.
The result is obvious if~$n=0$.
Assume that the result is valid for~$n\in\N$, let~$X_1,\,\ldots,\,X_{n+1}$ be Bernoulli random variables with parameter~$p$, and let us write~$Y_n=e^{-c(X_1+\cdots+X_n)}$.
Conditioning on the last variable, we have
\begin{align*}
\Esp\Big[&e^{-c(X_1+\cdots+X_{n+1})}\Big]
\ =\ pe^{-c}\,
\Esp\big[Y_n\,\big|\,X_{n+1}=1\big]
+(1-p)\,
\Esp\big[Y_n\,\big|\,X_{n+1}=0\big]\\
&\ =\ p\,
\Esp\big[Y_n\,\big|\,X_{n+1}=1\big]
+(1-p)\,
\Esp\big[Y_n\,\big|\,X_{n+1}=0\big]
-p\big(1-e^{-c}\big)\,\Esp\big[Y_n\,\big|\,X_{n+1}=1\big]\\
&\ =\ \Esp\big[Y_n\big]
-p\big(1-e^{-c}\big)\Esp\big[Y_n\,\big|\,X_{n+1}=1\big]\\
&\ \leqslant\ \Esp\big[Y_n\big]
-p\big(1-e^{-c}\big)e^{-cn}
\end{align*}
Plugging the induction hypothesis in the above formula then yields the desired result.
\end{proof}

\end{appendix}
\begin{acks}[Acknowledgments]
AG and NF thank Vittoria Silvestri for useful discussions
at the Ypatia 2022 conference. They thank the IRL Lysm for support
at this occasion.
\end{acks}

\begin{funding}
NF thanks the German Research Foundation (project number 444084038,
priority program SPP 2265) for financial support.
AA's research was supported by public grants overseen by the
French National Research Agency, ANR SWiWS (ANR-17-CE40-0032-02).
\end{funding}

\bibliographystyle{imsart-number}
\bibliography{arw.bib}
\end{document}